\documentclass[a4paper, DIV=12, 11pt]{amsart}
\usepackage[latin1]{inputenc}
\usepackage{amssymb, amsthm, amsmath, thmtools, mathtools}
\usepackage{amsfonts}
\usepackage[abbrev]{amsrefs} 

\usepackage{graphicx}
\usepackage{thmtools}
\usepackage{hyperref}
\usepackage[bottom, marginal]{footmisc}
\usepackage{todonotes}

\declaretheoremstyle[headfont=\normalfont]{normalhead}
\newtheorem{lemma}{Lemma}[section]
\newtheorem{theorem}[lemma]{Theorem}
\newtheorem{proposition}[lemma]{Proposition}
\newtheorem{corollary}[lemma]{Corollary}
\newtheorem{definition}[lemma]{Definition}
\newtheorem{remark}[lemma]{Remark}

\newcounter{mt}

\newtheorem{maintheorem}[mt]{Theorem}

\newtheorem*{acknowledgement}{Acknowledgement}

\newcommand{\R}{\mathbb{R}}
\newcommand{\C}{\mathbb{C}}
\newcommand{\Q}{\mathbb{Q}}

\DeclareMathOperator{\sign}{sign}

\DeclareMathOperator{\vol}{vol}

\DeclareMathOperator{\diam}{diam}

\DeclareMathOperator{\GL}{GL}

\DeclareMathOperator{\SO}{\mathrm{SO}}

\newcommand{\calP}{\mathcal{P}}
\newcommand{\calU}{\mathcal{U}}
\newcommand{\conv}{\mathrm{conv}}

\numberwithin{equation}{section}

\author{Jonas Knoerr}
\title{Rigid motion invariant valuations on polytopes}
\date{}

\newcommand{\Addresses}{{
		\bigskip
		\footnotesize
		
		Jonas Knoerr, \textsc{Institute of Discrete Mathematics and Geometry, TU Wien, Wiedner Hauptstrasse 8-10, 1040 Wien, Austria}\par\nopagebreak
		\textit{E-mail address}: \texttt{jonas.knoerr@tuwien.ac.at}
		
		\medskip
	}}
	
\makeatletter
\def\blfootnote{\xdef\@thefnmark{}\@footnotetext}
\makeatother

\makeindex
\begin{document}
\maketitle
\begin{abstract}
	We show that any measurable, translation and $\SO(n)$-invariant valuation on polytopes in $\R^n$ is a linear combination of the intrinsic volumes, which extends Hadwiger's classical characterization of rigid motion invariant continuous valuations on convex bodies. This result is based on a novel regularity result for translation invariant, measurable, $1$-homogeneous, and simple valuations. As a consequence, we obtain a similar characterization of the Steiner point map on polytopes.
\end{abstract}
\blfootnote{2020 \emph{Mathematics Subject Classification}. 52B45, 52B12, 52A39.\\
	\emph{Key words and phrases}. measurable valuation, polytopes, intrinsic volumes.\\}

\section{Introduction}

Given a family of sets $\mathcal{S}$, a map $\varphi:\mathcal{S}\rightarrow (\mathcal{A},+)$ into an abelian semi-group is called a valuation if
\begin{align*}
	\varphi(K)+\varphi(L)=\varphi(K\cup L)+\varphi(K\cap L)
\end{align*}
for all $K,L\in\mathcal{S}$ such that $K\cup L, K\cap L\in\mathcal{S}$. This notion goes back to Dehn's solution of Hilbert's Third Problem on the possibility of an elementary definition of volume. Dehn showed this to be impossible by constructing a rigid motion invariant valuation on polytopes that vanishes on lower dimensional sets and which is not a multiple of the Lebesgue measure.\\
The situation is drastically different if additional regularity assumptions are imposed. The investigation of continuous valuations on the space $\mathcal{K}^n$ of convex bodies, that is, the set of all non-empty compact and convex subsets of $\R^n$ equipped with the Hausdorff metric, has been a very active area of mathematics. We refer to \cite{AleskerFaifmanConvexvaluationsinvariant2014,AleskerDescriptioncontinuousisometry1999,BernigEtAlHardLefschetztheorem2024,BoeroeczkyLudwigMinkowskivaluationslattice2019,FaifmanHofstaetterConvexvaluationsWhitney2025,FreyerEtAlUnimodularvaluationsEhrhart2025,LudwigEllipsoidsmatrixvalued2003,LudwigReitznerclassification$SLn$invariant2010,SchusterWannerer$GLn$contravariantMinkowski2012,SchusterWannererMinkowskivaluationsgeneralized2018,KnoerrSmoothvaluationsconvex2025,SchneiderSimplevaluationsconvex1996} for some of these developments for valuations on convex bodies and polytopes.\\

The most famous classical result is the following characterization of the intrinsic volumes $V_0,\dots,V_n$ due to Hadwiger.
\begin{theorem}[\cite{HadwigerVorlesungenuberInhalt1957}]\label{theorem:HadwigerConvexBodies}
	Let $\varphi:\mathcal{K}^n\rightarrow\R$ be a continuous, translation and $\SO(n)$-invariant valuation. Then there exist $c_0,\dots,c_n\in\R$ such that
	\begin{align*}
		\varphi=\sum_{k=0}^nc_k V_k.
	\end{align*}
\end{theorem}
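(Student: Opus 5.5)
We prove Hadwiger's theorem by induction on the dimension $n$. The standard route is Klain's simplification: first establish the \emph{Klain--Schneider characterization of simple valuations}, namely that a continuous, translation invariant valuation on $\mathcal{K}^n$ which vanishes on all convex bodies of dimension less than $n$ (a simple valuation) and which is additionally even is a multiple of volume. Rigid motion invariance includes invariance under all rotations. In dimension $n\ge 2$, every reflection composed with a suitable rotation acts on a given body like $K\mapsto -K$ up to translation when restricted to suitable building blocks. This lets us reduce to the even case, or alternatively work directly with $\SO(n)$-invariance, which forces evenness on simple valuations via a rotation by $\pi$ in a plane containing the relevant direction.

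The induction runs as follows. The case $n=1$ is elementary: a continuous translation invariant valuation on segments is determined by its values on $\{0\}$ and on $[0,t]$. Additivity of $t\mapsto\varphi([0,t])-\varphi(\{0\})$ under concatenation, together with continuity, makes this function linear in $t$. For the inductive step, take $\varphi$ on $\mathcal{K}^n$ and restrict it to a hyperplane $H\cong\R^{n-1}$. The restriction is continuous, translation invariant and $\SO(n-1)$-invariant, so by the induction hypothesis $\varphi|_H=\sum_{k=0}^{n-1}c_kV_k$. Here we use that the intrinsic volumes do not depend on the ambient dimension. By rotation invariance the coefficients are the same for every hyperplane. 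Then $\psi=\varphi-\sum_{k=0}^{n-1}c_kV_k$ is a continuous, rigid motion invariant, simple valuation on $\mathcal{K}^n$.

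It remains to show that $\psi=c_nV_n$; this is the main obstacle. I would follow Klain's argument by induction, proving that a simple, even, continuous, translation invariant valuation is a multiple of volume. Cut a right cylinder $K\times[0,t]$ along a slanted hyperplane, reassemble the pieces by a translation and a reflection, and use simplicity and evenness. This shows that $\psi$ agrees on a slanted cylinder and a right cylinder with the same base and height. From there, $\psi$ restricted to products with a fixed base $K\subset H$ is additive and continuous in the height, hence linear. This gives a valuation $K\mapsto\psi(K\times[0,1])$ on $H$ which is simple and even, and by the inner induction it is $c\,\vol_{n-1}$. Therefore $\psi$ equals $c$ times volume on right cylinders. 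Since $\psi-cV_n$ is simple and vanishes on cylinders, it vanishes on finite unions of cylinders, and hence on all polytopes by approximation. Continuity then extends this to all of $\mathcal{K}^n$.

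The delicate points are two. The first is verifying evenness of $\psi$: for $\SO(n)$-invariant simple valuations, I would show that $\psi(K)-\psi(-K)$ is an odd simple valuation vanishing on simplices, by decomposing a simplex and using the Schneider-type argument. The second is the approximation of arbitrary convex bodies by unions of cylinders, where inclusion--exclusion for simple valuations controls the lower-dimensional overlaps.
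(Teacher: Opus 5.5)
\textbf{There is a genuine gap, in the last step.} Your reduction to a simple, rigid motion invariant $\psi$ is fine. It is the same first step as in the paper's proof of \autoref{maintheorem:HadwigerPolytopes}; the paper does not prove \autoref{theorem:HadwigerConvexBodies} itself but recovers it from that result by restricting to $\calP^n$ and using density. Your evaluation of $\psi$ on right and slanted prisms is also fine. Cutting and translating suffices there, so evenness is not needed at that point.

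The gap is the step ``$\psi-cV_n$ is simple and vanishes on cylinders, hence on finite unions of cylinders, and hence on all polytopes by approximation.'' This is where the whole content of the theorem lies, and approximation cannot supply it.
\begin{itemize}
\item Continuity of $\psi$ refers only to Hausdorff convergence inside $\mathcal{K}^n$. The inclusion--exclusion extension of $\psi$ to non-convex finite unions is not continuous, so staircase approximations of a simplex by unions of prisms tell you nothing about the simplex.
\item Convex polytopes that are dissected into prisms cannot approximate a simplex; in the plane they are centrally symmetric.
\end{itemize}
In fact the implication you use is false. For $n=2$ and a continuous odd function $f$ on $S^1$, set
\[
\mu(K)=\int_{S^1}f(u)\,dS_1(K,u).
\]
This $\mu$ is continuous, translation invariant and simple. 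It vanishes on every parallelogram, since opposite edges have opposite normals and equal length, and hence on every finite union of prisms. Now let $T$ be a triangle with outer edge normals $u_1,u_2,u_3$, and choose $f$ odd with $f(u_1)=1$ and $f(u_2)=f(u_3)=0$. Then $\mu(T)=\vol_1(F(T,u_1))\neq 0$. Your last step uses only simplicity, translation invariance, continuity and vanishing on prisms, so it would also ``prove'' $\mu=0$.

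\textbf{What is missing} is an \emph{exact} identity relating simplices to prisms and other controlled pieces, in which evenness or rotation invariance is actually used. In the plane this is $[0,a]+[0,b]=T\sqcup(-T+a+b)$ with $T=[0,a,b]$, which gives $0=\psi(T)+\psi(-T)=2\psi(T)$ for even $\psi$. In higher dimensions the step is substantially harder; Klain's proof relies on generalized zonoids and the cosine transform.

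\textbf{How the paper handles this step.} It avoids evenness altogether.
\begin{itemize}
\item It splits $\psi$ into homogeneous components.
\item For degrees $2\le k\le n-1$, it shows by induction that the component vanishes on orthogonal direct sums. The Canonical Simplex Decomposition (\autoref{theorem:CanonicalSimplexDecomp}) of an orthogonal simplex then gives $\psi_k(\Delta)=((1-t)^k+t^k)\psi_k(\Delta)$, so $\psi_k(\Delta)=0$.
\item It passes to all polytopes through the exact identity of \autoref{lemma:GeneralizedDecompOrthSimplices}, not by approximation.
\item Your counterexample is homogeneous of degree $1$, which is exactly the degree where the identity above is empty. For that degree the paper uses $\SO(n)$-invariance through the averaged Stiefel-manifold representation (\autoref{theorem:Equiv1homValuation}, \autoref{prop:simple1homHadwiger}).
\end{itemize}

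\textbf{Your evenness argument is also incomplete.} For odd $n$, $-\mathrm{Id}\notin\SO(n)$. A half-turn in a $2$-plane does not map a general body to a translate of $-K$. Evenness of a simple $\SO(n)$-invariant valuation needs a separate argument. One option is to dissect each simplex into pieces that are pairwise mirror images under hyperplane reflections, so that a simplex and its mirror image are equidecomposable by proper rigid motions.
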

This result implies in particular that the only  valuation in this class that vanishes on lower dimensional convex bodies is the volume $V_n$.\\

In addition to Hadwiger's original proof (and the streamlined version given by Chen \cite{Chensimplifiedelementaryproof2004}), Klain obtained a second proof of this result using a more analytic approach based on generalized zonoids and properties of the cosine transform \cite{KlainshortproofHadwigers1995}. His proof provides a characterization of the volume as the unique continuous and translation invariant even valuation (i.e. $\varphi(-K)=\varphi(K)$) on convex bodies vanishing on lower dimensional sets, a characterization which is of fundamental importance in modern valuation theory and is one of the key ingredients in Alesker's proof of the so-called Irreducibility Theorem (see \cite{AleskerDescriptiontranslationinvariant2001}, and \cite{HofstaetterKnoerrLocalizationvaluationsAleskers2025} for an alternative approach). A key outcome of these developments is Alesker's discovery that Hadwiger-type finiteness results hold for all compact subgroups of $\GL(n,\R)$ that operate transitively on a the unit sphere \cite{AleskerP.McMullensconjecture2000}. These groups are explicitly known, see \cite{MontgomerySamelsonTransformationgroupsspheres1943,BorelSomeremarksLie1949}, and there has been a substantial effort in characterizing the corresponding spaces of continuous valuations \cite{BernigHadwigertypetheorem2009,BernigInvariantvaluationsquaternionic2012,BernigIntegralgeometry$G_2$2011,BernigSolanesKinematicformulasquaternionic2017,AleskerHardLefschetztheorem2003,KotrbatyWannererIntegralgeometryoctonionic2025}, which has led to immense advances in integral geometry \cite{FuStructureunitaryvaluation2006,BernigFuHermitianintegralgeometry2011,BernigHugKinematicformulastensor2018,WannererIntegralgeometryunitary2014,Wannerermoduleunitarilyinvariant2014}.

In contrast, Hadwiger's proof relies almost entirely on decompositions of polytopes, but requires that the valuation is defined on the larger space of all convex bodies at a crucial step. It has therefore been a long standing question whether his characterization holds when restricted to the subspace $\mathcal{P}^n\subset \mathcal{K}^n$ of convex polytopes, compare \cite{McMullenSchneiderValuationsconvexbodies1983}*{Problem 15.4}. In this article, we provide an affirmative answer to this question, with an even weaker regularity assumption: We will only require that $\varphi:\calP^n\rightarrow\R$ is measurable, i.e. that the preimage of any open subset of $\R$ under $\varphi$ is a Borel set with respect to the topology induced by the Hausdorff metric. 
\begin{maintheorem}
	\label{maintheorem:HadwigerPolytopes}
	Let $\varphi:\calP^n\rightarrow\R$ be a measurable, translation and $\SO(n)$-invariant valuation. Then there exist $c_0,\dots,c_n\in\R$ such that
	\begin{align*}
		\varphi=\sum_{k=0}^nc_k V_k.
	\end{align*}
\end{maintheorem}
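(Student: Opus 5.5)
We follow Hadwiger's inductive scheme, using the regularity result for simple $1$-homogeneous valuations mentioned in the abstract at the one point where Hadwiger needs continuity on all of $\mathcal{K}^n$. The induction is on the dimension $n$. For $n=1$, a translation invariant valuation on segments satisfies $\varphi([0,a+b])+\varphi(\{a\})=\varphi([0,a])+\varphi([a,a+b])$. Hence $a\mapsto\varphi([0,a])-\varphi(\{0\})$ is additive on $(0,\infty)$. It is also measurable, so by the classical Banach--Sierpi\'nski theorem on measurable additive functions it is linear, and $\varphi=c_0V_0+c_1V_1$.

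For the inductive step, restrict $\varphi$ to polytopes lying in a hyperplane $H$. This restriction is measurable, translation invariant and $\SO(n-1)$-invariant (we may use $\SO(n)$ to realise reflections inside $H$ when $n\ge 3$). By induction and the intrinsic nature of the $V_k$, it equals $\sum_{k<n}c_kV_k$ with constants independent of $H$. Then $\psi=\varphi-\sum_{k<n}c_kV_k$ is a measurable, translation and $\SO(n)$-invariant \emph{simple} valuation on $\calP^n$, i.e. it vanishes on lower dimensional polytopes. It remains to show that $\psi=c_nV_n$.

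The standard reduction uses McMullen's decomposition for translation invariant valuations on polytopes: a simple translation invariant valuation splits into homogeneous components $\psi=\psi_{n-1}+\psi_n$ of degrees $n-1$ and $n$. This follows by applying $\psi(\lambda P)$ for rational $\lambda$ and a Hadwiger/Sah type argument. Measurability should extend the homogeneity from rational to all positive $\lambda$, using the measurability of $\lambda\mapsto\psi(\lambda P)$ and again Cauchy-type arguments. For the degree $n$ part, a simple $n$-homogeneous translation invariant valuation is a multiple of volume on cubes, and hence on all boxes and their dissections. Extending this to arbitrary polytopes is the Dehn-invariant issue. Here I would argue that $\psi_n-cV_n$ is simple, of degree $n$ and measurable, and then apply the regularity machinery to show it is continuous. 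For continuous simple $n$-homogeneous valuations, $\psi_n=cV_n$ follows as in Hadwiger/Klain.

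For the degree $n-1$ part, a simple translation invariant valuation is given by $\psi_{n-1}(P)=\sum_F f(u_F)V_{n-1}(F)$ over facets $F$ with unit normals $u_F$, where $f$ is odd plus linear. $\SO(n)$-invariance forces $f$ to be constant, and since $f$ has vanishing even part modulo linear functions, the constant contribution vanishes by the Minkowski relation $\sum_F u_FV_{n-1}(F)=0$. The genuine difficulty is degree $1$ in the induction on lower dimensions, i.e. components of degree $1$ appearing when $n=2$ or in sub-cases. This is where Hadwiger's proof uses continuity on all convex bodies, through approximation by zonotopes. I would instead invoke the paper's new regularity result: a translation invariant, measurable, $1$-homogeneous, simple valuation is automatically continuous, and in fact of the form $P\mapsto\sum_F f(u_F)V_{n-1}(F)$ with $f$ linear, so it vanishes identically. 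Combined with rotation invariance and the odd/linear structure, this kills all remaining components.

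\textbf{Main obstacle.} The main obstacle is establishing that measurability of $\psi$ forces the Cauchy-type functional equations along the rotation and dilation directions to have only regular solutions. Concretely, it must rule out Dehn-type pathological invariants. Those invariants exist precisely because they are non-measurable, being built from Hamel bases of $\R$ over $\Q$. I would first try Steinhaus's theorem, which gives that a measurable additive function bounded on a set of positive measure is linear, applied to the maps $\theta\mapsto\psi(P_\theta)$ for families of polytopes with dihedral angle parameter $\theta$.
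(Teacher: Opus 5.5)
Your base case and your reduction to a simple valuation $\psi=\varphi-\sum_{k<n}c_kV_k$ are correct and match the paper. The core step, showing that such a $\psi$ equals $c_nV_n$, has a genuine gap. The splitting $\psi=\psi_{n-1}+\psi_n$ you attribute to McMullen is false on $\calP^n$. It is a theorem about \emph{continuous} simple valuations on $\mathcal{K}^n$ (Klain, Schneider), and its proof uses continuity on all convex bodies. On polytopes, simple translation invariant valuations can have nonzero components in intermediate degrees. For instance, Dehn's invariants $P\mapsto\sum_e\vol_1(e)\,h(\theta_e)$ on $\calP^3$ (sum over edges, $\theta_e$ the dihedral angle, $h$ $\Q$-linear with $h(\pi)=0$) are simple, rigid motion invariant and homogeneous of degree $1$. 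So the Dehn phenomenon lives in the intermediate degrees, not in degree $n$ as you suggest. Once measurability gives real homogeneity and hence weak continuity (\autoref{proposition:homDecompMeasuBounded}, \autoref{corollary:measurableImpliesWeaklyCont}), the top component is $c_nV_n$ by \autoref{theorem:HadwigerTopDegree}, with no use of rotations. As written, your argument never treats the components of degree $1\le k\le n-2$ for $n\ge 3$, and that is where all the work lies.

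The degree-$1$ result you invoke is also misquoted. The paper does not show that measurable, simple, $1$-homogeneous valuations are continuous. Moreover, a facet sum $\sum_F f(u_F)V_{n-1}(F)$ has degree $n-1$, not $1$, when $n\ge 3$. What \autoref{theorem:Equiv1homValuation} actually does depends essentially on $\SO(n)$-equivariance:
\begin{enumerate}
\item It starts from Hadwiger's flag representation $\varphi(P)=\sum_{u\in\calU^{n-1}_n(P)}f(u)\vol_1(F(P,u))$, where $f(u)=\pm\varphi(Q_{u_0}(u))$ is read off from orthogonal simplices (\autoref{theorem:1HomSimple}), so $f$ inherits measurability.
\item A measure-theoretic argument combined with rotating $P$ shows that $f$ is bounded.
\item Averaging over $\SO(n)$ replaces $f$ by a continuous, odd, $\SO(n)$-invariant function, which is constant and hence zero (\autoref{prop:simple1homHadwiger}).
\end{enumerate}
The degrees $2\le k\le n-1$ need a separate induction (\autoref{proposition:simpleValuationsVanish}). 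Polarization shows that $\psi_k$ vanishes on orthogonal Minkowski sums. The Canonical Simplex Decomposition of an orthogonal simplex $\Delta$ then gives $\psi_k(\Delta)=((1-t)^k+t^k)\psi_k(\Delta)$, so $\psi_k(\Delta)=0$, and \autoref{lemma:GeneralizedDecompOrthSimplices} extends this to all polytopes. Your Steinhaus idea for $\theta\mapsto\psi(P_\theta)$ cannot replace these steps. A general simple valuation satisfies no additive equation in a dihedral angle until you already have a flag representation of it, which is exactly the content of \autoref{theorem:1HomSimple}.
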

Let us remark that this case covers both continuous and monotone valuations on $\calP^n$, since a monotone and translation invariant valuation on polytopes is automatically continuous by a result due to McMullen \cite{McMullenValuationsEulertype1977}, and so in particular measurable. However, \cite{McMullenSchneiderValuationsconvexbodies1983}*{Problem 15.4} poses the same question for several other interesting cases that are not covered by the previous result. In particular, the corresponding question for locally bounded or non-negative valuations is still open. Let us also mention that for valuations on polytopes that are invariant under the special linear group, complete characterizations without any regularity assumptions have been established by Ludwig and Reitzner \cite{LudwigReitzner$SLn$invariantvaluations2017}.\\

As in the proofs of \autoref{theorem:HadwigerConvexBodies} given by Klain and Chen, we rely on the equivalence of the previous characterization to the following description of simple rigid motion invariant valuations (see \cite{KlainshortproofHadwigers1995,Chensimplifiedelementaryproof2004} for proofs of the corresponding statements for continuous valuations on $\mathcal{K}^n$). Here, a map $\varphi:\calP^n\rightarrow E$ into a real vector space $E$ is called simple if $\varphi(P)=0$ for all polytopes $P\in\calP^n$ that are contained in a proper affine subspace.
\begin{maintheorem}
	\label{maintheorem:VolumeCharacterization}
	Let $\varphi:\calP^n\rightarrow\R$ be a measurable, translation and $\SO(n)$-invariant, and simple valuation. Then there exists $c\in\R$ such that $\varphi=c V_n$.
\end{maintheorem}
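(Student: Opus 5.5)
We argue by induction on the dimension $n$. The case $n=1$ is immediate: a simple translation invariant valuation on segments is additive in the length, i.e. $f(a+b)=f(a)+f(b)$ with $f(a)=\varphi([0,a])$. Since $f$ is measurable, $f(a)=ca$ by the classical result on measurable Cauchy solutions. For general $n$, the first step is to split $\varphi$ into its even and odd parts. We set $\varphi_\pm(P)=\frac12(\varphi(P)\pm\varphi(-P))$; both are again measurable, translation and $\SO(n)$-invariant, simple valuations.

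For a simple translation invariant valuation on polytopes one has the standard decomposition (Hadwiger, Sah, McMullen) into homogeneous components of degrees $n$ and $n-1$. We would first show that measurability forces polynomiality in dilations. Concretely, $t\mapsto\varphi(tP)$ is measurable on $(0,\infty)$. Cutting $tP$ into dilated pieces (for simplices, $2^n$ pieces of $\frac{t}{2}\Delta$-type via the standard subdivision, up to translations and reflections) gives a Cauchy-type functional equation. Measurability then yields $\varphi(tP)=t^n\varphi_n(P)+t^{n-1}\varphi_{n-1}(P)$. Here $\varphi_{n-1}$ is a $1$-homogeneous-in-facets, i.e. $(n-1)$-homogeneous, simple valuation, which by the usual theory (Schneider, McMullen) is odd. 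The even part is then $n$-homogeneous.

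For the even part, we use the Klain approach restricted to polytopes. An even simple translation invariant valuation vanishes on prisms $Q\times[0,s]$ over lower dimensional bases up to volume multiples? More precisely, we restrict to hyperplanes. For each hyperplane $H$ and unit normal $u$, the map $Q\mapsto\varphi(Q+[0,u])$ on $\calP(H)$ is a measurable, simple, translation and $\SO(n-1)$-invariant valuation. By induction it equals $c\,\vol_{n-1}(Q)$, with $c$ independent of $H$ by $\SO(n)$-invariance. Replacing $\varphi$ by $\varphi-cV_n$, we may therefore assume that $\varphi$ vanishes on all right cylinders. Even simple valuations vanishing on right cylinders vanish on all polytopes, by the elementary cut-and-rearrange argument: any parallelotope is equidecomposable with a right cylinder up to translations, and evenness handles the simplex-versus-reflected-simplex step (Klain's zonotope argument only uses polytopal zonotopes here). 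Hence $\varphi_+=cV_n$.

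The odd part is the main obstacle. Here $\varphi_-$ is $(n-1)$-homogeneous, simple and odd, hence of the form
\begin{align*}
\varphi_-(P)=\sum_{F \text{ facet}} f(u_F)\vol_{n-1}(F)
\end{align*}
for some odd function $f$ on the sphere, determined up to restrictions of linear functions. The function $f$ is a priori arbitrary, with no regularity. I expect this to be exactly where the abstract's "regularity result for translation invariant, measurable, $1$-homogeneous, simple valuations" enters: measurability of $\varphi$ should force $f$ to be measurable on $S^{n-1}$, by evaluating $\varphi$ on continuously parametrized families such as thin prisms or wedges whose facet normals sweep the sphere. Once measurability is established, $\SO(n)$-invariance makes $f$ invariant up to linear functions. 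The resulting Cauchy-type equation for $f$ on orbits then forces $f$ to be constant (when $n\ge2$, $\SO(n)$ is transitive), hence linear plus constant. Since $f$ is odd, $f$ is linear, and such $f$ give $\varphi_-=0$ by Minkowski's relation $\sum_F u_F\vol_{n-1}(F)=0$. Proving the measurability transfer to $f$ is the step I would attack first, reducing to dimension $2$ by considering prisms over polygons.
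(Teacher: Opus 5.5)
Your proposal breaks down, for $n\ge 3$, at the claim that a simple translation invariant valuation on polytopes has homogeneous components only in degrees $n$ and $n-1$. That is a theorem for continuous valuations on $\mathcal{K}^n$. On $\calP^n$ it fails even under measurability, because Hadwiger's representation produces simple weakly continuous valuations in every degree $1\le k\le n$.

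Concretely, take $n=3$ and a continuous function $f$ on $\calU^2_3$ that is odd in the sense of \autoref{section:CharacterizationSimpleWeak}. Then
\[
\psi(P)=\sum_{u\in\calU^{2}_3(P)}f(u)\vol_1(F(P,u))
\]
has the following properties. It is a measurable, translation invariant, simple valuation; the valuation property reduces to hyperplane cuts, where the newly created frames cancel by oddness in each entry. It is homogeneous of degree $1$. It is even, since the tight frames of $-P$ are the $(-u_1,-u_2)$ and $f(-u_1,-u_2)=f(u_1,u_2)$. Being $1$-homogeneous, it is Minkowski additive, so it vanishes on segments, on all zonotopes and on all right cylinders. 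Yet the $24$ tight frames of the regular octahedron form $12$ pairs $\pm(u_1,u_2)$ with no further sign identifications. So you can choose $f$ equal to $1$ at all of them, and then $\psi$ of the octahedron is nonzero.

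Hence two of your claims fail in this setting: that the even part is $n$-homogeneous, and that even simple valuations vanishing on right cylinders vanish on all polytopes. The second is exactly where Klain's argument leaves $\calP^n$. The simplex step needs vanishing on the centrally symmetric middle piece of a parallelotope after two opposite corner simplices are removed. For $n=3$ this piece is an octahedron, not a zonotope, and Klain obtains the vanishing from generalized zonoids and continuity on $\mathcal{K}^n$. Similarly, for $n\ge 4$ the odd part can contain components of degree $n-3,\dots$, so your treatment of the odd part through a function on $S^{n-1}$ reaches only one component. Your dilation step is also not right as stated: for $n\ge 3$ the $2^n$ pieces of the standard subdivision are not all translates or reflections of $\tfrac12\Delta$. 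The correct tool is \autoref{theorem:PolynomialityMcMullen} together with \autoref{proposition:homDecompMeasuBounded}. That gives $\varphi(tP)=\sum_{k=0}^n t^k\varphi_k(P)$, with all degrees present in general.

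What is missing is an argument that uses $\SO(n)$-invariance to kill \emph{every} component $\varphi_k$ with $0\le k\le n-1$. You correctly sensed that a regularity result for $1$-homogeneous simple valuations is the key, but it is needed for the degree-$1$ component, with $f$ living on the Stiefel manifold $\calU^{n-1}_n$, not for degree $n-1$ with $f$ on the sphere. In the paper the steps are:
\begin{enumerate}
\item \autoref{theorem:1HomSimple} writes a simple $1$-homogeneous $\varphi$ through an $f$ obtained by evaluating $\varphi$ on the orthogonal simplices $Q_{u_0}(u)$, so $f$ inherits measurability.
\item Equivariance lets one rotate all tight frames of a simplex into a set where $f$ is bounded, which makes $f$ bounded.
\item Averaging over $\SO(n)$ then replaces $f$ by a continuous invariant odd function, which must vanish (\autoref{theorem:Equiv1homValuation}, \autoref{prop:simple1homHadwiger}).
\item For $2\le k\le n-1$ one inducts on $n$. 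Polarization on orthogonal splittings shows that $\varphi_k$ vanishes on orthogonal Minkowski sums. The Canonical Simplex Decomposition of an orthogonal simplex then gives $\varphi_k(\Delta)=[(1-t)^k+t^k]\varphi_k(\Delta)$, and \autoref{lemma:GeneralizedDecompOrthSimplices} finishes.
\item Only then does $\varphi=\varphi_n=cV_n$ follow from \autoref{theorem:HadwigerTopDegree}, using the weak continuity supplied by \autoref{corollary:measurableImpliesWeaklyCont}.
\end{enumerate}
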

Let us briefly discuss out approach and its relation to Hadwiger's original proof. As pointed out by McMullen and Schneider \cite{McMullenSchneiderValuationsconvexbodies1983}, Hadwiger's proof only uses that the valuations are defined on all convex bodies
in order to show that a $1$-homogeneous and simple valuation in this class has to vanish identically. This is achieved with the help of his characterization of the mean width (compare \cite{Chensimplifiedelementaryproof2004}*{Theorem~2.3}), which exploits the fact that a $1$-homogeneous translation invariant valuation is Minkowski additive in order to apply an averaging procedure to a convex body. For continuous and rigid motion invariant valuations on $\mathcal{K}^n$, this implies that a $1$-homogeneous valuation is already determined by its values on Euclidean balls. At this step, the argument cannot be generalized to continuous valuations defined solely on polytopes, since the limit of this averaging procedure (a ball with the same mean width) does not belong to the domain of the valuation.\\

The main idea underlying our approach is to instead average a suitable representation of a $1$-homogeneous translation invariant valuation on $\calP^n$, which is the reason why the argument extends to measurable instead of just continuous valuations. The key ingredient is a representation of so-called weakly continuous valuations on polytopes (see \autoref{section:valuationsPolytopes} for the definition) due to Hadwiger \cite{HadwigerTranslationsinvarianteadditiveund1952}, which provides a formula for such valuations in terms of certain odd functions on Stiefel manifolds. In Hadwiger's original proof of this representation formula, the relation between this function and the underlying valuation is hidden by an involved induction scheme, however, at least for $1$-homogeneous valuations, the proof can be adapted to show that this function may be chosen by evaluating the valuation in a family of orthogonal simplices, compare \autoref{theorem:1HomSimple}. In particular, it inherits certain regularity properties of the valuation, which are sufficient to show the required triviality result for $1$-homogeneous simple valuations, see \autoref{prop:simple1homHadwiger}.\\

This approach also works for translation invariant  and $\SO(n)$-equivariant valuations with values in a general finite dimensional representation of $\SO(n)$, see \autoref{theorem:Equiv1homValuation}. As a further application, we extend Schneider's characterization of the Steiner point map \cite{SchneiderSteinerpointsconvex1971} from convex bodies to the polytopal setting. Here, the Steiner point of $K\in\mathcal{K}^n$ is given by
\begin{align*}
	s(K):=n\int_{S^{n-1}}vh_K(v)d\sigma(v),
\end{align*}
where $h_K(v):=\sup_{x\in K}\langle x,v\rangle$ and $\sigma$ denotes the spherical Lebesgue measure. Note that $s$ has the following properties:
\begin{enumerate}
	\item $s$ is translation and $\SO(n)$-equivariant: For all $P\in\calP^n$, $g\in \SO(n)$, and $x\in\R^n$,
		\begin{align*}
			s(gP+x)=gs(P)+x.
		\end{align*}
	\item $s$ is Minkowski additive:  $s(P_1+P_2)=s(P_1)+s(P_2)$ for $P_1,P_2\in\calP^n$.
\end{enumerate}
Schneider showed that any continuous map on $\mathcal{K}^n$ with these properties coincides with $s$. We establish the following version in the polytopal setting.
\begin{maintheorem}\label{maintheorem:SteinerPoint}
	Let $\varphi:\calP^n\rightarrow\R^n$ be measurable, translation and rotation equivariant, and Minkowski additive. Then $\varphi$ coincides with the Steiner point map.
\end{maintheorem}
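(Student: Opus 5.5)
Replace $\varphi$ by $\psi:=\varphi-s$. This map is still measurable and Minkowski additive. It is translation invariant, since $\psi(P+x)=\varphi(P)+x-s(P)-x=\psi(P)$. It is $\SO(n)$-equivariant, meaning $\psi(gP)=g\psi(P)$, with values in the standard representation $\R^n$. The goal is then to show $\psi=0$.

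\emph{Step 1: $\psi$ is a $1$-homogeneous valuation.} A Minkowski additive map on $\calP^n$ is a valuation: if $P\cup Q$ is convex, then $P\cup Q+P\cap Q=P+Q$, and applying additivity gives $\psi(P\cup Q)+\psi(P\cap Q)=\psi(P)+\psi(Q)$. Additivity gives $\psi(mP)=m\psi(P)$ for $m\in\mathbb{N}$, and from this $\psi(qP)=q\psi(P)$ for rational $q\ge0$. The map $\lambda\mapsto\psi(\lambda P)$ on $(0,\infty)$ is then additive and measurable, so by the classical theorem on measurable additive functions (Banach/Sierpi\'nski) it is linear. Hence $\psi$ is $1$-homogeneous. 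Applying additivity to points also gives $\psi(\{x\})=0$.

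\emph{Step 2: reduce to a simple valuation, or use the equivariant theorem directly.} The cleanest route is to invoke the equivariant version announced in the introduction (\autoref{theorem:Equiv1homValuation}). It should say that a measurable, translation invariant, $\SO(n)$-equivariant, $1$-homogeneous valuation with values in a finite-dimensional representation is determined by, or is a combination of, standard objects. For the representation $\R^n$, the only continuous translation invariant $1$-homogeneous $\SO(n)$-equivariant $\R^n$-valued valuation on $\mathcal{K}^n$ is $0$. Indeed, such a valuation has the form $K\mapsto\int_{S^{n-1}} h_K(v)f(v)\,d\sigma$ with $f$ an equivariant vector field, i.e. $f(v)=cv$, and this vanishes on translation invariant valuations because $\int v\langle x,v\rangle\,d\sigma\neq0$ would break translation invariance. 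So I expect the equivariant result to give $\psi=0$.

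If \autoref{theorem:Equiv1homValuation} instead only handles simple valuations, I would reduce by induction on dimension. Restrict $\psi$ to polytopes in a line $\ell=\R u$. There, translation invariance and $1$-homogeneity force $\psi([0,tu])=t\,a(u)$. Equivariance under rotations fixing $u$ forces $a(u)\in\R u$, and the rotation mapping $u\mapsto -u$ (which exists for $n\ge2$) together with $[0,-tu]=[0,tu]-tu$ forces $a(u)=-a(u)$, so $a=0$. Repeating this on $k$-dimensional subspaces with the stabilizer of the subspace, where a non-simple part would live on lower-dimensional faces already shown to vanish, yields that $\psi$ is simple. The simple, $1$-homogeneous, equivariant case is then killed by the regularity result (\autoref{theorem:1HomSimple}) together with averaging over $\SO(n)$, as in \autoref{prop:simple1homHadwiger}. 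The case $n=1$ is trivial: $\psi([a,b])$ is translation invariant, additive, and linear in the length, and intervals are symmetric under reflection.

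\emph{Main obstacle.} The hardest step is the vanishing of the equivariant simple $1$-homogeneous part. That step is where measurability is really used, through the Stiefel-manifold representation inherited from values on orthogonal simplices. I would rely on the paper's theorem for it rather than reprove it.
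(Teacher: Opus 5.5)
Your Step~1 is fine, but Step~2 has a genuine gap at exactly the step that carries the proof.

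The ``cleanest route'' is not available. \autoref{theorem:Equiv1homValuation} requires the valuation to be simple, and it only produces a representation by an odd $f\in C(\SO(n),E)^{\SO(n)}$, not a classification. You also cannot argue via continuous valuations on $\mathcal{K}^n$: a measurable valuation on $\calP^n$ need not extend to $\mathcal{K}^n$, and circumventing this is the whole point of the paper.

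In your fallback, the final claim that the simple equivariant part is killed ``as in \autoref{prop:simple1homHadwiger}'' does not go through. There, averaging yields an $\SO(n)$-\emph{invariant} function, which is constant and hence zero by oddness. For $\R^n$-valued $\psi$, averaging yields an \emph{equivariant} function, i.e.\ $f(g)=g\,f(e)$. Nonzero odd functions of this type exist: for $n=2$ all of them are odd, and for $n\ge 3$ the map $u\mapsto c\,u_n$ is odd, where $u_n$ completes $u$ to a positively oriented orthonormal basis. So you still have to show that the valuations $P\mapsto\sum_{u\in\calU^{n-1}_n(P)}f(u)\vol_1(F(P,u))$ induced by these $f$ vanish. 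This is where Minkowski's relation $\sum_F\vol_{n-1}(F)\,u_F=0$ enters, with $u_F$ the outer unit normal of the facet $F$ (i.e.\ the surface area measure is centered). For $n=2$ this is how \autoref{lemma:Steiner2d} concludes. For $n\ge 3$, group the sum by the facet $F(P,u_1)$ and induct down to Minkowski's relation for polygons. Be aware that the paper's proof of \autoref{lemma:triviallitySimpleVectorValued} asserts that oddness alone forces $f=0$ for $n\ge3$. That overlooks $u\mapsto u_n$, so the Minkowski-type argument is needed there too.

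The reduction to the simple case is also only sketched. Your argument on lines is correct, but it does not simply ``repeat'' on a $k$-dimensional subspace $H$. Once $\psi|_{\calP(H)}$ is known to be simple relative to $H$, its $H^\perp$-component is $\SO(H)$-invariant and does vanish by \autoref{prop:simple1homHadwiger} in dimension $k$. Its $H$-component, however, is a simple, $\SO(H)$-equivariant, $H$-valued valuation, so you need exactly the missing vector-valued vanishing result in dimension $k$.

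The paper instead inducts on the ambient dimension with the full, non-simple statement (\autoref{proposition:triviallityVectorValued}):
\begin{enumerate}
\item the induction hypothesis kills the $H$-component;
\item \autoref{maintheorem:HadwigerPolytopes} shows the $H^\perp$-components are multiples of $V_1$, hence $\mathrm{O}(H)$-invariant;
\item an element $\mathrm{diag}(g,-\mathrm{Id}_{H^\perp})\in\SO(n)$ with suitable $g\in\mathrm{O}(H)$ then forces them to vanish.
\end{enumerate}
Only afterwards is the simple case treated.

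Finally, your remark on $n=1$ is wrong. $\SO(1)$ contains no reflection, and $\varphi([a,b])=b$ is a counterexample, so the theorem is implicitly for $n\ge 2$.
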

Out proof of this result differs substantially from Schneider's proof for the corresponding result for continuous valuations on convex bodies. His proof exploits that a continuous Minkowski additive map on $\mathcal{K}^n$ extends to a $\SO(n)$-equivariant linear map on $C^2$-functions on the unit sphere (see also \cite{GoodeyWeilDistributionsvaluations1984} for a more general version of this construction), which reduces the proof to an application of Schur's Lemma. Our proof more closely resembles the approaches to  \autoref{theorem:HadwigerConvexBodies} through the description of simple valuations, however, we would like to point out that this description is in turn based on the properties of the $\SO(n)$-representations in the space of continuous functions on the relevant Stiefel manifold, which mirrors Schneider's approach.\\

Let us add the following remarks before we discuss the plan of the article. First, let us mention that several classification results for valuations on convex bodies can be reduced to Hadwiger's classification in \autoref{theorem:HadwigerConvexBodies}. If the remaining steps of these arguments can be formulated entirely within the class of polytopes, then \autoref{maintheorem:HadwigerPolytopes} may be used to obtain a  corresponding classification result for valuations on polytopes. This applies, for example, to Schneider's characterization of the intermediate area measures \cite{SchneiderKinematischeBeruhrmaefur1975}. We leave the details to the reader.\\
Second, in light of Alesker's finiteness result for continuous translation invariant valuations on $\mathcal{K}^n$ that are invariant under compact subgroups of $\GL(n,\R)$ operating transitively on the unit sphere, it is a natural question whether similar results hold in the polytopal setting of \autoref{maintheorem:HadwigerPolytopes}. For measurable valuations, the answer is no - there exist natural constructions of such valuations in terms of weighted sums over faces of suitable dimensions which provide ample examples (see, for example, \cite{Aleskerextendabilitycontinuityvaluations2014,BreidingEtAlzonoidalgebrageneralized2022,Wannererextendabilitycontinuityangular2020}. It would be interesting to know whether such finiteness results hold under stronger regularity assumptions on the valuations, for example local boundedness or continuity on the class of polytopes. Note that these examples cannot be locally uniformly continuous since they would otherwise extend to continuous valuations on $\mathcal{K}^n$. In particular, it seems to be nontrivial to construct an example that is continuous on $\calP^n$ but does not extend to $\mathcal{K}^n$.

\subsection{Plan of the article}
Since our approach heavily relies on Hadwiger's characterization of weakly continuous valuations, we first need to show that a measurable and translation invariant valuation is weakly continuous (which does not hold for more general valuations). This is based on results by McMullen \cite{McMullenWeaklycontinuousvaluations1983} that relate weak continuity to the polynomial behavior of translation invariant valuations on polytopes under Minkowski addition. We therefore include a short discussion of polyomials between abelian groups in \autoref{section:polynomiality}. This is in particular relevant, since a key part of an argument by McMullen seems to implicitly rely on a (to me) non-obvious equivalence between two a priori distinct notions of weak continuity for translation invariant valuations, compare the discussion in \autoref{section:weaklyContValuations} and in particular \autoref{theorem:equivJointSepCont}.\\
The remaining article is structured as follows: In addition to the discussion on notions of polynomiality, \autoref{section:background} includes the necessary background on convex polytopes including the Canonical Simplex Decomposition, as well as some basics facts on representations of $\SO(n)$, which are used in the proof of \autoref{maintheorem:SteinerPoint}.\\
\autoref{section:valuationsPolytopes} contains some well known facts about translation invariant valuations on polytopes and the relation of the homogeneous decomposition to weak continuity. In particular, this section is used to show that measurable translation invariant valuations are weakly continuous. Most of the results are simple adaptations of well known constructions, but we include the details for the convenience of the reader.\\
\autoref{section:CharacterizationSimpleWeak} contains the refined version of Hadwiger's characterization of simple weakly continuous valuations of degree $1$. The key result is \autoref{theorem:Equiv1homValuation}, which is the central technical contribution of this article. The main results are obtain from this result in \autoref{section:HadwigerPolytope} and \autoref{section:SteinerPoint}, respectively.
\begin{acknowledgement}
	This projects originates in a discussion on possible extensions of Hadwiger's proof of \autoref{theorem:HadwigerConvexBodies} to smaller families of convex bodies at the Erd\H{o}s Center at the Alfr\'ed R\'enyi Institute of Mathematics during the "Focused Workshop on Current Trends in Geometric Valuation Theory" in 2025, and I want to thank the participants and organizers for the discussions and the Erd\H{o}s Center for the hospitality.
	\\
	This research was funded in whole or in part by the Austrian Science Fund (FWF), \href{https://www.doi.org/10.55776/PAT4205224}{10.55776/PAT4205224}. 
\end{acknowledgement} 

\section{Background}
	\label{section:background}
	\subsection{Notions of polynomiality}
		\label{section:polynomiality}
		We will rely on the following notion of polynomiality, which is also used in \cite{PukhlikovKhovanskiiFinitelyadditivemeasures1992}.		
		\begin{definition}\label{def:polynomial}
			Given two abelian groups $N,L$, a map $h:N\rightarrow L$ is called a polynomial of degree at most $m\in\mathbb{N}$ if the following two conditions hold:
			\begin{enumerate}
				\item if $m=0$, then $h$ is constant,
				\item if $m>0$, then for any $a\in N$, the map $h_a:N\rightarrow L$, $h_a(x)=h(x+a)-h(x)$, is a polynomial of degree at most $m-1$.
			\end{enumerate}
		\end{definition}
		Equivalently, for all $x,a_1,\dots,a_{m+1}\in N$,
		\begin{align}
			\label{eq:polynomialabelianGroups}
			\sum_{k=0}^{m+1}(-1)^{k} \sum_{1\le i_1<\dots <i_k\le m+1}h(x+a_{i_1}+\dots+a_{i_k})=0
		\end{align}
		The following results follow easily using the inverse of the Vandermonde matrix as well as Eq.~\eqref{eq:polynomialabelianGroups}.
		\begin{corollary}
			If $N$ is an abelian group and $L$ is a $\Q$ vector space, then any polynomial $h:N\rightarrow L$ of degree at most $m$ is a sum of homogeneous polynomials of degree at most $m$: There exists unique polynomials $h_j:N\rightarrow L$ of degree at most $j$ such that
			\begin{align*}
				h(x)=\sum_{j=0}^m h_j(x)\quad \text{for}~x\in N,
			\end{align*}
			where $h_j(nx)=n^jh_j(x)$ for all $n\in\mathbb{Z}$.
		\end{corollary}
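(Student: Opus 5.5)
The plan is to pass from a polynomial on $N$ to a polynomial in the integer variable $n$, decompose that by inverting a Vandermonde matrix over $\Q$, and then check that the resulting coefficients are again polynomials on $N$. Uniqueness will come from the same Vandermonde argument.

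\textbf{Restriction to lines.} Fix $x\in N$ and consider $p_x:\mathbb{Z}\rightarrow L$, $p_x(n)=h(nx)$. Applying Eq.~\eqref{eq:polynomialabelianGroups} with base point $nx$ and $a_1=\dots=a_{m+1}=x$ gives $\Delta^{m+1}p_x(n)=0$ for all $n\in\mathbb{Z}$, where $\Delta$ is the forward difference. Since $L$ is a $\Q$-vector space, Newton's forward difference formula shows $p_x(n)=\sum_{k=0}^m \binom{n}{k}(\Delta^k p_x)(0)$ for $n\ge0$. For negative $n$ we use that $\Delta^{m+1}p_x=0$ determines $p_x$ from $m+1$ consecutive values in both directions. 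Expanding the binomial coefficients in powers of $n$, with rational coefficients, yields $p_x(n)=\sum_{j=0}^m n^j c_j(x)$ for unique $c_j(x)\in L$, valid for all $n\in\mathbb{Z}$.

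\textbf{Explicit formula and definition of $h_j$.} Concretely, evaluating at $n=0,1,\dots,m$ gives the linear system $h(nx)=\sum_j n^j c_j(x)$. Its matrix is the Vandermonde matrix $V=(n^j)_{n,j}$, which is invertible over $\Q$. Hence $c_j(x)=\sum_{n=0}^m (V^{-1})_{jn}\, h(nx)$. Set $h_j:=c_j$; taking $n=1$ gives $h=\sum_j h_j$.

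\textbf{Homogeneity.} For $k\in\mathbb{Z}$ we have $h(nkx)=\sum_j n^j k^j c_j(x)$ for all $n$. Uniqueness of the coefficients for the point $kx$ then gives $c_j(kx)=k^jc_j(x)$.

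\textbf{$h_j$ is a polynomial of degree at most $j$.} The map $x\mapsto h(nx)$ is a polynomial of degree at most $m$, since it is the composition of $h$ with the group homomorphism $x\mapsto nx$ and Eq.~\eqref{eq:polynomialabelianGroups} is preserved under this substitution. Rational linear combinations of such polynomials are again polynomials of degree at most $m$, so each $h_j$ has degree at most $m$. To lower the degree to $j$, we use homogeneity. Set $D=\Delta_{a_1}\cdots\Delta_{a_{j+1}}h_j$, a polynomial of degree at most $m$. We show $D=0$ by downward induction: it suffices to show that $Q(a_1,\dots,a_r):=\Delta_{a_1}\cdots\Delta_{a_r}h_j(x)$ vanishes for $r>j$. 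As a function on $N^{r+1}$ this is a polynomial, and scaling all variables by $k$ multiplies it by $k^j$ by the homogeneity just proved. On the other hand, the top-degree component of $Q$ in the $a$-variables has degree at least $r>j$ unless it vanishes. The cleanest route is to note that $k\mapsto Q(kx,ka_1,\dots,ka_r)$ is a multiple of $k^j$, while the $(m+1)$-fold difference expansion, applied jointly on $N^{r+1}$, forces the component of degree $j$ to have all $r>j$ differences vanish.

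\textbf{Expected obstacle.} The first three steps are routine. The main difficulty is this last step, namely proving $\deg h_j\le j$ rather than merely $\le m$. The fallback is an induction on $m$: $(h_j)_a$ is a polynomial of degree at most $m-1$, and comparing its homogeneous decomposition (by the inductive hypothesis) with that of $h_a$ shows that its components have degree at most $j-1$.
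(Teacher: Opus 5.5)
Your first steps are sound and match the paper's one-line justification (inverse Vandermonde matrix together with Eq.~\eqref{eq:polynomialabelianGroups}). Setting $h_j(x)=\sum_{n=0}^m(V^{-1})_{jn}h(nx)$ gives $h=\sum_jh_j$. Homogeneity and uniqueness follow from uniqueness of the coefficients of $n\mapsto h(nx)$, and each $h_j$ has degree at most $m$.

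The genuine gap is the claim $\deg h_j\le j$, which you flag as the obstacle but do not prove. Your ``cleanest route'' appeals to a ``top-degree component of $Q$ in the $a$-variables'' and to a joint difference expansion on $N^{r+1}$ that ``forces'' the vanishing. No such multivariable degree decomposition has been set up on a general abelian group. The scaling $Q(kx;ka_1,\dots,ka_r)=k^jQ(x;a_1,\dots,a_r)$ alone gives no contradiction, so that route asserts the conclusion rather than deriving it.

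The fallback fails as described. Decomposing $h_a=\sum_j(h_j)_a$ only yields $(h_a)_i=\sum_j((h_j)_a)_i$, which cannot separate the contributions of different $j$. What you would actually need is that $k\mapsto h_j(kx+a)-h_j(kx)$ has degree at most $j-1$ in $k$. That is a two-variable statement of the same kind, and you have not established it.

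The downward induction you mention does close, but it needs one observation you are missing: at each stage the top difference is independent of $x$ and multi-additive.
\begin{itemize}
\item Suppose $r>j$ and all $(r+1)$-fold differences of $h_j$ vanish; this holds for $r=m$ by your step four. Set $Q(x;a_1,\dots,a_r)=\Delta_{a_1}\cdots\Delta_{a_r}h_j(x)$.
\item $Q$ does not depend on $x$, because $Q(x+b;a)-Q(x;a)=\Delta_b\Delta_{a_1}\cdots\Delta_{a_r}h_j(x)$ is an $(r+1)$-fold difference.
\item $Q$ is additive in each $a_i$, because $\Delta_{a+b}=\Delta_a+\Delta_b+\Delta_a\Delta_b$ and the last term again produces an $(r+1)$-fold difference.
\item Hence $Q(x;2a_1,\dots,2a_r)=2^rQ(x;a_1,\dots,a_r)$. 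Homogeneity of $h_j$ gives $Q(2x;2a_1,\dots,2a_r)=2^jQ(x;a_1,\dots,a_r)$.
\item By $x$-independence the two left-hand sides coincide, so $(2^r-2^j)Q=0$. Since $L$ is a $\Q$-vector space and $r>j$, this gives $Q=0$.
\end{itemize}
Descending from $r=m$ to $r=j+1$ gives $\deg h_j\le j$. Equivalently, you could restrict to the lattice generated by $x,a_1,\dots,a_{j+1}$, where $h_j$ becomes an honest homogeneous polynomial of degree $j$ with coefficients in $L$. That restriction is what your ``cleanest route'' was gesturing at, but it requires multivariable Newton interpolation on $\mathbb{Z}^{j+2}$, which you did not supply.
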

		For $a=(a_1,\dots,a_n)\in N^n$ and an ordered index set $I=\{i_1<\dots<i_j\}\subset \{1,\dots,n\}$, we set $a^I:=a_{i_1}\otimes\dots\otimes a_{i_j}\in N^{\otimes j}$.
		\begin{corollary}\label{corollary:homogneousDecompPolynomials}
			If $N,L$ are vector spaces over $\mathbb{Q}$ and $h:N\rightarrow L$ is a polynomial of degree at most $m\in\mathbb{N}$, then there exist unique symmetric maps $\lambda_j\in \mathrm{Hom}_\Q(N^{\otimes j},L)$, $0\le j\le m$ such that
			\begin{align*}
				h\left(\sum_{i=1}^n a_i\right)=\sum_{j=0}^m \binom{n}{j}\sum_{|I|=j} \lambda_j(a^I)
			\end{align*}
			for all $q_j\in \Q$, $a_j\in N$, $n\in\mathbb{N}$.
		\end{corollary}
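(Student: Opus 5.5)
The plan is to reduce the statement to the previous corollary and then polarize each homogeneous component. I read the displayed formula as the multilinear expansion of $h(\sum_i a_i)$ into contributions indexed by subsets $I$, with the binomial factor a normalization convention. I will first establish the structural fact: $h=\sum_{j=0}^m \lambda_j(x,\dots,x)$ with $\lambda_j$ symmetric and $\Q$-multilinear. The displayed expansion should then follow by expanding each $\lambda_j(\sum_i a_i,\dots,\sum_i a_i)$ by multilinearity and grouping terms, with the normalization fixed at that point. One caveat: in the literal form, with $n=1$ and $j\ge 2$ the factor $\binom{1}{j}$ vanishes, so the terms $\lambda_j(a_1,\dots,a_1)$ with repeated entries must be accounted for when grouping. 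I will adjust the formula to match whatever the multilinear expansion actually produces.

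\emph{Step 1 (homogeneity over $\Q$).} By the preceding corollary, $h=\sum_{j=0}^m h_j$ with $h_j$ a polynomial of degree at most $j$ and $h_j(kx)=k^jh_j(x)$ for $k\in\mathbb{Z}$. Since $N$ is a $\Q$-vector space, writing $x=k\cdot(x/k)$ gives $h_j(x/k)=k^{-j}h_j(x)$. Hence $h_j(qx)=q^jh_j(x)$ for all $q\in\Q$.

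\emph{Step 2 (polarization).} Let $\Delta_a f(x):=f(x+a)-f(x)$, and define
\begin{align*}
\lambda_j(a_1,\dots,a_j):=\frac{1}{j!}\,\Delta_{a_1}\cdots\Delta_{a_j}h_j(0).
\end{align*}
Symmetry is immediate because the difference operators commute. For additivity in the first slot, use $\Delta_{a+b}=\Delta_a+\Delta_b+\Delta_a\Delta_b$. The extra term produces $j+1$ successive differences of $h_j$, which vanish by Definition~\ref{def:polynomial} since $\deg h_j\le j$. Additivity together with $\Q$-homogeneity, or simply additivity on a $\Q$-vector space, gives $\Q$-linearity. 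On the diagonal,
\begin{align*}
\Delta_x^j h_j(0)=\sum_{k=0}^j(-1)^{j-k}\binom{j}{k}k^j h_j(x)=j!\,h_j(x)
\end{align*}
by Step 1 and the standard finite-difference identity. Therefore $\lambda_j(x,\dots,x)=h_j(x)$.

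\emph{Step 3 (expansion and uniqueness).} Expanding $h(\sum_i a_i)=\sum_j\lambda_j(\sum_i a_i,\dots,\sum_i a_i)$ by multilinearity and symmetry yields the claimed formula after grouping terms by their index sets. For uniqueness, suppose two families $\lambda_j,\lambda'_j$ give the same $h$. Then their diagonals $x\mapsto\lambda_j(x,\dots,x)$ are homogeneous components of $h$ of degree $j$, so they agree by the uniqueness in the previous corollary. A symmetric multilinear map is recovered from its diagonal by the same polarization formula, so $\lambda_j=\lambda'_j$.

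The main obstacle is the additivity of $\lambda_j$ in Step 2. It hinges on the vanishing of $(j+1)$-fold differences of $h_j$, which is exactly where the degree bound $\deg h_j\le j$ from the previous corollary is needed, not merely the homogeneity of $h_j$.
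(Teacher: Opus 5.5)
Your argument is correct and matches what the paper intends. The paper only remarks that the corollary follows from Vandermonde inversion, which is what produces the $\Q$-homogeneous components $h_j$, together with the alternating-sum identity \eqref{eq:polynomialabelianGroups}; that identity is exactly your vanishing of $(j+1)$-fold differences and your polarization $\lambda_j(a_1,\dots,a_j)=\frac{1}{j!}\Delta_{a_1}\cdots\Delta_{a_j}h_j(0)$. You are also right that the displayed identity cannot hold as written, since for $n=1$ it would force $h$ to be affine. What your proof establishes is $h(x)=\sum_{j=0}^m\lambda_j(x^{\otimes j})$ with unique symmetric $\Q$-multilinear $\lambda_j$, equivalently $h(\sum_{i=1}^n q_ia_i)=\sum_{j=0}^m\sum_{i_1,\dots,i_j=1}^n q_{i_1}\cdots q_{i_j}\lambda_j(a_{i_1}\otimes\dots\otimes a_{i_j})$, and this is the content the paper actually uses to define $\R$-polynomials.
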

		If $h:N\rightarrow L$ is a polynomial, we call $h$ an $\R$-polynomial if the maps $\lambda_j\in \mathrm{Hom}_\Q(N^{\otimes j},L)$ from \autoref{corollary:homogneousDecompPolynomials} are $\R$-linear. In order to avoid ambiguities, we will call a polynomial in the sense of \autoref{def:polynomial} defined between $\Q$-vector spaces a $\Q$-polynomial. In the homogeneous case, we will therefore also distinguish between real or rational homogeneous polynomials. For polynomials on (finite dimensional) real vector spaces, $\R$-polynomiality is equivalent to several regularity properties of the underlying function, compare \cite{Kuczmaintroductiontheoryfunctional2009}.
		\begin{lemma}
			\label{lemma:finiteDimPolynomials}
			Let $f:\R^n\rightarrow \R^m$ be a $\Q$-polynomial. The following are equivalent:
			\begin{enumerate}
				\item $f$ is an $\R$-polynomial.
				\item $f$ is continuous on a nonempty open set.
				\item $f$ is separately continuous on a nonempty open set.
				\item There exists a nonempty open set $U\subset\R^n$ such that $f|_U$ is Borel measurable.
				\item There exists a nonempty open set $U\subset\R^n$ such that $f|_U$ is bounded. 
			\end{enumerate}
		\end{lemma}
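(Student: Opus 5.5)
The plan is to reduce everything to the classical case of additive functions $\R\to\R$ (Cauchy's equation) via the homogeneous decomposition of $\Q$-polynomials, and then to use the standard regularity results for additive maps. The implications (1)$\Rightarrow$(2)$\Rightarrow$(3), (2)$\Rightarrow$(4) (after shrinking $U$, a function continuous on an open set is Borel there) and (2)$\Rightarrow$(5) (shrink $U$ to a relatively compact ball) are immediate, since an $\R$-polynomial is an honest polynomial in the coordinates. Continuity also gives (3) from (2), so it remains to prove (3)$\Rightarrow$(1), (4)$\Rightarrow$(1) and (5)$\Rightarrow$(1). Working componentwise, we may assume $m=1$.

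The reduction goes as follows. By \autoref{corollary:homogneousDecompPolynomials}, $f(x)=\sum_{j\le d}\lambda_j(x,\dots,x)$ with symmetric $\Q$-multilinear $\lambda_j$, and $f$ is an $\R$-polynomial iff each $\lambda_j$ is $\R$-multilinear. Write $x=\sum_i t_ie_i$. The polarization identity (Eq.~\eqref{eq:polynomialabelianGroups}, inverted via the Vandermonde matrix) expresses $\lambda_j(a_1,\dots,a_j)$ as a finite rational combination of values $f(x_0+\sum_{i\in I}a_i)$ and of lower-degree terms. Because of this, I will restrict $f$ to lines: for fixed $x_0\in U$ and direction $v$, the function $g(t)=f(x_0+tv)$ is a $\Q$-polynomial $\R\to\R$ of degree at most $d$, with $g(t)=\sum_k c_k(t)$ where $c_k$ is the diagonal of a symmetric $\Q$-multiadditive map on $\R^k$. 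The regularity hypotheses (boundedness, measurability, or continuity in the direction $v$ on an interval) transfer to $g$ on an interval. It therefore suffices to prove the following one-variable fact: a $\Q$-polynomial $g:\R\to\R$ that is bounded, measurable, or continuous on some interval is an ordinary polynomial. Granting this, every line restriction of $f$ through points of $U$ is a real polynomial. Then each multiadditive $\lambda_j(a_1,\dots,a_j)$, obtained by polarization from such restrictions (using a finite difference $\Delta_{a_1}\cdots\Delta_{a_j}$ with small rational multiples $sa_i$, so that all points stay in $U$, and homogeneity to rescale), is $\R$-homogeneous in each argument separately. A multiadditive map that is $\R$-homogeneous in each slot is $\R$-multilinear, which gives (1).

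For the one-variable fact I will use induction on the degree via finite differences. The key case is degree $1$: an additive $a:\R\to\R$ bounded on an interval (Darboux), or measurable on an interval (via Steinhaus: $E-E$ contains a neighborhood of $0$ for $E$ of positive measure where $|a|\le M$), is linear. For higher degree, the difference $g_h(t)=g(t+h)-g(t)$ has degree $d-1$ and inherits the regularity on a slightly smaller interval for small $h$, so by induction $g_h$ is a real polynomial for all small $h$. Then $g$ is a polynomial plus something with vanishing top difference, and one extracts the top homogeneous part $c_d$. Its $d$-th difference $d!\,\lambda_d(h_1,\dots,h_d)$ is, for each fixed $h_2,\dots,h_d$, additive in $h_1$ and bounded or measurable near $0$, hence linear. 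This makes $\lambda_d$ $\R$-multilinear, and subtracting it reduces the degree.

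The main obstacle is the measurable and the separately continuous cases, where one needs uniform local boundedness of the difference maps. For measurability I would first try a Steinhaus/Ostrowski-type argument: a measurable function on $U$ is bounded on a set of positive measure. A standard result (Kuczma, Ch.~15) then says that a $\Q$-polynomial bounded on a set of positive measure is bounded on an open set, which reduces the measurable case to (5). For separate continuity, a Baire-category argument is available: separately continuous functions are Baire class one, hence Borel, which reduces (3) to (4). I would cite \cite{Kuczmaintroductiontheoryfunctional2009} for these reductions rather than reprove them.
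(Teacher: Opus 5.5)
Your proposal is correct. Note, though, that the paper does not prove this lemma: it states it as a known fact with a pointer to Kuczma's monograph, where regularity results of this type are proved for polynomial functions in greater generality (for example, boundedness only on a set of positive measure). Your route is an essentially self-contained reduction to Cauchy's equation:
\begin{itemize}
\item write $f=\sum_j\lambda_j(x,\dots,x)$;
\item show that the top form $\lambda_d$ is $\R$-multilinear, because $h_1\mapsto\lambda_d(h_1,h_2,\dots,h_d)$ is additive and bounded (Darboux), respectively measurable (Steinhaus), near $0$;
\item subtract the resulting continuous polynomial and induct on the degree.
\end{itemize}
Your approach gives an elementary proof using only the one-variable theory of additive functions. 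The citation gives brevity and a more general statement.

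Several steps can be tightened or simplified.
\begin{itemize}
\item When you fix $h_2,\dots,h_d$, they must be small so that all evaluation points of the finite differences lie in the interval (or in $U$). Arbitrary $h_i$ are then handled by rational rescaling, as you already indicate in the multivariable part.
\item The top-form argument already covers the measurable case via Steinhaus. So neither the preliminary induction on $g_h$ nor the detour in your last paragraph (bounded on a set of positive measure implies bounded on an open set) is needed.
\item You can run the top-form argument directly on $\R^n$ without restricting to lines.
\item For (3), your reduction to (4) is valid, but the ``Baire class one'' statement is Lebesgue's theorem for two variables. The standard $n$-variable version gives Baire class at most $n-1$, which is still enough for Borel measurability. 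You can also bypass this entirely. For $h_1=se_i$, separate continuity makes $s\mapsto\lambda_d(se_i,h_2,\dots,h_d)$ additive and continuous near $0$, hence linear. Additivity in $h_1$ then gives $\R$-linearity in the first slot, and symmetry gives it in all slots.
\end{itemize}
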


	\subsection{Polytopes}
		For a subset $A\subset\R^n$, we denote by $\conv(A)$ its convex hull. By definition, a polytope $P$ is the convex hull of a finite subset in $\R^n$ and thus in particular a convex body. We consider the space $\calP^n$ of all convex polytopes as a subspace of the space of  $\mathcal{K}^n$. In particular, it is a metric space with respect to the Hausdorff metric. In order to simplify the notation, we will sometimes write $[x_1,\dots,x_k]:=\conv(\{x_1,\dots,x_k\})$ for $x_1,\dots,x_k\in \R^n$. In particular, $[x_1,x_2]$ is a line segment if $x_1\ne x_2$. If $E\subset\R^n$ is an affine subspace, we denote by $\calP(E)\subset \calP^n$ the subspace of polytopes contained in $E$.\\
		
		Since we will be interested in simple valuations, we will use the following notation: If $P, P_1,\dots P_k\in\calP^n$ are polytopes such that $P=\bigcup_{i=1}^k P_i$ and such that $P_i\cap P_j$ is a lower dimensional polytope (or the empty set) for all $1\le i<j\le k$, then we write
		\begin{align}
			\label{eq:polytopeTriangulation}
			P=P_1\sqcup\dots\sqcup P_k=\bigsqcup_{i=1}^k P_i.
		\end{align}
		We will  use this mostly for the following decomposition of simplices. By definition, for $0\le k\le n$, a $k$-dimensional simplex $S$ is the convex hull of $k+1$ affine independent points $p_0,\dots,p_{k+1}$. We set $x_0=p_0$ and $x_i:=p_i-p_{i-1}$ for $1\le i\le k$. Then it is easy to see that
		\begin{align*}
			S=\left\{ x_0+\sum_{i=1}^n r_i x_i : 1\ge r_1\ge \dots\ge r_n\ge 0\right\},
		\end{align*}
		see for example \cite{LudwigMussnigValuationsConvexBodies2023}*{Lemma~4.1}. Following \cite{LudwigMussnigValuationsConvexBodies2023}, we write $S=\langle x_0;x_1,\dots,x_k\rangle$ in this case. The following result is known as the Canonical Simplex Decomposition \cite{HadwigerVorlesungenuberInhalt1957}*{Section 1.2.6}. We refer to \cite{AleskerIntroductiontheoryvaluations2018}*{Theorem~1.1.} and \cite{LudwigMussnigValuationsConvexBodies2023}*{Theorem~4.2} for alternative proofs.
		\begin{theorem}
			\label{theorem:CanonicalSimplexDecomp}
			Let $S=\langle x_0;x_1,\dots,x_n\rangle$ be an $n$-dimensional simplex. Defining $\underline{S}_0:=\{x_0\}$, $\overline{S}_{n}:=\{x_0+\dots+x_n\}$,
			\begin{align*}
				\underline{S}_k:=\langle x_0;x_1,\dots,x_k\rangle, \quad\text{and}\quad 	\overline{S}_{n-k}:=\left\langle x_0+\sum_{i=1}^kx_i;x_{k+1},\dots,x_n\right\rangle
			\end{align*}
			for $1\le k\le n$, we have
			\begin{align*}
				S=\bigsqcup_{k=0}^n \left((1-t)\underline{S}_k+t\overline{S}_{n-k}\right)
			\end{align*}
			for $0<t<1$.
		\end{theorem}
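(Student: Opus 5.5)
The plan is to prove the Canonical Simplex Decomposition by working in the coordinates $r=(r_1,\dots,r_n)$ of the representation $S=\{x_0+\sum_i r_ix_i: 1\ge r_1\ge\dots\ge r_n\ge 0\}$. Since $x_1,\dots,x_n$ are linearly independent, the map $r\mapsto x_0+\sum_i r_ix_i$ is an affine isomorphism. It therefore suffices to work with the standard order simplex $\Delta=\{1\ge r_1\ge\dots\ge r_n\ge0\}$ and to identify each piece $(1-t)\underline{S}_k+t\overline{S}_{n-k}$ with a subset of $\Delta$.

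First compute the pieces explicitly. A point of $(1-t)\underline{S}_k$ is $(1-t)x_0+\sum_{i\le k}(1-t)s_ix_i$ with $1\ge s_1\ge\dots\ge s_k\ge0$. A point of $t\overline{S}_{n-k}$ is $tx_0+\sum_{i\le k}tx_i+\sum_{i>k}tu_ix_i$ with $1\ge u_{k+1}\ge\dots\ge u_n\ge0$. Adding them, the piece $Q_k$ consists of the points with coordinates
\begin{align*}
r_i=t+(1-t)s_i\ (i\le k),\qquad r_i=tu_i\ (i>k).
\end{align*}
Equivalently,
\begin{align*}
Q_k=\{r: 1\ge r_1\ge\dots\ge r_k\ge t\ge r_{k+1}\ge\dots\ge r_n\ge0\}.
\end{align*}
This is the product of a scaled $k$-dimensional order simplex and an $(n-k)$-dimensional one. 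It is $n$-dimensional, convex, and clearly contained in $\Delta$; the boundary cases $k=0$ and $k=n$ are consistent with $\underline S_0,\overline S_n$ being points.

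Next, show that the $Q_k$ cover $\Delta$ with disjoint interiors. Given $r\in\Delta$, let $k$ be the number of indices with $r_i\ge t$. Since $r$ is decreasing, these are exactly the indices $1,\dots,k$, so $r\in Q_k$; this gives the cover. For $k<l$, any $r\in Q_k\cap Q_l$ satisfies $r_{k+1}\le t$ and $r_{k+1}\ge r_l\ge t$, hence $r_{k+1}=t$. Thus $Q_k\cap Q_l$ lies in the hyperplane $\{r_{k+1}=t\}$ and is lower dimensional. Transporting back via the affine isomorphism gives $S=\bigsqcup_k((1-t)\underline S_k+t\overline S_{n-k})$ in the sense of \eqref{eq:polytopeTriangulation}.

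I do not expect a real obstacle. The only care needed is the Minkowski-sum computation: one has to check that $(1-t)\underline S_k+t\overline S_{n-k}$ is exactly the product set described above and not merely contained in it. This holds because the two summands use disjoint sets of coordinate directions, $x_1,\dots,x_k$ versus $x_{k+1},\dots,x_n$, so the parameters $s$ and $u$ range independently and surjectively onto the two coordinate blocks.
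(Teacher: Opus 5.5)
Your argument is correct and follows the standard route. The paper does not prove this statement itself but cites Hadwiger, Alesker and Ludwig--Mussnig. Your proof works in the coordinates of the order-simplex description of $S$ (Ludwig--Mussnig, Lemma~4.1, quoted just before the statement): you identify the $k$-th piece with $\{1\ge r_1\ge\dots\ge r_k\ge t\ge r_{k+1}\ge\dots\ge r_n\ge 0\}$, cover $\Delta$ by counting the coordinates that are $\ge t$, and place pairwise intersections in the hyperplanes $\{r_{k+1}=t\}$. This is the natural proof.

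One correction concerns your remark that the boundary cases are consistent with $\overline{S}_n$ being a point; that remark is wrong. Your piece $Q_0=\{t\ge r_1\ge\dots\ge r_n\ge 0\}$ is what covers the points with $r_1<t$, and it requires $\overline{S}_n=\langle x_0;x_1,\dots,x_n\rangle=S$. The point is $\overline{S}_0=\{x_0+\dots+x_n\}$, which the general formula already gives at $k=n$. So the statement's $\overline{S}_n:=\{x_0+\dots+x_n\}$ is a typo for $\overline{S}_0$. Read literally, the $k=0$ piece would be a single point and the identity would fail.
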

		\begin{remark}\label{remark:simplexDecompOrthogonal}
			If $S=\langle x_0;x_1,\dots,x_n\rangle$ is an orthogonal simplex, i.e. if $x_1,\dots,x_n$ are mutually orthogonal, then $\underline{S}_k$ and $\overline{S}_{n-k}$ belong to orthogonal affine subspaces for $1\le k\le n-1$. 
		\end{remark}
		Note further that any $n$-dimensional polytope admits a decomposition of the form in Eq.~\eqref{eq:polytopeTriangulation} with simplices $P_i$ whose pairwise intersection is again a lower dimensional simplex, i.e. every polytope admits a triangulation.
	
	\subsection{Representation theory of $\SO(n)$}
		The proof of \autoref{maintheorem:SteinerPoint} relies on some simple representations theoretic results for the compact Lie group $\SO(n)$. Since the results hold for arbitrary compact Lie groups, we state them in greater generality. For an introduction to representations of compact Lie groups, we refer to \cite{SepanskiCompactLiegroups2007}.\\
		
		Let $(E,\pi)$ be a continuous representation of a compact Lie group $G$ on a topological vector space $E$ over $\C$, i.e. such that the map
		\begin{align*}
			G\times E&\rightarrow E\\
			(g,v)&\mapsto \pi(g)v
		\end{align*}
		is continuous. Recall that any continuous and irreducible representation of a compact Lie group is finite dimensional. We denote by  $\widehat{G}$ the set of all equivalence classes of isomorphic irreducible representations, and for an irreducible representation $(E_\pi,\pi)$, we denote by $[\pi]$ the equivalence class of all irreducible representations of $G$ isomorphic to $E_\pi$. \\
		A vector $v\in E$ is called a $G$-finite vector if 
		\begin{align*}
			\mathrm{span}\{\pi(g)v:g\in G\}
		\end{align*}
		is a finite dimensional subspace. We will denote the space of $G$-finite vectors by $E_{G-\mathrm{fin}}$. Then the $G$-finite vectors in $C(G)$ with respect to the left or right action of $G$ coincide, compare \cite{SepanskiCompactLiegroups2007}*{Theorem 3.21}. These actions are defined respectively by
		\begin{align*}
			&L_gf (h)=f(g^{-1}h), &&R_gf (h)=f(hg), \quad g,h\in G
		\end{align*}
		for $f\in C(G)$. The following result is called the Canonical Decomposition of $C(G)_{G-\mathrm{fin}}$.
		\begin{theorem}\label{thm:CanonicalDecomposition}
			Let $(g_1,g_2)\in G\times G$ act on $C(G)$ by $R_{g_1}\circ L_{g_2}$. Then we have a $G\times G$-equivariant decomposition
			\begin{align*}
				C(G)_{G-\mathrm{fin}}\cong \bigoplus_{[\pi]\in \widehat{G}} E_\pi^*\otimes E_\pi,
			\end{align*}
			where the last isomorphism is induced by
			\begin{align*}
				E_\pi^*\otimes E_\pi \ni \phi\otimes x\mapsto \left[g\mapsto \phi(\pi(g^{-1})x)\right]\in C(G).
			\end{align*}
		\end{theorem}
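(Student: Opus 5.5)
The plan is to use the map from the statement directly. Define
\begin{align*}
\Phi:\bigoplus_{[\pi]\in\widehat{G}} E_\pi^*\otimes E_\pi\rightarrow C(G),\qquad \Phi(\phi\otimes x)(g)=\phi(\pi(g^{-1})x),
\end{align*}
extended linearly, with one fixed representative $E_\pi$ chosen per class. I will show in turn that $\Phi$ is well defined and equivariant, that its image consists of $G$-finite vectors, that it is injective, and that it is surjective onto $C(G)_{G-\mathrm{fin}}$.

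\textbf{Well-definedness and equivariance.} The expression $\phi(\pi(g^{-1})x)$ is bilinear in $(\phi,x)$ and continuous in $g$, so $\Phi$ is well defined. For equivariance I compute
\begin{align*}
(R_{g_1}L_{g_2}\Phi(\phi\otimes x))(h)=\phi(\pi(g_1^{-1}h^{-1}g_2)x)=(\pi^*(g_1)\phi)\bigl(\pi(h^{-1})\pi(g_2)x\bigr).
\end{align*}
Hence $\Phi$ intertwines $R_{g_1}\circ L_{g_2}$ with the action $\pi^*(g_1)\otimes\pi(g_2)$ of $G\times G$ on $E_\pi^*\otimes E_\pi$. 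In particular, the translates of any $\Phi(\phi\otimes x)$ lie in the finite dimensional space $\Phi(E_\pi^*\otimes E_\pi)$. So the image of $\Phi$ is contained in $C(G)_{G-\mathrm{fin}}$.

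\textbf{Surjectivity.} Let $f$ be $G$-finite. By the cited fact that left- and right-finiteness coincide, I may use left translates. Put $V=\mathrm{span}\{L_gf:g\in G\}$; this is finite dimensional and $L$-invariant. Averaging any inner product over $G$ gives an invariant one, so $V$ is an orthogonal direct sum of irreducible subrepresentations $V_i\cong E_{\pi_i}$. Accordingly $f=\sum_i f_i$ with $f_i\in V_i$. Let $\ell_i\in V_i^*$ be evaluation at the identity. Then
\begin{align*}
f_i(h)=(L_{h^{-1}}f_i)(e)=\ell_i\bigl(L_{h^{-1}}f_i\bigr).
\end{align*}
Transporting $\ell_i$ and $f_i$ to $E_{\pi_i}^*$ and $E_{\pi_i}$ through the isomorphism $V_i\cong E_{\pi_i}$, this says exactly $f_i=\Phi(\ell_i\otimes f_i)$. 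Summing over $i$ shows $f$ lies in the image of $\Phi$.

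\textbf{Injectivity.} This is the step I expect to be the main obstacle. I would use the Schur orthogonality relations in $L^2(G)$ with respect to Haar measure.
\begin{itemize}
\item Matrix coefficients of inequivalent irreducible representations are orthogonal. Hence the images $\Phi(E_\pi^*\otimes E_\pi)$ for distinct classes $[\pi]$ are mutually orthogonal, so the sum of the images is direct.
\item For a fixed $\pi$, equip $E_\pi$ with an invariant inner product and take an orthonormal basis $e_j$ with dual basis $e_j^*$. The orthogonality relation
\begin{align*}
\int_G \langle\pi(g)e_j,e_k\rangle\overline{\langle\pi(g)e_l,e_m\rangle}\,dg=\frac{\delta_{jl}\delta_{km}}{\dim E_\pi}
\end{align*}
shows that the $(\dim E_\pi)^2$ functions $\Phi(e_k^*\otimes e_j)$ are orthogonal and nonzero, hence linearly independent. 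So $\Phi$ is injective on $E_\pi^*\otimes E_\pi$.
\end{itemize}

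Alternatively, for the second point one can note that $E_\pi^*\otimes E_\pi$ is an irreducible $G\times G$-representation. Then $\Phi$ restricted to it is a nonzero intertwiner, hence injective by Schur's lemma.

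Combining the two points, $\Phi$ is injective. Together with surjectivity and equivariance, $\Phi$ is the claimed $G\times G$-equivariant isomorphism.
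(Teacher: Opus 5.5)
Your proof is correct. The paper gives no proof of this statement; it quotes it as background from Sepanski's book, together with the fact that left- and right-$G$-finite vectors coincide. Your argument is essentially the standard textbook proof:
\begin{itemize}
\item the matrix-coefficient map $\Phi$ is equivariant and lands in finite-dimensional invariant subspaces;
\item surjectivity follows by decomposing the span $V$ of left translates into irreducibles and writing $f_i(h)=\ell_i(L_{h^{-1}}f_i)$ with $\ell_i$ evaluation at $e$;
\item injectivity follows from Schur orthogonality, or from Schur's lemma applied to the irreducible $G\times G$-module $E_\pi^*\otimes E_\pi$.
\end{itemize}

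A few minor points you could make explicit.

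First, the averaging that produces an invariant inner product on $V$ needs the left action on $V$ to be continuous. This holds because $g\mapsto L_gf$ is continuous into $C(G)$ with the sup norm, by uniform continuity of $f$ on the compact group $G$.

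Second, $\Phi(e_k^*\otimes e_j)(g)=\langle\pi(g^{-1})e_j,e_k\rangle=\overline{\langle\pi(g)e_k,e_j\rangle}$. The orthogonality relation you quote applies after this rewrite, or after the substitution $g\mapsto g^{-1}$, which preserves Haar measure. The same remark justifies applying orthogonality across inequivalent classes.

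Third, your surjectivity argument places every left-finite $f$ in the image of $\Phi$. By your equivariance computation, that image consists of $G\times G$-finite functions. So you actually recover the coincidence of left- and right-finiteness rather than needing to cite it.
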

		In the cases we will be interested in, we will only have the action of the second factor, i.e. we will consider $C(G)$ as a representation of $G$ with respect to left multiplication. More precisely, we will consider $G=\SO(n)$, a finite dimensional representation $(E,\pi_E)$ of $\SO(n)$ and the spaces $C(\SO(n),E)\cong C(\SO(n))\otimes E$. Consider the spaces $C(\SO(n),E)^{\SO(n)}$ of $\SO(n)$-invariant elements, i.e. all continuous functions $f:\SO(n)\rightarrow E$ satisfying $\pi_E(h)f(h^{-1}g)=f(g)$ for $h,g\in \SO(n)$. Note that this is equivalent to $f(hg)=\pi_E(h)f(g)$. The following is a simple consequence, however, we include the argument for the convenience of the reader.
		\begin{corollary}\label{corollary:canonicalDecompTensor}
			If $E$ is a finite dimensional representation of $\SO(n)$, then  
			\begin{align*}
				C(\SO(n),E)^{\SO(n)}\cong \bigoplus_{[\pi]\in\widehat{\SO(n)}}E_\pi^*\otimes (E_\pi\otimes E)^{\SO(n)},
			\end{align*}
			where the sum contains only finitely many nontrivial terms.
		\end{corollary}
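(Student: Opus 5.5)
We prove the isomorphism by tensoring the Canonical Decomposition (\autoref{thm:CanonicalDecomposition}) with $E$ and then taking $\SO(n)$-invariants. Write $G=\SO(n)$.

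First we pass from continuous to $G$-finite functions. Let $f\in C(G,E)^{G}$. The invariance condition $f(hg)=\pi_E(h)f(g)$ shows that $f$ is determined by $f(e)$, via $f(g)=\pi_E(g)f(e)$. Conversely, for each $v\in E$ the function $g\mapsto \pi_E(g)v$ is continuous and invariant. Hence $C(G,E)^G\cong E$ is finite dimensional. Its elements are also $G$-finite under left translation: every coordinate of $f$ is a matrix coefficient of $E$, so $f\in C(G)_{G-\mathrm{fin}}\otimes E$. This identity holds because $C(G,E)\cong C(G)\otimes E$ for finite dimensional $E$, and $G$-finiteness can be checked coordinatewise. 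Therefore
\begin{align*}
C(G,E)^G=\left(C(G)_{G-\mathrm{fin}}\otimes E\right)^G .
\end{align*}

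Next we apply \autoref{thm:CanonicalDecomposition}, keeping only the left-multiplication action, which acts on the second factor $E_\pi$. This gives a $G$-equivariant isomorphism
\begin{align*}
C(G)_{G-\mathrm{fin}}\otimes E\cong \bigoplus_{[\pi]} E_\pi^*\otimes (E_\pi\otimes E),
\end{align*}
in which $G$ acts trivially on $E_\pi^*$ and diagonally on $E_\pi\otimes E$. Taking invariants commutes with direct sums, because the invariant projection (averaging with respect to Haar measure) preserves each summand. It also commutes with tensoring by a trivial factor. This yields the claimed decomposition.

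Finally we show that only finitely many terms are nonzero. We have $(E_\pi\otimes E)^G\cong \mathrm{Hom}_G(E^*,E_\pi)$. Decompose the finite dimensional space $E^*$ into finitely many irreducible summands. By Schur's Lemma this Hom space vanishes unless $[\pi]$ is the class of one of these summands, so only finitely many $[\pi]$ contribute. This also matches the finite dimensionality of the left-hand side.

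The only step needing care is the first: justifying that invariants of the continuous function space lie inside the $G$-finite part, so that the algebraic direct sum suffices. The explicit formula $f(g)=\pi_E(g)f(e)$ settles this point.
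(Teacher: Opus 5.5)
Your proof is correct and follows essentially the same route as the paper: restrict to $G$-finite vectors, tensor the Canonical Decomposition with $E$, take invariants, and use Schur's Lemma through $(E_\pi\otimes E)^{G}\cong \mathrm{Hom}_G(E^*,E_\pi)$, where the paper uses the equivalent $\mathrm{Hom}_G(E_\pi^*,E)$. The only difference is cosmetic. You show that invariant elements are $G$-finite via the explicit formula $f(g)=\pi_E(g)f(e)$, which also gives $C(G,E)^G\cong E$. The paper instead notes that an invariant vector is trivially $G$-finite, and that the $G$-finite vectors of $C(G)\otimes E$ are exactly $C(G)_{G-\mathrm{fin}}\otimes E$.
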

		\begin{proof}
			Since $E$ is finite dimensional, the $\SO(n)$-finite vectors in $C(\SO(n), E)\cong C(\SO(n))\otimes E$ coincide with $C(\SO(n))^{\SO(n)-\mathrm{fin}}\otimes E$. Using the decomposition in \autoref{thm:CanonicalDecomposition}, we obtain an $\SO(n)$-equivariant isomorphism
			\begin{align*}
				C(\SO(n),E)_{\SO(n)-\mathrm{fin}}\cong \bigoplus_{[\pi]\in\widehat{\SO(n)}}E_\pi^*\otimes (E_\pi\otimes E),
			\end{align*}
			where $\SO(n)$ acts trivially on the first factor on the right. Since any $\SO(n)$-invariant element in $C(\SO(n),E)$ is $\SO(n)$-finite by definition, this provides the desired isomorphism by passing to the $\SO(n)$-invariant elements. For the last claim, note that
			\begin{align*}
				(E_\pi\otimes E)^{\SO(n)}\cong\mathrm{Hom}_{\SO(n)}(E^{*}_\pi,E)
			\end{align*}
			is isomorphic to the space of $\SO(n)$-equivariant homomorphisms from $E^*_\pi$ to $E$. Since $E$ is finite dimensional, Schur's Lemma implies that this space is nontrivial for only a finite number of representations $[\pi]\in \widehat{\SO(n)}$. 	
		\end{proof}
		
\section{Valuations on polytopes}
	\label{section:valuationsPolytopes}
	
	As a general background on valuations on polytopes and convex bodies, we refer to \cite{SchneiderConvexbodiesBrunn2014}*{Section 6}. The results of the next subsection are minor modifications of well-known results, which we include with proofs in order to translate the results to the specific case of measurable valuations.\\ 
	
	Several of our arguments rely on the following implication of the valuation property for polytopes, see	\cite{SchneiderConvexbodiesBrunn2014}*{Theorem~6.2.3}.
	\begin{theorem}\label{theorem:InclusionExclusionPrinciple}
		Every valuation $\varphi:\calP^n\rightarrow E$ satisfies the Inclusion-Exclusion Principle: If $P_i\in \calP^n$, $1\le i\le N$, are polytopes such that $\bigcup_{i=1}^N P_i\in \calP^n$, then
		\begin{align*}
			\varphi\left(\bigcup_{i=1}^NP_i\right)=\sum_{r=1}^N(-1)^{r-1}\sum_{1\le i_1\le \dots\le i_r\le N}\varphi\left(\bigcap_{j=1}^rP_{i_j}\right)
		\end{align*}
	\end{theorem}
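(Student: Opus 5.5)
We prove the Inclusion-Exclusion Principle (\autoref{theorem:InclusionExclusionPrinciple}) by induction on $N$. Note first that the index condition should be read as $i_1<\dots<i_r$ (strict), as usual. The subtlety is that the union $P_1\cup\dots\cup P_{N-1}$ need not be convex, so the naive induction via the two-set valuation property does not directly apply. The standard remedy, which I would follow (as in Schneider's book), is to first extend $\varphi$ to an additive functional on the lattice of finite unions of polytopes, or equivalently to prove the statement for a suitable refinement and then collapse.

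\textbf{Step 1: reduction to a cell decomposition.} Let $Q:=\bigcup_i P_i\in\calP^n$. Each $P_i$ is an intersection of finitely many closed halfspaces. Collect all hyperplanes bounding the $P_i$, and consider the resulting arrangement restricted to $Q$. Its closed cells, together with all their faces, form a polyhedral complex $\mathcal{C}$ in which each $P_i$ and each intersection $\bigcap_j P_{i_j}$ is a union of cells. For a polytope $R$ that is a union of cells of $\mathcal{C}$, one shows by a secondary induction (on the number of top-dimensional cells, cutting successively by hyperplanes of the arrangement) that
\[
\varphi(R)=\sum_{C\in\mathcal{C},\,C\subset R}\mu(C)\varphi(C)
\]
with integer weights $\mu(C)$ depending only on the face poset (Euler-characteristic-type Möbius weights, $\mu(C)=\sum_{C\subset D\subset R}(-1)^{\dim D-\dim C}$ over cells $D\subset R$). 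The key move is this: if $H$ is a hyperplane of the arrangement and $R$ a convex union of cells, then $R=(R\cap H^+)\cup(R\cap H^-)$ with $(R\cap H^+)\cap(R\cap H^-)=R\cap H$, all convex. Hence the two-set valuation property applies, and iterating over all hyperplanes expresses $\varphi(R)$ through the cells.

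\textbf{Step 2: comparison of both sides.} Substituting the cell expansion into both sides of the claimed identity reduces it to a combinatorial identity for each cell $C$. Specifically, the coefficient of $\varphi(C)$ on the right is $\sum_{r}(-1)^{r-1}\sum_{i_1<\dots<i_r}\mu_{I}(C)$, where $\mu_I$ is the weight relative to $\bigcap_{j}P_{i_j}$. Writing $\mu$ as an alternating sum over cells $D\supset C$, the identity becomes
\[
\sum_{\emptyset\ne I\subset\{i:D\subset P_i\}}(-1)^{|I|-1}=1
\]
for each cell $D\subset Q$. This holds because the set $\{i:D\subset P_i\}$ is nonempty: every cell of $Q$ lies in some $P_i$, since cells are not split by boundaries of the $P_i$.

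\textbf{Main obstacle.} The delicate point is Step 1: establishing that the cell expansion is well defined, i.e. independent of the order of cutting, for arbitrary convex unions of cells. This is a consequence of the Euler relation on each convex polytope. I would handle it by proving the expansion only for convex $R$ via induction on the number of hyperplanes meeting the interior of $R$, using that the Möbius weights for $R\cap H^\pm$ and $R\cap H$ combine exactly to those of $R$. No regularity of $\varphi$ is needed.
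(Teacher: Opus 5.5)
Your argument is correct, and you are right that the indices must be strictly increasing: with $i_1\le\dots\le i_r$ the formula already fails for $N=2$ and $P_1=P_2$.

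The paper does not prove this statement; it only cites Schneider's book, so there is no internal argument to compare with. A standard alternative is a double induction on the dimension and on $N$. One cuts the union along the facet hyperplanes of one polytope; on the far side of such a hyperplane, that polytope contributes only a lower-dimensional piece, which the induction hypothesis in dimension $n-1$ absorbs.

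Your route is different. You fix a single arrangement adapted to all $P_i$ at once. Using only the two-set valuation property, you show $\varphi(R)=\sum_{C\subset R}\mu_R(C)\varphi(C)$ for every convex union $R$ of cells. Everything then reduces to the identity $\sum_{\emptyset\ne I\subset J}(-1)^{|I|-1}=1$ for $J\neq\emptyset$. This needs no induction on $N$ or on the dimension, and it gives more. The relative interiors of the cells partition $Q$, so the cell indicators are linearly independent. Hence the cell expansion shows directly that $\varphi$ factors through indicator functions, i.e.\ the extension in \autoref{corollary:Groemer}. The price is some polyhedral combinatorics: you need a genuine cell complex and the Euler relation.

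Step~1 should be tightened in three places.

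\emph{The step you call the main obstacle is easier than you suggest.} Set $\mu_{R'}(C):=0$ for $C\not\subset R'$. Then $\mu_R=\mu_{R\cap H^+}+\mu_{R\cap H^-}-\mu_{R\cap H}$ holds term by term in the sum over $D$, because every cell $D\subset R$ lies in $H^+$ or in $H^-$, and in both exactly when $D\subset H$. Since $\mu_R$ is defined intrinsically, there is no order-independence to check. The Euler relation is needed only in the base case: there $R$ is a single cell, the cells inside $R$ are exactly its faces, and so $\sum_{C\subset D\subset R}(-1)^{\dim D-\dim C}=\delta_{C,R}$.

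\emph{The induction parameter must count cutting hyperplanes, not hyperplanes meeting the interior.} It should be the number of arrangement hyperplanes with points of $R$ strictly on both sides. If you use the literal interior, the lower-dimensional pieces $R\cap H$ fall into the base case without being single cells.

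\emph{The arrangement needs a few extra hyperplanes.} Include the facet hyperplanes of $Q$, and hyperplanes cutting out the affine hulls of lower-dimensional $P_i$. Then the cells are exactly the closed faces of the arrangement inside $Q$, and each $P_I$ is an intersection of the arrangement's halfspaces. Also, each cell has a constant sign vector on its relative interior, so it lies in some $P_i$. Finally, set $\varphi(\emptyset)=0$ for empty intersections.
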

	Note that the corresponding result does not hold for general valuations on convex bodies, however, it does hold for continuous valuations, see \cite{KlainRotaIntroductiongeometricprobability1997}*{Theorem~5.1.1}. The Inclusion-Exclusion Principle admits the following more algebraic interpretation: Consider the space $Z(\R^n)$ of all functions $\alpha:\R^n\rightarrow \mathbb{Z}$ that $\mathbb{Z}$-linear combinations of indicator functions of polytopes in $\calP^n$. Groemer's Extension Theorem (see \cite{Groemerextensionadditivefunctionals1978} or \cite{KlainRotaIntroductiongeometricprobability1997}*{Theorem 2.2.1}) implies the following
	\begin{corollary}
		\label{corollary:Groemer}
		Let $E$ be an abelian group. Every valuation $\varphi:\calP^n\rightarrow E$ induces a unique well-defined $\mathbb{Z}$-linear map $\tilde{\varphi}:Z(\R^n)\rightarrow E$ satisfying $\tilde{\varphi}(1_P)=\varphi(P)$ for $P\in\mathcal{P}^n$.
	\end{corollary}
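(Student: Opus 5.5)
The plan is to reduce the statement to Groemer's Extension Theorem, which the paper cites just before the corollary. Groemer's theorem states that a valuation on an intersectional family that satisfies the Inclusion-Exclusion Principle extends uniquely to an additive map on the generated lattice of indicator functions. By \autoref{theorem:InclusionExclusionPrinciple}, every valuation $\varphi:\calP^n\rightarrow E$ satisfies this principle. Moreover, $\calP^n\cup\{\emptyset\}$ is closed under finite intersections (we set $\varphi(\emptyset)=0$). So the hypotheses of Groemer's theorem hold, and only the concrete extension needs to be constructed.

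The first step is to define $\tilde{\varphi}$ on an arbitrary $\alpha=\sum_{i=1}^N m_i 1_{P_i}\in Z(\R^n)$ by
\begin{align*}
	\tilde{\varphi}(\alpha):=\sum_{i=1}^N m_i\varphi(P_i).
\end{align*}
Linearity and the normalization $\tilde{\varphi}(1_P)=\varphi(P)$ are then immediate. Uniqueness is also immediate, because the indicator functions $1_P$ generate $Z(\R^n)$ as an abelian group.

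The second step, and the only real issue, is well-definedness. It suffices to show that $\sum_i m_i 1_{P_i}=0$ implies $\sum_i m_i\varphi(P_i)=0$. To do this, consider the finitely many polytopes $P_i$ and all their intersections. The Boolean atoms cut out by the $P_i$ (regions of the form $\bigcap_{i\in I}P_i\setminus\bigcup_{j\notin I}P_j$) need not be polytopes. The standard argument therefore works with the intersection-closed family $\mathcal{F}$ generated by the $P_i$. One shows by induction on $|\mathcal{F}|$, ordered by inclusion, that any integer relation among the $1_Q$ with $Q\in\mathcal{F}$ is a consequence of the inclusion-exclusion identities
\begin{align*}
	1_{\bigcup_{k=1}^{r} Q_k}=\sum_{\emptyset\neq J\subseteq\{1,\dots,r\}}(-1)^{|J|-1}1_{\bigcap_{k\in J} Q_k}
\end{align*}
for unions that are themselves polytopes. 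Concretely, take a maximal $Q\in\mathcal{F}$ with a nonzero coefficient. Evaluating the relation at points of $Q$ lying outside all other members of $\mathcal{F}$ that are not contained in $Q$ forces the relation, restricted to the subfamily of members contained in $Q$, to express $1_Q$ via proper subsets. That in turn forces $Q$ to be the union of the maximal members of $\mathcal{F}$ properly contained in it. Substituting inclusion-exclusion for $1_Q$ lowers the complexity and finishes the induction. Since $\varphi$ satisfies the same inclusion-exclusion identities by \autoref{theorem:InclusionExclusionPrinciple}, it respects every such step, and so $\sum_i m_i\varphi(P_i)=0$.

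The main obstacle is making the combinatorial reduction in the second step rigorous. In practice I would not redo it and would instead cite Groemer's theorem as stated in the references. All that remains to verify is that its two hypotheses hold: intersection-closedness of $\calP^n\cup\{\emptyset\}$, which is clear, and the Inclusion-Exclusion Principle for $\varphi$, which is \autoref{theorem:InclusionExclusionPrinciple}. The target being an arbitrary abelian group causes no difficulty, since only $\mathbb{Z}$-linear combinations appear.
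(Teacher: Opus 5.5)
Your proposal is correct and matches the paper, which gives no further argument than invoking Groemer's Extension Theorem, with \autoref{theorem:InclusionExclusionPrinciple} supplying its hypothesis. One small slip in your optional sketch: the test points in $Q$ should avoid only the members \emph{with nonzero coefficient} that are not contained in $Q$, since a proper superset of $Q$ in $\mathcal{F}$ would otherwise leave no such points. Because you rely on the citation rather than the sketch, this does not affect correctness.
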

	
	\subsection{Polynomiality of translation invariant valuations}
		For $P,Q\in\calP^n$, let us denote their Minkowski sum by
		\begin{align*}
			P+Q=\{x+y:x\in P,y\in Q\}.
		\end{align*}
		This defines a continuous binary operation on $\calP^n$ and equips the space of polytopes with the structure of a semi-group with cancellation law. Moreover, translation invariant valuations satisfy the following remarkable polynomiality behavior with respect to Minkowski addition, as shown by McMullen.
		\begin{theorem}[\cite{McMullenValuationsEulertype1977}*{Theorem~6}]\label{theorem:PolynomialityMcMullen}
		Let $E$ be a $\Q$-vector space and $\varphi:\calP^n\rightarrow E$ a translation invariant valuation. Then for any $P_1,\dots,P_N\in\calP^n$ and rational $\lambda_1,\dots,\lambda_N\ge 0$, the map
		\begin{align*}
			(\lambda_1,\dots,\lambda_N)\mapsto \varphi\left(\sum_{j=1}^N\lambda_jP_j\right)
		\end{align*}
		is a polynomial of degree at most $n$. Moreover, as a function of $P_1,\dots,P_k$, the coefficient of $\lambda_1^{r_1}\dots\lambda_k^{r_k}$ is a translation invariant valuation and homogeneous of degree $r_i$  in each argument.
	\end{theorem}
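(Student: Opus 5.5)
The plan is to follow McMullen's original strategy: first prove polynomiality in a single parameter, then bootstrap to several parameters, and throughout work with the Groemer extension $\tilde\varphi$ on $Z(\R^n)$ from \autoref{corollary:Groemer}. All identities will be verified at the level of indicator functions, so the valuation property only enters through $\tilde\varphi$.

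\textbf{Step 1 (one parameter, one polytope).} Fix $P$ and show that $k\mapsto\varphi(kP)$, $k\in\mathbb{N}$, is a polynomial of degree at most $n$ in the sense of \autoref{def:polynomial}. Triangulate $P$ and use the Inclusion-Exclusion Principle (\autoref{theorem:InclusionExclusionPrinciple}); this reduces the claim to simplices and their faces. For a simplex $S$ (of dimension $d\le n$), the Canonical Simplex Decomposition (\autoref{theorem:CanonicalSimplexDecomp}) writes $kS$ as a union of translates of pieces of the form $\underline{S}_j+\overline{S}_{d-j}$. Here we use translation invariance. The lower dimensional overlaps are treated with inclusion-exclusion, inducting on dimension. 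Counting how many translates occur, one gets $\varphi(kS)$ as a combination of binomial coefficients $\binom{k+j}{j}$, $j\le d$, with coefficients that do not depend on $k$.

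An alternative for this step is McMullen's identity in $Z(\R^n)$. For a polytope $P$ of dimension $d$,
\begin{align*}
\sum_{j=0}^{d+1}(-1)^j\binom{d+1}{j}1_{(k+j)P}\equiv 0 \pmod{\text{translations}},
\end{align*}
so $\tilde\varphi$ kills the $(n+1)$-st finite difference.

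\textbf{Step 2 (several parameters, integers).} For $P_1,\dots,P_N$, consider the map $\mathbb{N}^N\ni(k_1,\dots,k_N)\mapsto\varphi(\sum k_iP_i)$. The polytope $\sum k_iP_i$ is a face of $Q=P_1\times\dots\times P_N\subset\R^{nN}$ projected linearly. I will instead use the standard mixed-polytope decomposition: dissect $\sum k_iP_i$ into translates of products of pieces of faces, in analogy to Step 1. The cleaner way is to show the finite difference identity \eqref{eq:polynomialabelianGroups} in the semigroup $(\calP^n,+)$. For this, prove that for all $P,Q_1,\dots,Q_{n+1}$,
\begin{align*}
\sum_{J\subset\{1,\dots,n+1\}}(-1)^{|J|}\varphi\Big(P+\sum_{j\in J}Q_j\Big)=0 .
\end{align*}
This is McMullen's key lemma. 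I would prove it by showing that the corresponding alternating sum of indicator functions is, modulo translations, a combination of indicators of polytopes whose dimension is forced to exceed $n$, hence zero. Concretely, I would induct on $n+1$ via the fact that each $Q_j$ may be taken to be a segment: triangulate and use a zonotope-type reduction, since the difference operator is additive in $Q_j$ on the class of polytopes after Step 1. Restricted to integer dilates, this yields polynomiality of total degree $\le n$.

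\textbf{Step 3 (rationals and coefficients).} Rational $\lambda_i\ge0$ are handled by a common denominator $m$: write $\lambda_i=k_i/m$ and apply Step 2 to $P_i/m$. Polynomials agreeing on $\frac1{m}\mathbb{N}^N$ for all $m$ are consistent by uniqueness of the decomposition (Vandermonde, as in the corollaries of \autoref{section:polynomiality}). The coefficients are obtained by inverting a Vandermonde system, so each is a finite $\Q$-linear combination of values $\varphi(\sum q_jP_j)$. For fixed $q_j$ each such value is a translation invariant valuation in each $P_i$, since Minkowski addition with a fixed polytope preserves unions and intersections of convex polytopes whose union is convex. Homogeneity of degree $r_i$ follows by comparing coefficients after substituting $\lambda_iP_i=(\lambda_i t)(P_i/t)$.

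The main obstacle is Step 2, the vanishing of the $(n+1)$-st mixed difference. I would attack it first in the special case where all $Q_j$ are segments, using the cylinder decomposition $P+[0,v]=P\cup(\text{prism over the shadow boundary})$. Translation invariance then shows that each difference lowers the dimension of the relevant pieces by one. I would then reduce general $Q_j$ to segments by triangulating $Q_j$ and applying Step 1 to simplices.
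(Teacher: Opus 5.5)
The paper does not prove this statement; it quotes it from McMullen. So your proposal has to stand on its own, and it has a genuine gap at exactly the step you flag as the main obstacle. Write $D_Q\psi:=\psi(\cdot+Q)-\psi$.

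Your segment case is sound in principle. Let $\pi$ be the orthogonal projection onto $v^\perp$. Over each $y\in\pi P$, the set $(P+[0,v])\setminus P$ has a half-open fibre of length $|v|$ in direction $v$, and a cut-and-translate computation in $Z(\R^n)$ straightens it to give $\varphi(P+[0,v])-\varphi(P)=\varphi(\pi P+[0,v])-\varphi(\pi P)$. So one segment difference turns $\varphi$ into a translation invariant valuation on $\calP(v^\perp)$, and induction on $n$ kills the remaining $n$ differences for arbitrary other $Q_j$.

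The gap is the passage from general $Q_j$ to segments. The alternating sum is a translation invariant valuation in each $Q_j$, so triangulating reduces you to simplices. Since $D_{Q+R}=D_Q+D_R+D_QD_R$, the segment case propagates to zonotopes. Nothing in your sketch connects these two classes. Simplices are not zonotopes, and a translation invariant valuation in $Q$ is not determined by its values on zonotopes: in the plane, $Q\mapsto\int_{S^1}f\,dS_1(Q,\cdot)$ with $f$ odd vanishes on all zonotopes but not on triangles. Applying Step 1 to simplices, i.e.\ to $Q\mapsto\varphi(P+Q)$, only gives the diagonal identity $D_T^{n+1}\varphi=0$ with all $Q_j$ equal to one $T$, not the mixed identity you need.

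The mixed identity is in fact unnecessary. Apply Step 1, in its rational form from Step 3 and valid for every translation invariant valuation, to $\psi(Q):=\varphi(Q+R)$ with $R=\sum_{i\ne i_0}\lambda_iP_i$. This $\psi$ is a translation invariant valuation by the observation you already make in Step 3. Hence $\lambda\mapsto\varphi(\sum_i\lambda_iP_i)$ is a polynomial of degree at most $n$ in each $\lambda_{i_0}\in\Q_{\ge0}$ separately. Lagrange interpolation in one variable at a time then makes it a polynomial in all variables jointly. For each fixed $\mu\in\Q^N_{\ge0}$, the map $t\mapsto\varphi(t\sum_i\mu_iP_i)$ has degree at most $n$. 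Therefore every homogeneous component of degree $>n$ vanishes on $\Q^N_{\ge0}$ and is zero. If you want the mixed identity anyway, it follows from the diagonal one by polarizing the operators $\log(1+D_Q)=\sum_{r=1}^n\frac{(-1)^{r-1}}{r}D_Q^r$, which are additive in $Q$; segments play no role.

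Step 1 also needs repair. The Canonical Simplex Decomposition writes $(a+b)S$ as the union of the pieces $a\underline S_j+b\overline S_{d-j}$. These are not translates of finitely many fixed pieces as $k=a+b$ varies, so counting translates does not work as stated. Use $kS=\bigsqcup_{j=0}^d(\underline S_j+(k-1)\overline S_{d-j})$ instead. Formulate the induction hypothesis on dimension for all translation invariant valuations, and apply it to $Q\mapsto\varphi(F+Q)$ on the lower dimensional pieces and overlaps. This shows that $\varphi(kS)-\varphi((k-1)S)$ has degree at most $d-1$ in $k$.
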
	
	Let us remark that this result holds in greater generality for so-called polynomial valuations, as shown by Khovanski{\u i} and Pukhlikov \cite{PukhlikovKhovanskiiFinitelyadditivemeasures1992}. In fact, their results show that any translation invariant valuation on $\calP^n$ induces a polynomial of degree at most $n$ in the sense of \autoref{def:polynomial} on the group of virtual polyhedra, which is naturally isomorphic to the Grothendieck group of $(\calP^n,+)$, see \cite{PukhlikovKhovanskiiFinitelyadditivemeasures1992}*{\S 6 Corollary 2}, and which they realize as a multiplicative subset of the ring $Z(\R^n)$ equipped with a multiplication induced by Minkowski addition. This interpretation will be relevant in the next section.\\
	For now, we discuss some well-known implications of \autoref{theorem:PolynomialityMcMullen}. Since the compatibility of these results with measurability is crucial for the further constructions, we include the necessary arguments.	
	\begin{corollary}
		Let $E$ be a $\Q$-vector space and $\varphi:\calP^n\rightarrow E$ a translation invariant valuation. Then there exist unique translation invariant valuations $\varphi_j:\calP^n\rightarrow\R$ rational homogeneous of degree $k$ for $0\le k\le n$ such that $\varphi=\sum_{k=0}^n\varphi_k$. More precisely, there exist constants $c_{ij}\in\Q$, $0\le i,j\le n$, independent of $\varphi$ and $P\in\calP^n$ such that
		\begin{align}
			\label{eq:homComponentVandermonde}
			\varphi_i(P)=\sum_{j=0}^d c_{ij}\varphi((j+1)P).
		\end{align}
	\end{corollary}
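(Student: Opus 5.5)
Apply \autoref{theorem:PolynomialityMcMullen} with $N=1$ and $P_1=P$: the map $\lambda\mapsto\varphi(\lambda P)$ for rational $\lambda\ge 0$ is a polynomial of degree at most $n$. This gives coefficients $\varphi_k(P)\in E$ with
\begin{align*}
	\varphi(\lambda P)=\sum_{k=0}^n\lambda^k\varphi_k(P)\quad\text{for all rational }\lambda\ge 0 .
\end{align*}
The theorem also tells us that each coefficient $\varphi_k$ is a translation invariant valuation. Setting $\lambda=1$ gives $\varphi=\sum_k\varphi_k$.

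For rational homogeneity, fix rational $\mu\ge 0$ and compare the expansions of $\varphi(\lambda\mu P)$ obtained in two ways. Expanding in $\lambda$ at the polytope $\mu P$ gives $\sum_k\lambda^k\varphi_k(\mu P)$. Expanding in $\lambda\mu$ at $P$ gives $\sum_k\lambda^k\mu^k\varphi_k(P)$. Both are polynomials in $\lambda$ agreeing on infinitely many rationals, and $E$ is a $\Q$-vector space, so coefficients can be compared. Hence $\varphi_k(\mu P)=\mu^k\varphi_k(P)$.

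For the explicit formula, evaluate the expansion at $\lambda=j+1$, $j=0,\dots,n$:
\begin{align*}
	\varphi((j+1)P)=\sum_{k=0}^n (j+1)^k\varphi_k(P).
\end{align*}
The matrix $V=((j+1)^k)_{j,k}$ is a Vandermonde matrix at the distinct nodes $1,\dots,n+1$, so it is invertible over $\Q$. Writing $(c_{ij})=V^{-1}$, we get $\varphi_i(P)=\sum_{j=0}^n c_{ij}\varphi((j+1)P)$, where the upper summation limit is $n$ (the $d$ in Eq.~\eqref{eq:homComponentVandermonde}). These constants depend only on $n$, not on $\varphi$ or $P$.

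For uniqueness, suppose $\varphi=\sum_k\psi_k$ with each $\psi_k$ rational homogeneous of degree $k$. Then $\varphi(\lambda P)=\sum_k\lambda^k\psi_k(P)$ for rational $\lambda\ge0$. Evaluating at $\lambda=1,\dots,n+1$ and inverting the same Vandermonde matrix forces $\psi_k=\varphi_k$. This argument does not need the $\psi_k$ to be valuations.

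Nothing here is a real obstacle once \autoref{theorem:PolynomialityMcMullen} is available. The one subtle point is the target space: the statement writes $\varphi_j:\calP^n\rightarrow\R$, but the construction naturally lands in $E$. The formula shows that each $\varphi_i$ is a fixed finite $\Q$-linear combination of the maps $P\mapsto\varphi((j+1)P)$. Since dilation $P\mapsto(j+1)P$ is continuous in the Hausdorff metric, the homogeneous components inherit measurability or continuity from $\varphi$ whenever $E$ is a topological vector space, e.g. $E=\R$. This is presumably the reason the explicit formula is recorded.
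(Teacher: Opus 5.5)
Your proof is correct and follows essentially the same route as the paper: you read off $\varphi_k(P)$ as the coefficients of $\lambda\mapsto\varphi(\lambda P)$ via McMullen's theorem, invert a rational Vandermonde matrix to get the constants $c_{ij}$, and compare coefficients of $\varphi(\lambda\mu P)$ to obtain rational homogeneity. The differences are harmless: you use the nodes $1,\dots,n+1$, which matches the displayed formula with $\varphi((j+1)P)$ better than the paper's evaluation at $t=0,\dots,n$, and you prove uniqueness explicitly where the paper leaves it implicit.
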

	\begin{proof}
		The numbers $\varphi_j(P)$ are obviously the coefficients of the polynomial $t\mapsto \varphi(tP)=\sum_{j=0}^nt^j\varphi_j(P)$ for rational $t\ge0$. If we plug in $t=0,\dots,n$, we obtain a system of $n+1$ linear equations 
		\begin{align*}
			\begin{pmatrix}
				1 & 0 &\dots & 0\\
				1 & 1 &\dots & n\\
				\vdots &\vdots &  &\vdots\\
				1 &n&\dots & n^n
			\end{pmatrix}\begin{pmatrix}
				\varphi_0(P)\\
				\varphi_1(P)\\
				\vdots\\
				\varphi_n(P)
			\end{pmatrix}=\begin{pmatrix}
				\varphi(0\cdot P)\\
				\varphi(1\cdot P)\\
				\vdots\\
				\varphi(n\cdot P)
			\end{pmatrix},
		\end{align*}
		where the matrix on the left hand side is a Vandermonde matrix with rational entries. We may thus invert this matrix in order to obtain the desired coefficients $c_{ij}\in \Q$, which therefore do not depend on $P\in\calP^n$ and the valuation $\varphi$.\\
		From the representation of $\varphi_i$ in \eqref{eq:homComponentVandermonde} it is clear that $\varphi_i$ is a translation invariant valuation. Moreover, since 
		\begin{align*}
			\sum_{j=0}^ns^j\varphi_j(tP)=\varphi(stP)=\sum_{j=0}^ns^jt^j\varphi_j(P)
		\end{align*}
		for all rational $s,t\ge0$ and $P\in\calP^n$, comparing coefficients shows that $\varphi_j(tP)=t^j\varphi_j(P)$ for rational $t\ge0$. Thus $\varphi_j$ is rational $j$-homogeneous.
	\end{proof}
	
	\begin{corollary}\label{corollary:PropertiesHomComp}
		Let $E$ be a topological vector space over $\Q$, $\varphi:\calP^n\rightarrow E$ a translation invariant valuation and $\varphi=\sum_{j=0}^n\varphi_j$ its homogeneous decomposition. 
		\begin{itemize}
			\item If $\varphi$ is locally bounded, then $\varphi_j$ is locally bounded for every $0\le j\le n$.
			\item If $\varphi$ is measurable, then $\varphi_j$ is measurable for every $0\le j\le n$.
		\end{itemize}
	\end{corollary}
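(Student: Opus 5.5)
The plan is to use the explicit formula \eqref{eq:homComponentVandermonde} from the previous corollary:
\begin{align*}
	\varphi_i(P)=\sum_{j=0}^n c_{ij}\varphi((j+1)P), \qquad c_{ij}\in\Q.
\end{align*}
This writes each homogeneous component as a finite rational linear combination of compositions of $\varphi$ with the dilation maps $D_{j+1}:\calP^n\rightarrow\calP^n$, $P\mapsto (j+1)P$. Both properties then follow from the corresponding properties of $\varphi$ and elementary facts about these dilations. No further structure of valuations is needed beyond this representation.

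\textbf{Local boundedness.} The dilation $D_\lambda$ with $\lambda>0$ is a homeomorphism of $\calP^n$ with respect to the Hausdorff metric, since $d_H(\lambda P,\lambda Q)=\lambda d_H(P,Q)$. Fix $P$ and let $U$ be a neighbourhood of $(j+1)P$ on which $\varphi$ is bounded. Then $D_{j+1}^{-1}(U)$ is a neighbourhood of $P$ on which $\varphi\circ D_{j+1}$ is bounded. Intersecting these finitely many neighbourhoods over $j=0,\dots,n$ gives a neighbourhood of $P$ on which every $\varphi\circ D_{j+1}$ is bounded. A finite linear combination with fixed scalar coefficients of bounded sets in a topological vector space is bounded, so $\varphi_i$ is bounded there.

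\textbf{Measurability.} Since $D_{j+1}$ is continuous, it is Borel measurable, and hence so is $\varphi\circ D_{j+1}$. The map
\begin{align*}
	P\mapsto \bigl(\varphi(P),\varphi(2P),\dots,\varphi((n+1)P)\bigr)\in E^{n+1}
\end{align*}
has measurable components. Composing it with the continuous linear map $E^{n+1}\to E$, $(v_0,\dots,v_n)\mapsto\sum_j c_{ij}v_j$, gives $\varphi_i$.

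The only point needing care, and the main obstacle, is that a map into a product with measurable components need not be measurable for the product Borel $\sigma$-algebra when $E$ is a general topological vector space. I would restrict to the case relevant in the paper, $E$ finite dimensional (e.g. $\R$ or $\R^m$), where $E^{n+1}$ is second countable and the Borel $\sigma$-algebra of the product equals the product of the Borel $\sigma$-algebras. Alternatively, I would avoid the product altogether by arguing inductively that the sum of two measurable maps into a separable metrizable topological vector space is measurable. For $E=\R$ this is the standard fact that sums and scalar multiples of Borel functions are Borel. Either route finishes the argument.
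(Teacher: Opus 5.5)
Your proposal is correct and follows the paper's proof: both write $\varphi_i$ via the Vandermonde formula \eqref{eq:homComponentVandermonde} as a fixed rational combination of the maps $\varphi\circ D_{j+1}$ and use continuity of dilations. The paper also notes that dilations map bounded sets to bounded sets, which covers the alternative reading of ``locally bounded'' as ``bounded on bounded subsets of $\calP^n$''; your caveat that summing Borel maps needs $E$ to be second countable (e.g.\ finite dimensional) addresses a point the paper passes over, and costs nothing, since the paper only applies the corollary to finite-dimensional $E$.
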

	\begin{proof}
		This follows directly from the representation of $\varphi_j$ in terms of $\varphi$ in Eq.~\eqref{eq:homComponentVandermonde}, since the map $\calP^n\rightarrow\calP^n$, $P\mapsto \lambda P$ is continuous for every $\lambda\in \Q$ and maps bounded sets of $\calP^n$ to bounded sets.
	\end{proof}

	Note that if $\varphi:\calP^n\rightarrow\R$ is a translation invariant and rational $k$-homogeneous valuation, then for all $P_1,\dots,P_N\in\calP^n$ and rational $\lambda_1,\dots,\lambda_N\ge0$, the map
	\begin{align*}
		(\lambda_1,\dots,\lambda_N)\mapsto \varphi\left(\sum_{j=1}^N\lambda_j P_j\right)
	\end{align*}
	is a homogeneous polynomial of degree $k$.
	
	\begin{corollary}
		\label{corollary:MinkwskiAdditive1Hom}
		Let $E$ be a $\Q$-vector space and $\varphi:\calP^n\rightarrow E$ a translation invariant valuation that is rational homogeneous of degree $1$. Then $\varphi$ is Minkowski additive.
	\end{corollary}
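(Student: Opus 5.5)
We deduce the statement directly from McMullen's polynomiality, \autoref{theorem:PolynomialityMcMullen}, together with the remark preceding the corollary. Fix $P,Q\in\calP^n$ and consider the map
\begin{align*}
	(\lambda_1,\lambda_2)\mapsto \varphi(\lambda_1P+\lambda_2Q),\qquad \lambda_1,\lambda_2\in\Q_{\ge0}.
\end{align*}
By \autoref{theorem:PolynomialityMcMullen}, it is a polynomial of degree at most $n$. Comparing with $\varphi(t(\lambda_1P+\lambda_2Q))=t\varphi(\lambda_1P+\lambda_2Q)$ for rational $t\ge0$ shows that every monomial of total degree different from $1$ has vanishing coefficient. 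Here we use that a polynomial in finitely many variables with coefficients in a $\Q$-vector space vanishing on $\Q_{\ge0}^N$ is zero, since infinitely many rational points in each variable suffice. Hence there are $a,b\in E$ with
\begin{align*}
	\varphi(\lambda_1P+\lambda_2Q)=\lambda_1a+\lambda_2b\qquad\text{for all rational }\lambda_1,\lambda_2\ge0.
\end{align*}

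Setting $(\lambda_1,\lambda_2)=(1,0)$ gives $a=\varphi(P+0\cdot Q)$. Here $0\cdot Q=\{0\}$ is a point, so translation invariance yields $\varphi(P+\{0\})=\varphi(P)$, hence $a=\varphi(P)$. Symmetrically, $b=\varphi(Q)$. Setting $\lambda_1=\lambda_2=1$ then gives $\varphi(P+Q)=\varphi(P)+\varphi(Q)$.

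The only point requiring care is that \autoref{theorem:PolynomialityMcMullen} is stated for nonnegative rational $\lambda_j$, including $\lambda_j=0$. This is the case at the endpoints, where $0\cdot Q$ is a single point and translation invariance makes it harmless. No further obstacle is expected.
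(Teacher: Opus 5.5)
Your proposal is correct and follows essentially the same route as the paper: McMullen's polynomiality combined with rational $1$-homogeneity yields an expression $\lambda_1a+\lambda_2b$, the coefficients are identified by evaluating at $(1,0)$ and $(0,1)$ using $0\cdot Q=\{0\}$ and translation invariance, and then one sets $\lambda_1=\lambda_2=1$. The differences are cosmetic: the paper first subtracts $s\varphi(P)+t\varphi(Q)$ and shows the remaining linear coefficients vanish, and it takes homogeneity of the polynomial from the remark preceding the corollary, whereas you verify that homogeneity explicitly via the scaling comparison (and correctly take $a,b\in E$ rather than in $\R$).
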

	\begin{proof}
		Consider the map $(s,t)\mapsto \varphi(sP+tQ)-s\varphi(P)-t\varphi(Q)$ for two polytopes $P,Q\in\calP^n$ and rational $s,t\ge0$. Since $\varphi$ is rational $1$-homogeneous, this is a polynomial of degree $1$, so there exist $a,b\in \R$ such that 
		\begin{align*}
			\varphi(sP+tQ)-s\varphi(P)-t\varphi(Q)=sa+tb
		\end{align*}
		for all rational $s,t\ge0$. If we set $s=0$ and $t=1$, we obtain $b=0$, while we obtain $a=0$ if we set $s=1$ and $t=0$. Thus 
		\begin{align*}
			\varphi(sP+tQ)=s\varphi(P)+t\varphi(Q),
		\end{align*}
		which for $s=t=1$ implies the result.
	\end{proof}
	
	The following is well known and essentially contained in \cite{McMullenValuationsEulertype1977}*{Section~3}.
	\begin{theorem}\label{theorem:mixValuation}
		Let $E$ be a $\Q$-vector space $\varphi:\calP^n\rightarrow E$ a translation invariant valuation. If $\varphi$ is rational $k$-homogeneous, then there exists a unique map $\bar{\varphi}:(\calP^n)^k\rightarrow\R$ with the following properties:
		\begin{enumerate}
			\item $\bar{\varphi}$ is Minkowski additive in each argument.
			\item $\bar{\varphi}$ is symmetric.
			\item $\bar{\varphi}(P,\dots,P)=\varphi(P)$ for every $P\in\calP^n$.
		\end{enumerate}
		Moreover, there exist constants $c_{i_1,\dots,i_k}\in\Q$, $0\le i_1,\dots,i_k\le n$, such that
		\begin{align}
			\label{eq:constantsPolarization}
			\bar{\varphi}(P_1,\dots,P_k)=\sum_{i_1,\dots,i_k=0}^kc_{i_1,\dots,i_k}\varphi\left(\sum_{j=1}^k i_j P_j\right).
		\end{align}
	\end{theorem}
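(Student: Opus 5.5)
The plan is to obtain $\bar{\varphi}$ by polarization, using the multivariate polynomiality from \autoref{theorem:PolynomialityMcMullen}. Fix $P_1,\dots,P_k\in\calP^n$. Since $\varphi$ is rational $k$-homogeneous, the remark after \autoref{corollary:PropertiesHomComp} shows that $(\lambda_1,\dots,\lambda_k)\mapsto\varphi(\sum_j\lambda_jP_j)$ is, for rational $\lambda_j\ge0$, a homogeneous polynomial of degree $k$ with coefficients in $E$. I will define $\bar{\varphi}(P_1,\dots,P_k)$ as $\frac{1}{k!}$ times the coefficient of the monomial $\lambda_1\cdots\lambda_k$.

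\textbf{Step 1: the explicit formula.} A polynomial of degree at most $k$ in $k$ variables is determined by its values on the grid $\{0,\dots,k\}^k$. Indeed, on each coordinate a univariate polynomial of degree $\le k$ is recovered from its values at $0,\dots,k$ by inverting the rational Vandermonde matrix, as in \eqref{eq:homComponentVandermonde}. Tensoring these inversions gives a rational linear combination of the values $\varphi(\sum_j i_jP_j)$ that extracts each coefficient. The coefficients $c_{i_1,\dots,i_k}\in\Q$ come from the Vandermonde inverse, so they are independent of $\varphi$ and of the $P_j$. This gives \eqref{eq:constantsPolarization}. One could alternatively use the inclusion--exclusion difference formula $\frac1{k!}\sum_{I}(-1)^{k-|I|}\varphi(\sum_{i\in I}P_i)$, which also has this form.

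\textbf{Step 2: the three properties.} Symmetry holds because permuting the $P_j$ permutes the variables $\lambda_j$, and this fixes the monomial $\lambda_1\cdots\lambda_k$. For (3), set all $P_j=P$. Then $\varphi(\sum\lambda_jP)=\varphi((\sum\lambda_j)P)=(\sum_j\lambda_j)^k\varphi(P)$ by rational homogeneity and convexity of $P$. The coefficient of $\lambda_1\cdots\lambda_k$ in this expression is $k!\varphi(P)$.

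For additivity in the first argument, replace $P_1$ by $Q+R$ and compare with the polynomial in $k+1$ variables $\mu,\nu,\lambda_2,\dots,\lambda_k$ given by $\varphi(\mu Q+\nu R+\sum_{j\ge2}\lambda_jP_j)$. Setting $\mu=\nu=\lambda_1$ recovers the original polynomial. By homogeneity of degree $k$, the coefficient of $\lambda_1\lambda_2\cdots\lambda_k$ after this substitution is the sum of the coefficients of $\mu\lambda_2\cdots\lambda_k$ and $\nu\lambda_2\cdots\lambda_k$. Each of these is a multilinear coefficient of the same kind. Setting $\nu=0$ (respectively $\mu=0$) identifies them with $k!\,\bar{\varphi}(Q,P_2,\dots)$ and $k!\,\bar{\varphi}(R,P_2,\dots)$. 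This is the only step needing some care with bookkeeping, and I expect it to be the main obstacle. The identification of a polynomial on rational points with its coefficients is legitimate because polynomials on $\Q_{\ge0}^m$ are determined by their values.

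\textbf{Step 3: uniqueness.} Suppose $\psi$ satisfies (1)--(3). Multi-additivity and symmetry give
\[
\varphi\Bigl(\sum_j\lambda_jP_j\Bigr)=\psi\Bigl(\sum_j\lambda_jP_j,\dots\Bigr)=\sum\lambda_{j_1}\cdots\lambda_{j_k}\,\psi(P_{j_1},\dots,P_{j_k})
\]
for positive integers $\lambda_j$, and hence for positive rationals $\lambda_j$ by additivity. The coefficient of $\lambda_1\cdots\lambda_k$ is then $k!\,\psi(P_1,\dots,P_k)$, so $\psi=\bar{\varphi}$.
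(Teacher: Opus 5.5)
Your proof is correct. Its backbone is the paper's: $\bar{\varphi}$ is extracted from the coefficient of $\lambda_1\cdots\lambda_k$ in $\varphi(\sum_j\lambda_jP_j)$, and the rational constants come from inverting the Vandermonde matrix in each variable. The differences are in the details.

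First, your factor $\frac{1}{k!}$ is actually necessary for property (3). The paper takes the bare coefficient, which equals $k!\,\varphi(P)$ on the diagonal. Your normalization is also the one consistent with the paper's later expansion $\varphi(P_E+P_F)=\sum_j\binom{k}{j}\bar{\varphi}(P_E[j],P_F[k-j])$.

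Second, the additivity arguments differ. The paper observes that the coefficient is rational $1$-homogeneous in each argument and invokes \autoref{corollary:MinkwskiAdditive1Hom}. This is shorter, but it tacitly needs that $\bar{\varphi}$ is a translation invariant valuation in each slot. That holds by \eqref{eq:constantsPolarization}, since $P\mapsto\varphi(P+Q)$ is a valuation, and McMullen's polynomiality is then reapplied to that new valuation. Your comparison with the polynomial in $\mu,\nu,\lambda_2,\dots,\lambda_k$ uses only the polynomiality of $\varphi$ itself together with $\lambda(Q+R)=\lambda Q+\lambda R$, so it is more self-contained.

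Third, the paper does not spell out uniqueness, and your Step 3 supplies it correctly. It rests on two points: rational scalars can be pulled out of a multi-additive $\psi$ because $E$ is a $\Q$-vector space, and polynomials agreeing on positive rational points have the same coefficients.
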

	\begin{proof}
		Let $\bar{\varphi}(P_1,\dots,P_k)$ be the coefficient of $\lambda_1\cdot\dots\cdot\lambda_k$ of the homogeneous polynomial 
		\begin{align*}
			(\lambda_1,\dots,\lambda_k)\mapsto\varphi\left(\sum_{j=1}^k\lambda_jP_j\right)
		\end{align*}
		for rational $\lambda_1,\dots,\lambda_k\ge 0$, compare \autoref{theorem:PolynomialityMcMullen}. We may apply the inverse of the Vandermonde matrix in each argument to obtain the desired constants $c_{i_1,\dots,i_k}\in\Q$, $0\le i_1,\dots,i_k\le n$, independent of $P$ and $\varphi$ such that Eq.~\eqref{eq:constantsPolarization} holds. Then $\bar{\varphi}$ satisfies the three properties listed above. Note in particular that $\bar{\varphi}$ is rational $1$-homogeneous since it is the coefficient of $\lambda_1\dots\lambda_k$, so it is Minkowski additive by \autoref{corollary:MinkwskiAdditive1Hom}.
	\end{proof}
	We will call the map $\bar{\varphi}$ the polarization of $\varphi$. It inherits the following properties from $\varphi$.	
	\begin{corollary}
		\label{corollary:PropertiesHomComponents}
		Let $E$ be a finite dimensional real vector space and $\varphi:\calP^n\rightarrow E$ a translation invariant valuation that is rational homogeneous of degree $k$.
		\begin{enumerate}
			\item If $\varphi$ is locally bounded, then $\bar{\varphi}$ is locally bounded on $(\calP^n)^k$.
			\item If $\varphi$ is measurable, then $\bar{\varphi}$ is measurable on $(\calP^n)^k$.
			\item If $\varphi$ is continuous, then $\bar{\varphi}$ is continuous on $(\calP^n)^k$.
		\end{enumerate}
	\end{corollary}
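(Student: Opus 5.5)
The plan is to read all three properties off the explicit polarization formula \eqref{eq:constantsPolarization} from \autoref{theorem:mixValuation}. It expresses $\bar{\varphi}$ as a finite rational linear combination
\begin{align*}
	\bar{\varphi}(P_1,\dots,P_k)=\sum_{i_1,\dots,i_k}c_{i_1,\dots,i_k}\,\varphi\circ M_{i_1,\dots,i_k}(P_1,\dots,P_k),
\end{align*}
where $M_{i_1,\dots,i_k}:(\calP^n)^k\rightarrow\calP^n$ is given by $(P_1,\dots,P_k)\mapsto\sum_{j}i_jP_j$. The index set is finite and the constants $c_{i_1,\dots,i_k}\in\Q$ depend neither on $\varphi$ nor on the polytopes. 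So it suffices to understand the maps $M_{i_1,\dots,i_k}$ and then use that finite linear combinations preserve each of the three properties in the finite dimensional real vector space $E$.

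First, $M_{i_1,\dots,i_k}$ is continuous with respect to the Hausdorff metric, being a composition of dilations by nonnegative integers and Minkowski addition. Both of these are continuous, and in fact Lipschitz: $d_H(\lambda K,\lambda L)=\lambda d_H(K,L)$ and $d_H(K+K',L+L')\le d_H(K,L)+d_H(K',L')$. They also map bounded sets to bounded sets, since $\sup_{x\in \sum i_jP_j}|x|\le\sum_j i_j\sup_{x\in P_j}|x|$.

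With this in hand, the three items follow in turn.
\begin{enumerate}
	\item If $\varphi$ is continuous, each summand $\varphi\circ M_{i_1,\dots,i_k}$ is continuous, hence so is the finite sum $\bar{\varphi}$.
	\item If $\varphi$ is measurable, each summand is a Borel map, because it is the composition of a Borel map with a continuous map. Addition and scalar multiplication on $E$ are continuous, so the map $E^N\rightarrow E$ given by the linear combination is Borel. Hence $\bar\varphi$ is Borel measurable.
	\item If $\varphi$ is locally bounded, let $U\subset(\calP^n)^k$ be bounded, or a small neighbourhood of a point. Then $M_{i_1,\dots,i_k}(U)$ is a bounded subset of $\calP^n$, so $\varphi$ is bounded there, and a finite sum of bounded functions is bounded.
\end{enumerate}

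I expect the only delicate step to be the measurability claim. The issue is that measurability of $(\varphi\circ M_1,\dots,\varphi\circ M_N):(\calP^n)^k\rightarrow E^N$ with respect to the Borel $\sigma$-algebra of $E^N$ requires the Borel $\sigma$-algebra of the product $E^N$ to coincide with the product $\sigma$-algebra. This holds because $E^N$ is separable metrizable, being finite dimensional. Likewise, I should note that $(\calP^n)^k$ carries the product topology, which is the metric topology of the max of the Hausdorff distances. Beyond these points the argument is purely formal.
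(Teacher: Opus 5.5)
Your proposal is correct and follows the paper's proof: both derive the three properties from the polarization formula \eqref{eq:constantsPolarization}, using that the Minkowski-combination maps $(P_1,\dots,P_k)\mapsto\sum_j i_jP_j$ are continuous and send bounded sets to bounded sets. Your additional remarks on the Borel measurability of the finite linear combination (via separability of $E^N$) only spell out details that the paper leaves implicit.
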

	\begin{proof}
		For all rational $\lambda_1,\dots,\lambda_k$, the map $(\calP^n)^k\rightarrow\calP^n$, $(P_1,\dots,P_k)\mapsto \sum_{j=1}^k \lambda_j P_j$ is continuous and maps bounded subsets to bounded subsets. From Eq.~\eqref{eq:constantsPolarization}, we see that $\bar{\varphi}$ is a linear combination of the composition of $\varphi$ with maps of this type, which implies the claim.
	\end{proof}
	
	If $\varphi:\calP^n\rightarrow E$ is a translation invariant valuation that is rational homogeneous of degree $k$, we denote by $(P,Q)\mapsto\bar{\varphi}(P[j],Q[k-j])$ the functional obtained by taking $P$ for $j$ arguments and $Q$ for $k-j$ arguments of the polarization $\bar{\varphi}$. Then \autoref{theorem:PolynomialityMcMullen} (or the representation of $\bar{\varphi}$ in \autoref{theorem:mixValuation}) shows that this is a translation invariant valuation in each argument that is rational homogeneous of degree $j$ and $k-j$ respectively. We will use the following observation in \autoref{section:HadwigerPolytope}.
	\begin{remark}
		\label{remark:propertiesPolarization}
		If we have a direct sum decomposition $\R^n=E\oplus F$ of real vector spaces and a translation invariant valuation $\varphi:\calP^n\rightarrow \R$ that is rational homogeneous of degree $k$, then we may define a map
		\begin{align*}
			\tilde{\varphi}:\calP(E)\times\calP(F)&\rightarrow\R\\
			(P,Q)&\mapsto \bar{\varphi}(P[j],Q[k-j])
		\end{align*}
		to obtain a functional that is a $j$-homogeneous valuation in $\calP(E)$ and $(k-j)$-homogeneous valuation on $\calP(F)$ and translation invariant in both arguments. Moreover, since $\bar{\varphi}$ is given by evaluating $\varphi$ in suitable sums of polytopes by \autoref{theorem:mixValuation}, we directly obtain the following additional properties:
		\begin{enumerate}
			\item If $\varphi$ is simple, then so is $\tilde{\varphi}$ in both arguments.
			\item If the direct sum $E\oplus F$ is orthogonal and $\varphi$ is $\SO(n)$-invariant, then $\tilde{\varphi}$ is $\SO(E)$-invariant in the first and $\SO(F)$-invariant in the second argument.
		\end{enumerate}
	\end{remark}
	\subsection{Weakly continuous valuations}
		\label{section:weaklyContValuations}
			For a finite tuple $(v_1,\dots,v_N)$ of unit vectors, let $\calP^n_{(v_1,\dots,v_N)}\subset \calP^n$ denote the subset of all polytopes $P\in\calP^n$ such that there are $\eta_1,\dots,\eta_N\in\R$ with
		\begin{align*}
			P=\{x\in \R^n: \langle x,v_i\rangle\le \eta_j, 1\le j\le N\}.
		\end{align*}
		We call a tuple $(v_1,\dots,v_N)$ of unit vectors \emph{admissible} if $P(\eta):=\{x\in \R^n: \langle x,v_i\rangle\le \eta_j, 1\le j\le N\}$ is compact for all $\eta=(\eta_1,\dots,\eta_N)\in\R^N$. In this case, this set is either empty or belongs to $P_{(v_1,\dots,v_N)}$. If $(v_1,\dots,v_N)$ is an admissible tuple, we consider the (nonempty) set\begin{align*}
			U_{(v_1,\dots,v_N)}:=\{(\eta_1,\dots,\eta_N)\in \R^N: P(\eta_1,\dots,\eta_N)\neq\emptyset\}.
		\end{align*}
		If $\varphi:\calP^n\rightarrow E$ is a valuation with values in a topological vector space, we may consider restriction of $\varphi$ to $\calP^n_{(v_1,\dots,v_n)}$ as a function on $U_{(v_1,\dots,v_N)}$ by considering the function
		\begin{align*}
			(\eta_1,\dots,\eta_N)\mapsto \varphi(P(\eta_1,\dots,\eta_N))=\varphi\left(\{x\in \R^n: \langle x,v_j\rangle\le \eta_j, 1\le j\le N\}\right)
		\end{align*}
		Then $\varphi$ is called \emph{weakly continuous} if this function is \emph{jointly} continuous.
		\begin{remark}
			It is sometimes not clear in the literature whether weak continuity of valuations on polytopes is assumed to mean joint or only separate continuity of this map. It turns out that these two notions are in fact equivalent for translation invariant valuations, however, this result seems to not be available in the literature.
		\end{remark}
		
		\begin{theorem}
			\label{theorem:equivJointSepCont}
			The following are equivalent for a translation invariant valuation $\varphi:\calP^n\rightarrow E$ into a Hausdorff topological real vector space:
			\begin{enumerate}
				\item $\varphi$ is weakly continuous.
				\item  For each admissible tuple $(v_1,\dots,v_N)$ of unit vectors, the function 
				\begin{align}
					\label{eq:mapWeakContinuity}
					\begin{split}
						U_{(v_1,\dots,v_N)}&\rightarrow E\\
						(\eta)&\mapsto \varphi(P_{(v_1,\dots,v_N)}(\eta))	
					\end{split}					
				\end{align}
				is separately continuous.
			\end{enumerate}
		\end{theorem}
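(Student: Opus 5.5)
The idea is to show that the function $f(\eta):=\varphi(P_{(v_1,\dots,v_N)}(\eta))$ on $U:=U_{(v_1,\dots,v_N)}$ is \emph{piecewise} a real polynomial, with finitely many closed polyhedral pieces covering $U$. Joint continuity then follows from the pasting lemma. The implication (1)$\Rightarrow$(2) is trivial, so I only treat (2)$\Rightarrow$(1).

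\emph{Step 1: the pieces.} Partition $U$ according to the combinatorial (normal fan) type of $P(\eta)$. There are finitely many types. Each type region is a relatively open convex polyhedral cone, the closures $C$ of the full-dimensional regions cover $U$, and $U$ is itself convex with nonempty interior. On such a closed cone $C$, $\eta\mapsto P(\eta)$ is Minkowski linear: $P(s\eta+t\eta')=sP(\eta)+tP(\eta')$ for $\eta,\eta'\in C$ and $s,t\ge0$. This uses that a polytope whose type lies in the closure has normal fan coarser than that of the generic type. Because of this, we can fix $\eta^1,\dots,\eta^{m}$ spanning $C$ and $\eta^0\in\operatorname{int} C$, and apply \autoref{theorem:PolynomialityMcMullen} to $P(\eta^0),P(\eta^1),\dots$. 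This shows that $f$ restricted to rational convex/conic combinations in $C$ is a $\Q$-polynomial of degree at most $n$. Using the symmetric $\Q$-multilinear coefficients from \autoref{corollary:homogneousDecompPolynomials}, defined on the $\Q$-span of the $\eta^i$, I extend this to a $\Q$-polynomial $p_C$ on all of $\R^N$ (after fixing a $\Q$-basis compatible with the $\eta^i$). The extension should be arranged to agree with $f$ on $\operatorname{int} C$ along rational segments from a fixed base point.

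\emph{Step 2: realness.} Hypothesis (2) says $f$ is continuous in each coordinate $\eta_j$ separately. Hence $p_C$ is separately continuous on the open set $\operatorname{int} C$, at least on the dense set where $p_C=f$; making this identification precise is part of Step 1. By \autoref{lemma:finiteDimPolynomials} (3)$\Rightarrow$(1), $p_C$ is an $\R$-polynomial and in particular continuous. A second application of Step 1's rational-segment argument, now combined with continuity of $p_C$ and separate continuity of $f$ along coordinate lines inside $\operatorname{int} C$, yields $f=p_C$ on all of $\operatorname{int} C$.

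\emph{Step 3: boundary points.} Let $\eta_0\in\partial C\cap U$ and $\eta\in\operatorname{int} C$. Minkowski linearity on the closed cone gives
\begin{align*}
	f(\eta_0+t(\eta-\eta_0))=\varphi\big((1-t)P(\eta_0)+tP(\eta)\big),
\end{align*}
which by \autoref{theorem:PolynomialityMcMullen} is a genuine polynomial in rational $t\in[0,1]$. It agrees with the real polynomial $t\mapsto p_C(\eta_0+t(\eta-\eta_0))$ for rational $t\in(0,1]$, since those points lie in $\operatorname{int} C$. Two polynomials agreeing at infinitely many points coincide, so the values at $t=0$ agree as well, i.e. $f(\eta_0)=p_C(\eta_0)$. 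Thus $f=p_C$ on $C\cap U$. Every point of $U$ has a neighbourhood covered by the finitely many closed cones $C$ containing it, and on each of them $f$ coincides with a continuous function. The pasting lemma therefore gives joint continuity of $f$, i.e. $\varphi$ is weakly continuous.

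\emph{Main obstacle.} I expect the hardest part to be Steps 1--2: passing from the $\Q$-polynomial structure, which is only available on rational combinations of finitely many chosen polytopes, to a single $\R$-polynomial that equals $f$ on the whole open cone. The difficulty is that separate continuity in the \emph{coordinates} $\eta_j$ must be transferred to the polynomial in the $\lambda$-parametrization. I would first try to take the $\eta^i$ to include $\eta^0\pm\epsilon e_j$, so that coordinate directions are among the generators and the relevant lines are parametrized polynomially. If that is not enough, I would fall back on the alternative of applying \autoref{lemma:finiteDimPolynomials} one coordinate line at a time and using induction on the number of coordinates.
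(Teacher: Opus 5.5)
\textbf{Verdict: the proposal has a genuine gap in Steps~1--2; Step~3 and the pasting argument are sound once those steps are repaired.}

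\textbf{Comparison with the paper.} Your route differs from the paper's. The paper cites Pukhlikov--Khovanski{\u i} to assert that $\eta\mapsto\varphi(P(\eta))$ is the restriction to all of $U$ of a \emph{single} $\Q$-polynomial, and then applies \autoref{lemma:finiteDimPolynomials} once. Your piecewise set-up is actually the more careful one, because on $U$ the map is in general only piecewise polynomial. Take $v=(e_1,-e_1,e_2,-e_2,(e_1+e_2)/\sqrt2)$ in $\R^2$ and $\eta=(1,1,1,1,\eta_5)$ with $\eta_5\ge0$. The area of $P(\eta)$ is $4-\tfrac12\bigl(\max\{0,2-\sqrt2\,\eta_5\}\bigr)^2$. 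This is continuous in $\eta_5$, so by \autoref{lemma:finiteDimPolynomials} it would have to be a real polynomial in $\eta_5$ if the map were a global $\Q$-polynomial, and it is not. So polynomiality can only be expected cone by cone, and your Step~3 plus the pasting lemma is a correct way to glue once Steps~1--2 are in place.

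\textbf{The gap.} It sits in Steps~1--2, where you suspected it.
\begin{itemize}
\item \autoref{theorem:PolynomialityMcMullen} only controls $f$ at \emph{rational} combinations of your finitely many $\eta^i$, which is a countable subset of $C$.
\item Extending the $\Q$-multilinear coefficients along a Hamel basis then produces a $p_C$ with no relation to $f$ off that set.
\item Separate continuity is a statement about entire coordinate lines. It cannot be transferred to $p_C$ from agreement on a countable dense set.
\item The ``second application'' that uses continuity of $p_C$ to get $f=p_C$ on $\mathrm{int}\,C$ is therefore circular.
\item Adding $\eta^0\pm\epsilon e_j$ to the generators does not help by itself: it again reaches only rational points of those lines, and $p_C$ stays unconstrained elsewhere.
\end{itemize}
What is missing is a polynomial that agrees with $f$ on \emph{all} of $C$. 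There are two ways to get one.

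(a) Apply \autoref{theorem:PolynomialityMcMullen} to $P(x),P(a_1),\dots,P(a_{n+1})$ for \emph{arbitrary} $x,a_i\in C$. Minkowski linearity then gives $\Delta_{a_1}\cdots\Delta_{a_{n+1}}f(x)=0$ throughout $C$. Consequently $(a_1,\dots,a_n)\mapsto\Delta_{a_1}\cdots\Delta_{a_n}f$ is independent of the base point, symmetric, and additive on $C$, so it extends to $C-C=\R^N$. Induction on the degree yields a $\Q$-polynomial $p_C$ with $p_C|_C=f|_C$. Then \autoref{lemma:finiteDimPolynomials} applies directly, and Step~3 becomes superfluous.

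(b) Complete your fallback, working with $f$ itself rather than $p_C$.
\begin{itemize}
\item On a coordinate segment contained in $C$, Minkowski linearity and McMullen make $f$ an $E$-valued polynomial of degree $\le n$ at rational parameters.
\item Hypothesis~(2), together with $E$ being Hausdorff, extends this identity to all real parameters.
\item A function on a box in $\mathrm{int}\,C$ that is a polynomial of degree $\le n$ on every coordinate segment is a polynomial. Interpolate one variable at a time: $f(x)=\sum_{k=0}^nL_k(x_1)f(t_k,x_2,\dots,x_N)$, with Lagrange polynomials $L_k$ for nodes $t_0,\dots,t_n$.
\item Connectedness of $\mathrm{int}\,C$ then gives a single real polynomial there, and your Step~3 finishes.
\end{itemize}
Route (b) also avoids the fact that \autoref{lemma:finiteDimPolynomials} is only stated for $\R^m$-valued maps.

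\textbf{A smaller point.} Partitioning by the normal fan of $P(\eta)$ is too coarse when the tuple has repetitions. For $v=(1,-1,1)$ in $\R^1$, $P(\eta)=[-\eta_2,\min\{\eta_1,\eta_3\}]$ has the same normal fan for all $\eta\in\mathrm{int}\,U$, yet $\eta\mapsto P(\eta)$ is not Minkowski linear there. There are two fixes.
\begin{itemize}
\item Reduce first to pairwise distinct $v_i$; composing with coordinatewise minima preserves joint continuity.
\item Alternatively, use the chambers of the arrangement of hyperplanes $\eta_j=\langle x_B(\eta),v_j\rangle$, where $x_B(\eta)$ solves $\langle x,v_i\rangle=\eta_i$ for $i\in B$ and $B$ ranges over the bases. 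Bases feasible in the interior of a chamber stay feasible on its closure. By continuity of $\eta\mapsto P(\eta)$, $P(\eta)$ is the convex hull of the corresponding $x_B(\eta)$ on the closed chamber, and Minkowski linearity follows.
\end{itemize}
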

		\begin{proof}
			It follows from \cite{PukhlikovKhovanskiiFinitelyadditivemeasures1992}*{\S2 Theorem 1} that the map in Eq.~\eqref{eq:mapWeakContinuity} is the restriction of a $\Q$-polynomial $P:\R^N\rightarrow E$ of degree at most $n$. Assume that (2) holds. Since the restriction of this polynomial to the set $U_{(v_1,\dots,v_N)}$, which has nonempty interior, is separately continuous, this is in fact an $\R$-polynomial and therefore in particular continuous. Thus $\varphi$ is weakly continuous.
		\end{proof}
	
		We rely on the following result relating weak continuity and $\R$-polynomiality for translation invariant valuations, which is due to McMullen.	
		\begin{theorem}[\cite{McMullenValuationsEulertype1977}*{Theorem~9}]\label{theorem:RealHomogeneity}
			The following are equivalent for a translation invariant valuation $\varphi:\calP^n\rightarrow E$ into a Hausdorff topological vector space:
			\begin{enumerate}
				\item $\varphi$ is weakly continuous.
				\item For all polytopes $P_1,\dots,P_N\in\calP^n$, the map
				\begin{align*}
					(\lambda_1,\dots,\lambda_N)\mapsto\varphi\left(\sum_{j=1}^N\lambda_jP_j\right)
				\end{align*}
				is an $\R$-polynomial in $\lambda_1,\dots,\lambda_N\ge 0$.
			\end{enumerate}
		\end{theorem}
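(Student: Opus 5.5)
The equivalence in \autoref{theorem:RealHomogeneity} will be proved by passing, in both directions, through the $\Q$-polynomiality from \autoref{theorem:PolynomialityMcMullen} together with the regularity criterion in \autoref{lemma:finiteDimPolynomials}. For (2)$\Rightarrow$(1), fix an admissible tuple $(v_1,\dots,v_N)$. The plan is to show that $\eta\mapsto\varphi(P(\eta))$ is jointly continuous on $U_{(v_1,\dots,v_N)}$. By \autoref{theorem:equivJointSepCont} it suffices to establish separate continuity, i.e.\ continuity in each single $\eta_i$ with the others fixed.

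The key geometric input is local: as long as the combinatorial type of $P(\eta)$ does not change, $P(\eta)$ depends Minkowski-linearly on $\eta$. Concretely, I expect that the type cone (where the normal fan is fixed) is decomposed into finitely many relatively open cones, and on the closure of such a cone $C$, $\eta\mapsto P(\eta)$ is Minkowski additive and positively homogeneous. In particular, on a simplicial subcone spanned by $\eta^{(1)},\dots,\eta^{(m)}$ one would have $P(\sum_i\lambda_i\eta^{(i)})=\sum_i\lambda_iP(\eta^{(i)})$ up to translation (McMullen's description of type cones). Hypothesis (2) then gives that $\varphi\circ P$ is an $\R$-polynomial on each closed cone in a finite polyhedral subdivision of $U$. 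Separately, $\varphi\circ P$ is the restriction of a global $\Q$-polynomial $F$ on $\R^N$ of degree $\le n$, again by \cite{PukhlikovKhovanskiiFinitelyadditivemeasures1992}. Since some cone $C$ has nonempty interior, $F$ agrees with an $\R$-polynomial on an open set, hence $F$ is bounded there. By \autoref{lemma:finiteDimPolynomials}, $F$ is then an $\R$-polynomial everywhere, so $F$ is continuous, which yields weak continuity. Translations are harmless by invariance. This route even avoids the need for separate continuity.

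For (1)$\Rightarrow$(2), fix $P_1,\dots,P_N$ and let $v_1,\dots,v_M$ be the unit normals of all facets of the Minkowski sums $\sum_j P_j$, taken from the common refinement of the normal fans, and add enough extra normals to make the tuple admissible. Then every $\sum_j\lambda_jP_j$ with $\lambda_j\ge0$ lies in $\calP^n_{(v_1,\dots,v_M)}$, with support numbers $\eta_i=\sum_j\lambda_jh_{P_j}(v_i)$, which are linear in $\lambda$. Hence $\lambda\mapsto\varphi(\sum_j\lambda_jP_j)$ is a composition of the continuous map $\eta\mapsto\varphi(P(\eta))$ with a linear map. It is therefore continuous on $[0,\infty)^N$, and by \autoref{theorem:PolynomialityMcMullen} it is a $\Q$-polynomial on rational points. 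Extending that $\Q$-polynomial to $\Q^N$ and passing to its real extension, the function is continuous on an open set, so by \autoref{lemma:finiteDimPolynomials} (applied after tensoring, or coordinatewise in a finite dimensional subspace spanned by the finitely many coefficients), the polynomial coefficients are $\R$-linear and the continuous function agrees with the $\R$-polynomial by density. A point that must be handled with care is the infinite-dimensional $E$: the coefficients lie in the span of finitely many values $\varphi(\sum i_jP_j)$, so I would work in that finite dimensional subspace, which is closed in the Hausdorff space $E$.

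The main obstacle is the geometric claim in (2)$\Rightarrow$(1) that $P(\eta)$ is Minkowski-linear on the cells of a finite subdivision of $U$, in particular near degenerate $\eta$ where facets disappear. If that is awkward, I would instead use only the interior of one full-dimensional type cone, where linearity is classical, and let the global $\Q$-polynomial $F$ carry the argument.
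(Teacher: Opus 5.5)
\textbf{Gap in (2)$\Rightarrow$(1).} The step that carries your conclusion fails. In general there is no single $\Q$-polynomial $F$ on $\R^N$ whose restriction to $U_{(v_1,\dots,v_N)}$ is $\eta\mapsto\varphi(P(\eta))$. For a counterexample, take $n=2$, the admissible tuple $(e_1,-e_1,e_2,-e_2,(e_1+e_2)/\sqrt{2})$, $\varphi=V_2$ and $\eta=(1,1,1,1,t)$. Then $V_2(P(\eta))=4$ for $t\ge\sqrt{2}$, but $V_2(P(\eta))=4-(2-\sqrt{2}\,t)^2/2$ for $0\le t\le\sqrt{2}$. Any $\Q$-polynomial restricted to the rational points of this line is a single polynomial in $t$, so it would be identically $4$, a contradiction. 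The function is only piecewise polynomial, with different polynomials on different cones of a finite subdivision of $U_{(v_1,\dots,v_N)}$. (For the same reason, the global phrasing in the proof of \autoref{theorem:equivJointSepCont} can only be used cone by cone.) So knowing that $\varphi\circ P$ is an $\R$-polynomial on one full-dimensional cone says nothing about the other cones. Your fallback (one type cone plus $F$) fails for the same reason.

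\textbf{How to repair it.} Use your own first step together with the pasting lemma instead of $F$. Suppose $U_{(v_1,\dots,v_N)}$ is a finite union of closed cones on each of which $\eta\mapsto P(\eta)$ is Minkowski linear up to translation. Triangulate each cone into simplicial cones modulo the lineality space $\{(\langle a,v_i\rangle)_i:a\in\R^n\}$, which only translates $P(\eta)$. Hypothesis (2) then makes $\varphi\circ P$ continuous on each of finitely many closed sets covering $U_{(v_1,\dots,v_N)}$, hence jointly continuous. So the decomposition you called the main obstacle is not optional and must be proved. Here is a short argument:
\begin{enumerate}
\item For each index set $I$ with $\{v_i\}_{i\in I}$ a basis of $\R^n$, let $x_I(\eta)$ solve $\langle x,v_i\rangle=\eta_i$ for $i\in I$; this is linear in $\eta$.
\item Every vertex of $P(\eta)$ is a feasible $x_I(\eta)$. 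On each open cell of the hyperplane arrangement $\{\eta_j=\langle x_I(\eta),v_j\rangle\}$ the feasible set $\mathcal F$ is constant, so $P(\eta)=Q(\eta):=\conv\{x_I(\eta):I\in\mathcal F\}$ there.
\item The chain $P(\eta)+P(\eta')\subseteq P(\eta+\eta')=Q(\eta+\eta')\subseteq Q(\eta)+Q(\eta')$ gives Minkowski linearity on the open cell.
\item Apply the same chain to $\eta+t\eta'$, with $\eta$ in the closure, $\eta'$ in the cell and $t\to 0$. Together with $Q\subseteq P$, which passes to limits, this gives $P=Q$ on the closed cell.
\end{enumerate}
With this repair your route genuinely differs from the one the paper points to. The paper's route uses McMullen's one-sided derivatives, which only give separate continuity, and then needs \autoref{theorem:equivJointSepCont}. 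Yours produces joint continuity directly.

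\textbf{Direction (1)$\Rightarrow$(2).} This direction is fine. Density of rational points and the Hausdorff property of $E$ already give the $\R$-polynomial, without \autoref{lemma:finiteDimPolynomials}. However, take the $v_i$ to be the rays of the normal fan of $\sum_jP_j+C$ for a cube $C$. For lower-dimensional $\sum_jP_j$, merely adding normals until the tuple is admissible need not give $\sum_j\lambda_jP_j=P(\eta(\lambda))$: a diagonal segment with normals $\pm e_1,\pm e_2$ yields a square.
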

		\begin{remark}
			McMullen's proof uses that (2) implies that certain one-sided derivatives of the map in Eq.~\eqref{eq:mapWeakContinuity} exist. However, the existence of these derivatives only implies separate continuity of this map. Due to \autoref{theorem:equivJointSepCont}, this is sufficient to show that $\varphi$ is weakly continuous, however, this seems to be a nontrivial step required for the argument.
		\end{remark}
		
		The following was shown by Hadwiger in \cite{HadwigerVorlesungenuberInhalt1957} (see also  \cite{SchneiderConvexbodiesBrunn2014}*{Theorem~6.4.3}, or \cite{AleskerIntroductiontheoryvaluations2018}*{Theorem~3.1.1.} for a proof for continuous valuations on convex bodies that holds verbatim in the more general setting below).
		\begin{theorem}
			\label{theorem:HadwigerTopDegree}
			Let $E$ be a real Hausdorff topological vector space and $\varphi:\calP^n\rightarrow E$ a weakly continuous and translation invariant valuation of degree $n$. Then there exists $c\in E$ such that $\varphi=cV_n$.
		\end{theorem}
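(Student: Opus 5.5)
Let $\varphi$ be weakly continuous, translation invariant and of degree $n$ (rational $n$-homogeneous). The plan is to show that $\varphi$ is simple, use this to show it is proportional to volume on one family of boxes, and then pass to all polytopes by decomposition. The constant will be $c:=\varphi(C)$, where $C=[0,1]^n$ is the unit cube.

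\emph{Step 1: simplicity.} Let $P$ lie in a hyperplane $H$, and let $u$ be a unit normal of $H$. By \autoref{theorem:RealHomogeneity}, the map $(\lambda,\mu)\mapsto\varphi(\lambda P+\mu[0,u])$ is a homogeneous $\R$-polynomial of degree $n$. By Minkowski additivity of the polarization (\autoref{theorem:mixValuation}), its coefficient of $\lambda^n$ is $\bar\varphi(P,\dots,P)$. For $\mu=0$ the sum is $\lambda P$, which is a set of dimension at most $n-1$. I would then use the cylinder decomposition $\lambda P+[0,u]=\bigsqcup_{i=1}^{m}(\lambda P+[\tfrac{i-1}{m}u,\tfrac im u])$. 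Together with translation invariance and the Inclusion-Exclusion Principle, this gives
\[
\varphi(\lambda P+[0,u])=m\,\varphi(\lambda P+\tfrac1m[0,u])-(m-1)\varphi(\lambda P),
\]
because the intersections of consecutive pieces are translates of $\lambda P$. Now expand both sides in $\mu$. The term of $\mu$-degree $0$ is $\lambda^n\varphi(P)$. Comparing coefficients of $\lambda^n$ gives $\varphi(P)=m\varphi(P)-(m-1)\varphi(P)$, which is vacuous. So instead I would argue by homogeneity in $\lambda$ directly. The function $\lambda\mapsto\varphi(\lambda P)$ is a valuation on $\calP(H)$ that is translation invariant and has degree $n>\dim H$. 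By the polynomiality theorem \autoref{theorem:PolynomialityMcMullen} applied inside $H\cong\R^{n-1}$, its degree is at most $n-1$. Hence the $\lambda^n$ coefficient vanishes, and $\varphi(P)=0$. So $\varphi$ is simple.

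\emph{Step 2: boxes.} For a box $B=[0,a_1e_1]+\dots+[0,a_ne_n]$ with rational $a_i>0$, decompose $B$ into translates of the small cube $\frac1qC$, where $q$ is a common denominator of the $a_i$. The pieces meet in lower-dimensional sets, so by simplicity, translation invariance and Inclusion-Exclusion we get $\varphi(B)=N\varphi(\frac1qC)=\vol(B)\,c$, using $n$-homogeneity. For real $a_i$, the map $(a_1,\dots,a_n)\mapsto\varphi(B)$ is an $\R$-polynomial by \autoref{theorem:RealHomogeneity}. It agrees with $c\,a_1\cdots a_n$ on rational points, so it agrees everywhere.

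\emph{Step 3: all polytopes.} Here I would follow the classical argument. Every polytope is a finite union of simplices meeting in lower-dimensional sets, so it suffices to handle simplices. Each simplex can be reduced, via Hadwiger's equidecomposability argument (as in \cite{SchneiderConvexbodiesBrunn2014}*{Theorem~6.4.3}), to prisms and then to boxes modulo lower-dimensional pieces. Alternatively, consider $\psi:=\varphi-cV_n$. It is simple and weakly continuous, so for a fixed admissible normal tuple, $\eta\mapsto\psi(P(\eta))$ is a continuous polynomial. One can approximate any polytope $P$ by unions of small cubes in such a way that $\psi$ applied to these unions stays controlled. I expect this last step to be the main obstacle, because weak continuity only controls parallel displacements of facets, not Hausdorff approximation by grids. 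I would therefore use the classical route: cut a simplex by hyperplanes parallel to its facets, and use weak continuity to show that $\psi(S)$ is a polynomial in the facet displacements. Combined with simplicity, this forces $\psi(tS)=t^n\psi(S)$ to also be additive under the dissections of Theorem \ref{theorem:CanonicalSimplexDecomp}. Induction on dimension then shows that $\psi$ vanishes on orthogonal simplices and hence on all simplices.
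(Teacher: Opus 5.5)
Your Steps 1 and 2 are correct. The aborted cylinder computation in Step 1 can be deleted: applying \autoref{theorem:PolynomialityMcMullen} inside $H$ is what proves simplicity. The gap is in Step 3, and it is a missing idea, not a missing detail. Step 2 shows that $\psi:=\varphi-cV_n$ vanishes on boxes with edges parallel to $e_1,\dots,e_n$. Since $\varphi$ has no rotation invariance, nothing in your sketch transports this information to other orientations.

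The Canonical Simplex Decomposition does not do it. For a simple valuation of degree $n$ it yields
\begin{align*}
	\psi(\Delta)=\left[(1-t)^n+t^n\right]\psi(\Delta)+\sum_{j=1}^{n-1}(1-t)^jt^{n-j}\,\psi(\underline{S}_j+\overline{S}_{n-j}),
\end{align*}
and $V_n$ itself satisfies this identity, so it forces nothing until the middle terms are known. In \autoref{proposition:simpleValuationsVanish} the middle terms vanish because $k<n$. Here, induction on the dimension only gives $\psi(A+B)=c_{E,F}\vol_E(A)\vol_F(B)$ for $A\in\calP(E)$, $B\in\calP(F)$, with a constant depending on the splitting $E\oplus F$. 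To conclude $c_{E,F}=0$ you need a full-dimensional body $A+B$ on which $\psi$ is known to vanish. Step 2 supplies one only when $E,F$ are coordinate subspaces, whereas \autoref{lemma:GeneralizedDecompOrthSimplices} produces orthogonal simplices in arbitrary position.

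Nor can a simplex be reduced to boxes by translation-equidecomposability alone: in the plane, a triangle and a square of equal area are not translation-equidecomposable. So degree-$n$ homogeneity has to enter at this point. Finally, the paper gives no proof of its own and only cites Hadwiger, Schneider's Theorem~6.4.3 and Alesker's Theorem~3.1.1. Appealing to Schneider's Theorem~6.4.3 in Step 3 therefore amounts to citing the statement itself.

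What is missing is shear invariance on parallelotopes; here is how it closes the argument.
\begin{itemize}
\item Let $Z(v_1,\dots,v_n)=\sum_i[0,v_i]$ with independent $v_i$. Split $[0,v_1]+[0,v_2]$ along $[0,v_1+v_2]$ and translate the triangle $[0,v_1,v_1+v_2]$ by $v_2$. Simplicity and translation invariance then give $\psi(Z(v_1+v_2,v_2,\dots,v_n))=\psi(Z(v_1,\dots,v_n))$.
\item Simplicity kills every monomial except $\lambda_1\cdots\lambda_n$ in the $\R$-polynomial $\psi(\sum_i\lambda_i[0,v_i])$. Hence $\psi(Z(\lambda_1v_1,\dots,\lambda_nv_n))=\lambda_1\cdots\lambda_n\psi(Z(v))$ for $\lambda_i>0$, and $v_i\mapsto -v_i$ is just a translation.
\item Therefore $\psi(Z(v_1+sv_2,v_2,\dots,v_n))=\psi(Z(v))$ for all real $s$. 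Since transvections generate $\mathrm{SL}(n,\R)$, this gives $\psi(Z(v))=|\det(v_1,\dots,v_n)|\,\psi(C)=0$ for every parallelotope.
\item Now compare the coefficients of $t$ in the display above for an arbitrary simplex $\Delta=\langle x_0;x_1,\dots,x_n\rangle$. This gives $n\psi(\Delta)=\psi(\underline{S}_{n-1}+\overline{S}_1)$, which is a translate of a prism $T+[0,x_n]$ with $T\subset H=\mathrm{span}\{x_1,\dots,x_{n-1}\}$.
\item The map $T\mapsto\psi(T+[0,x_n])$ is a weakly continuous, translation invariant valuation on $\calP(H)$ of degree $n-1$; only the monomial $\lambda^{n-1}\mu$ survives in $\psi(\lambda T+\mu[0,x_n])$. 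By induction it equals $c(x_n)\vol_{n-1}$, and evaluating it at a parallelotope $T\subset H$ gives $c(x_n)=0$.
\end{itemize}
Thus $\psi$ vanishes on all simplices, and hence on $\calP^n$ by triangulation. This route needs no orthogonal simplices at all.
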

	
	\subsection{Measurable or bounded valuations}
	We combine the results of the previous two subsections to obtain the required properties of measurable valuations. Since the argument applies similarly to locally bounded valuations, we include this case in results. Throughout this section, $E$ denotes a finite dimensional real vector space.

	\begin{proposition}\label{proposition:homDecompMeasuBounded}
		Let $\varphi:\calP^n\rightarrow E$ be a translation invariant valuation and $\varphi=\sum_{j=0}^n\varphi_j$ be its rational homogeneous decomposition. Assume that $\varphi$ is either
		\begin{enumerate}
			\item measurable, or
			\item locally bounded.
		\end{enumerate}
		Then $\varphi_j$ is real homogeneous for every $0\le j\le n$.
	\end{proposition}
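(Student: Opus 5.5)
The plan is to reduce the claim to the one-variable statement that, for fixed $P\in\calP^n$, the map $\lambda\mapsto\varphi_j(\lambda P)$ is real homogeneous of degree $j$. By \autoref{corollary:PropertiesHomComp}, each $\varphi_j$ is again measurable, respectively locally bounded, and it is a rational $j$-homogeneous translation invariant valuation. It therefore suffices to treat a single rational homogeneous $\psi:=\varphi_j$ and show $\psi(\lambda P)=\lambda^j\psi(P)$ for all real $\lambda\ge0$.

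Fix $P$ and consider the function $f:\R\to E$, $f(t)=\psi(e^{t}P)$. The homogeneity along rationals will be encoded through an additive structure. Write $g(\lambda):=\psi(\lambda P)$ for $\lambda\ge0$. By \autoref{theorem:PolynomialityMcMullen} applied to $P_1=P_2=P$, the map $(\lambda_1,\lambda_2)\mapsto\psi(\lambda_1P+\lambda_2P)=g(\lambda_1+\lambda_2)$ is a homogeneous polynomial of degree $j$ for rational arguments. This only sees rationals, however.

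To include real scalings, apply McMullen's theorem to $P_1=P$ and $P_2=\mu P$ for a fixed real $\mu>0$. Then $(\lambda_1,\lambda_2)\mapsto g(\lambda_1+\lambda_2\mu)$ is a polynomial in rational $\lambda_i\ge0$. Doing this for finitely many reals $\mu_1,\dots,\mu_N$ shows that $g$ restricted to the additive monoid generated by $\mu_1,\dots,\mu_N$ over $\Q_{\ge0}$ is a $\Q$-polynomial. Hence $g$ extends (via the Grothendieck-type extension, or by taking differences) to a $\Q$-polynomial $G:\R\to E$ with $G|_{[0,\infty)}=g$. More concretely, the identity \eqref{eq:polynomialabelianGroups} for $g$ holds for all nonnegative real increments, because any finitely many reals lie in such a monoid. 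A polynomial on the cone $\R_{\ge0}$ in this sense extends uniquely to a $\Q$-polynomial on $\R$, obtained by extending each symmetric multiadditive part $\lambda_k$ from \autoref{corollary:homogneousDecompPolynomials}.

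Now use the regularity. The map $\lambda\mapsto\lambda P$ is continuous from $[0,\infty)$ to $\calP^n$ and maps bounded sets to bounded sets. Therefore $g$ is Borel measurable in case (1) and bounded on bounded intervals in case (2). In particular $G$ is measurable, respectively bounded, on the nonempty open set $(0,\infty)$. \autoref{lemma:finiteDimPolynomials} then shows that $G$ is an $\R$-polynomial, hence continuous. Since $g(q)=q^j g(1)$ for rational $q\ge0$, continuity gives $g(\lambda)=\lambda^jg(1)$ for all real $\lambda\ge0$, which is real $j$-homogeneity.

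The main obstacle is the extension step: verifying that $g$ genuinely satisfies the difference identity with arbitrary real increments, not just rational ones. This has to come from McMullen's polynomiality applied with several real multiples of $P$ as the summands $P_i$. I expect it to go through because the identity involves only finitely many points $x+a_{i_1}+\dots$, all expressible as $\sum\lambda_i P_i$ with $P_i=a_iP$ and $\lambda_i\in\{0,1\}$. The coefficient vanishing needed for \eqref{eq:polynomialabelianGroups} is exactly the degree bound $\le n$ in \autoref{theorem:PolynomialityMcMullen}, applied with $n+1$ or more summands.
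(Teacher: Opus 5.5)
Your argument works, but it takes a different route from the paper.

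The paper first treats degree one. Since $\varphi_1$ is Minkowski additive by \autoref{corollary:MinkwskiAdditive1Hom}, $t\mapsto\varphi_1(tP)$ solves Cauchy's equation on $[0,\infty)$, and a measurable or locally bounded solution is linear. For $j\ge 2$ the paper applies this in each slot of the polarization $\bar{\varphi}_j$. The polarization is Minkowski additive in each argument and inherits measurability or local boundedness by \autoref{corollary:PropertiesHomComponents}, which gives $\varphi_j(tP)=\bar{\varphi}_j(tP,\dots,tP)=t^j\varphi_j(P)$.

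You instead handle all degrees at once. Feeding the real dilates $xP,a_1P,\dots,a_{n+1}P$ into \autoref{theorem:PolynomialityMcMullen} shows that $g(\lambda)=\psi(\lambda P)$ satisfies the Fr\'echet identity \eqref{eq:polynomialabelianGroups} with real increments. You then use \autoref{lemma:finiteDimPolynomials} in place of the regularity theorem for Cauchy's equation. Both proofs rest on the same trick: rational polynomiality applied to real multiples of $P$. Yours dispenses with the polarization. The paper's needs only the most classical regularity result and has no extension issue, since an additive function on $[0,\infty)$ extends trivially to an odd additive function on $\R$.

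That extension is the one step you should actually prove. \autoref{lemma:finiteDimPolynomials} needs a $\Q$-polynomial on all of $\R$, and \autoref{corollary:homogneousDecompPolynomials} is stated for $\Q$-vector spaces, so it cannot be applied to $g$ on the cone directly. Your ``main obstacle'' paragraph settles the real-increment identity, not this extension.

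One way to do it: for positive reals $\mu_1,\dots,\mu_N$, McMullen's theorem gives a polynomial $p_\mu$ of degree at most $n$ on $\Q^N$ with $g(\sum_i q_i\mu_i)=p_\mu(q)$ for $q\in\Q^N_{\ge0}$. A polynomial on $\Q^N$ that vanishes on a translate of $\Q^N_{\ge0}$ vanishes identically. Apply this to $s\mapsto p_\mu(s+w)-p_\mu(s)$ for $w$ in the kernel of $q\mapsto\sum_i q_i\mu_i$, and compare two families through their union. You get that $p_\mu(q)$ depends only on $\sum_i q_i\mu_i$ for all $q\in\Q^N$. This defines $G$ on $\R$, and $G$ satisfies \eqref{eq:polynomialabelianGroups} because any finitely many reals lie in some $\sum_i\Q\mu_i$.

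Two small points. In case (2), use a bounded interval such as $(0,1)$ rather than $(0,\infty)$. The function $t\mapsto\psi(e^tP)$ you introduce is never used.
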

	\begin{proof}
		This is clear for $j=0$ since $\varphi_0$ is a multiple of the Euler characteristic, i.e. constant on $\calP^n$. Assume that $j=1$. For every $P\in\calP^n$, the map
		\begin{align*}
			f:[0,\infty)\rightarrow\R,
			t\mapsto \varphi_1(tP)
		\end{align*}
		is additive, since for $s,t\ge0$,
		\begin{align*}
			\varphi_1((s+t)P)=\varphi_1(sP+tP)=\varphi_1(sP)+\varphi_1(tP)
		\end{align*}
		as $\varphi_1$ is Minkowski additive by Corollary \ref{corollary:MinkwskiAdditive1Hom}. Moreover, if $\varphi$ is measurable or bounded, then so is $\varphi_1$ by Corollary \ref{corollary:PropertiesHomComponents}. Thus, if $\varphi$ is locally bounded, then $f(t)=\varphi_1(tP)$ is a locally bounded solutions of Cauchy's functional equation $f(s+t)=f(s)+f(t)$ for $s,t\ge 0$ and therefore a real linear function. i.e. there is $c\in\R$ such that $\varphi(tP)=f(t)=ct$ for every $t\ge 0$.\\
		Next, note that the map $t\mapsto tP$ defines a continuous curve in $\calP^n$, so the map $t\mapsto f(t)=\varphi_1(tP)$ is measurable if $\varphi$ is measurable. In this case, $f$ is a measurable solution of Cauchy's functional equation and therefore a real linear function, i.e. $\varphi(tP)=f(t)=ct$ for every $t\ge 0$ for some $c\in\R$. This completes the case $k=1$.\\
		
		In the general case, note that
		\begin{align*}
			\varphi_j(tP)=\bar{\varphi_j}(tP,\dots,tP)
		\end{align*}
		for every $P\in\calP^n$, $t\ge 0$. Since the polarization $\bar{\varphi_j}$ is measurable or bounded if $\varphi_j$ has these properties, $\bar{\varphi_j}$ is a translation invariant and $1$ homogeneous measurable or bounded valuation in each argument. Thus the case $j=1$ implies that it is real $1$-homogeneous in each argument, and we obtain
		\begin{align*}
			\varphi_j(tP)=t^j\bar{\varphi_j}(P,\dots,P)=t^j\varphi_j(P)
		\end{align*}
		for every $P\in\calP^n$ and $t\ge 0$. Thus $\varphi_j$ is real $j$-homogeneous for every $2\le j\le n$.
	\end{proof}

	\begin{corollary}\label{corollary:measurableImpliesWeaklyCont}
		Let $\varphi:\calP^n\rightarrow E$ be a translation invariant valuation. If $\varphi$ is either measurable or bounded, then $\varphi$ is weakly continuous.
	\end{corollary}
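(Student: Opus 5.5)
The plan is to verify condition (2) of \autoref{theorem:RealHomogeneity} and then conclude weak continuity from McMullen's equivalence, with \autoref{theorem:equivJointSepCont} covering the separate-versus-joint subtlety flagged in the remark after \autoref{theorem:RealHomogeneity}. First, by \autoref{proposition:homDecompMeasuBounded}, each rational homogeneous component $\varphi_j$ is real $j$-homogeneous. Here $\varphi=\sum_{j=0}^n\varphi_j$ is the homogeneous decomposition, and $\varphi_j$ is measurable, respectively locally bounded, by \autoref{corollary:PropertiesHomComp}. Since a finite sum of $\R$-polynomials is again an $\R$-polynomial, it suffices to treat a single $\varphi_j$.

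Fix $P_1,\dots,P_N\in\calP^n$ and use the polarization $\bar{\varphi_j}$ from \autoref{theorem:mixValuation}. It is Minkowski additive in each argument, so for rational $\lambda_i\ge0$ we can expand
\begin{align*}
\varphi_j\Big(\sum_{i=1}^N\lambda_iP_i\Big)=\sum_{i_1,\dots,i_j=1}^N\bar{\varphi_j}(\lambda_{i_1}P_{i_1},\dots,\lambda_{i_j}P_{i_j}).
\end{align*}
By \autoref{corollary:PropertiesHomComponents}, $\bar{\varphi_j}$ is measurable, respectively locally bounded. For each slot, fixing the other arguments gives a translation invariant, Minkowski additive valuation of degree $1$. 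Its restriction along $t\mapsto tQ$ is then a measurable, respectively locally bounded, solution of Cauchy's equation on $[0,\infty)$, hence linear; this is the same argument as in the proof of \autoref{proposition:homDecompMeasuBounded}. Therefore $\bar{\varphi_j}(t_1Q_1,\dots,t_jQ_j)=t_1\cdots t_j\,\bar{\varphi_j}(Q_1,\dots,Q_j)$ for all real $t_i\ge0$. Since Minkowski additivity holds for all polytopes, the displayed expansion is also valid for real $\lambda_i\ge0$. Consequently $(\lambda_1,\dots,\lambda_N)\mapsto\varphi_j(\sum\lambda_iP_i)$ is a genuine real homogeneous polynomial of degree $j$ on $[0,\infty)^N$, with coefficients given by values of $\bar{\varphi_j}$, and condition (2) of \autoref{theorem:RealHomogeneity} holds.

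Applying \autoref{theorem:RealHomogeneity} gives weak continuity; $E$ is finite-dimensional, hence Hausdorff. The main subtle step is this application itself: as the preceding remark explains, McMullen's argument only yields separate continuity of $\eta\mapsto\varphi(P(\eta))$. If needed, we would supply the missing step through \autoref{theorem:equivJointSepCont}, which upgrades separate continuity to joint continuity.

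There is also a more direct alternative that avoids \autoref{theorem:RealHomogeneity} altogether. By the Khovanski\u{\i}--Pukhlikov result, $\eta\mapsto\varphi(P(\eta))$ is the restriction of a $\Q$-polynomial on $\R^N$. The set $U_{(v_1,\dots,v_N)}$ has nonempty interior, and $\eta\mapsto P(\eta)$ is continuous there, so in the measurable case this restriction is Borel measurable on an open set. In the bounded case it is bounded on a small open ball, because $P(\eta)$ then stays in a bounded region of $\calP^n$. \autoref{lemma:finiteDimPolynomials} then makes it an $\R$-polynomial, hence jointly continuous. The only care needed in this route is continuity of $\eta\mapsto P(\eta)$ on the interior of $U$, which holds where the polytopes are full-dimensional.
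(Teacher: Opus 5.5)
Your proposal is correct. Your main route is the paper's own: the paper also verifies condition (2) of \autoref{theorem:RealHomogeneity} through the real homogeneity coming from \autoref{proposition:homDecompMeasuBounded}, and then invokes McMullen's equivalence.

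Your expansion via the polarization $\bar{\varphi_j}$, with the Cauchy argument applied slot by slot, makes explicit a step the paper's one-line proof leaves implicit. That step is the passage from one-variable real homogeneity to a genuine real polynomial in $(\lambda_1,\dots,\lambda_N)$. It also makes your initial appeal to \autoref{proposition:homDecompMeasuBounded} redundant.

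Your alternative route is genuinely different and more economical. It is the proof of \autoref{theorem:equivJointSepCont}, with condition (4) or (5) of \autoref{lemma:finiteDimPolynomials} used in place of (3). It therefore bypasses McMullen's theorem, the homogeneous decomposition and Cauchy's equation altogether. Since the paper's route ultimately also rests on \autoref{theorem:equivJointSepCont} (see the remark after \autoref{theorem:RealHomogeneity}), and hence on the same Khovanski{\u i}--Pukhlikov polynomiality, the shortcut costs nothing and avoids the delicate step in McMullen's argument. The paper's route, in return, keeps the result inside McMullen's Minkowski-polynomial framework, which the paper develops anyway.

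The point you flag does hold. The interior of $U_{(v_1,\dots,v_N)}$ is exactly the set of $\eta$ for which $P(\eta)$ is full-dimensional. Near such an $\eta$, $P(\eta')$ is squeezed between two homothetic copies of $P(\eta)$ about an interior point, with ratios tending to $1$. This gives continuity in the Hausdorff metric.
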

	\begin{proof}
		In both cases, the map $(P_1,\dots,P_N)\mapsto \varphi(P_1+\dots+P_N)$ is either a measurable or locally bounded valuation in each argument. Thus $(\lambda_1,\dots,\lambda_N)\mapsto \varphi(\sum_{j=1}^N\lambda_jP_j)$ is an $\R$-polynomial in $\lambda_1,\dots,\lambda_N\ge 0$ by \autoref{proposition:homDecompMeasuBounded}. The claim follows from \autoref{theorem:RealHomogeneity}.
	\end{proof}

\section{Simple weakly continuous valuations of degree $1$}
	\label{section:CharacterizationSimpleWeak}
	The following two sections contain an improved version of a special case of Hadwiger's characterization of weakly continuous valuations from \cite{HadwigerTranslationsinvarianteadditiveund1952} for homogeneous valuations of degree $1$. In this case, Hadwiger's original proof by induction simplifies slightly, which we use to track how the desired representation formula is obtained in the induction step, in order to show that it can be obtained by evaluating the valuation in a family of simplices.\\
	
	For $n\ge 2$ let $\calU^{n-1}_n$ denote the Stiefel manifold of all $(n-1)$-frames of ordered orthonormal sets $u=(u_1,\dots,u_{n-1})$. We have a natural action of $\SO(n)$ on this space by left multiplication: $gu:=(gu_1,\dots,gu_{n-1})$ for $g\in\SO(n)$, $u\in\calU^{n-1}_n$. We will identify $\calU^{n-1}_n$ with $\SO(n)$ using the following map.
\begin{lemma}
	\label{lemma:isomorphismStiefelSOn}
	For $n\ge2$, the map
	\begin{align*}
		\SO(n)&\rightarrow \calU^{n-1}_n\\
		(u_1,\dots,u_n)&\mapsto (u_1,\dots,u_{n-1})
	\end{align*}
	is a well defined homeomorphism which commutes with the left action of $\SO(n)$ on both spaces. In particular, $\SO(n)$ operates transitively on $\calU^{n-1}_n$.
\end{lemma}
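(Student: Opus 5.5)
The plan is to regard an element $g\in\SO(n)$ as an ordered orthonormal basis $(u_1,\dots,u_n)$ given by its columns $u_i=ge_i$. With this convention the map in the statement is simply the truncation $\Phi(g)=(ge_1,\dots,ge_{n-1})$. Well-definedness is immediate: the first $n-1$ columns of an orthogonal matrix form an orthonormal $(n-1)$-frame. Equivariance follows from $\Phi(hg)=(hge_1,\dots,hge_{n-1})=h\Phi(g)$, because left multiplication by $h$ acts column by column. Continuity is clear, since $\Phi$ is the restriction of a coordinate projection $\R^{n\times n}\rightarrow\R^{n\times(n-1)}$.

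For bijectivity, take a frame $u=(u_1,\dots,u_{n-1})$ and consider the orthogonal complement of $\mathrm{span}\{u_1,\dots,u_{n-1}\}$. It is a line, so it contains exactly two unit vectors $\pm w$. The two matrices $(u_1,\dots,u_{n-1},\pm w)$ are orthogonal with determinants of opposite sign. Hence exactly one completion $u_n$ gives determinant $+1$, and this proves that $\Phi$ is both surjective and injective. The unique choice can be written as $u_n=\star(u_1\wedge\dots\wedge u_{n-1})$, the generalized cross product. Its coordinates are the signed $(n-1)\times(n-1)$ minors of the matrix $(u_1,\dots,u_{n-1})$, determined by the condition $\det(u_1,\dots,u_{n-1},x)=\langle u_n,x\rangle$ for all $x$. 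Expressed this way, the inverse map is polynomial in the entries and therefore continuous.

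An alternative route to continuity of the inverse is that $\SO(n)$ is compact, $\calU^{n-1}_n$ is Hausdorff, and a continuous bijection from a compact space to a Hausdorff space is a homeomorphism. Either argument suffices.

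Transitivity follows from the fact that $\SO(n)$ acts transitively on itself by left multiplication. Since $\Phi$ is an equivariant bijection, the action on $\calU^{n-1}_n$ is transitive as well. Concretely, for frames $u,v$ one takes $h=\Phi^{-1}(v)\Phi^{-1}(u)^{-1}$, which satisfies $hu=v$. The only step requiring any care is the sign argument for uniqueness of $u_n$ together with the explicit continuity of the inverse. The remaining steps are routine.
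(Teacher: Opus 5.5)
Your proposal is correct and follows the same route as the paper: you identify elements of $\SO(n)$ with their columns, truncate to obtain a frame, get equivariance from columnwise left multiplication, and invert by completing the frame to a positively oriented orthonormal basis. The only difference is that you supply the details the paper treats as obvious, namely the uniqueness of the oriented completion and the continuity of the inverse.
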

\begin{proof}
	The map is obviously well defined, continuous, and commutes with the action of $\SO(n)$ if we let $\SO(n)$ act on itself by left multiplication. The inverse map can be obtained by completing $u=(u_1,\dots,u_{n-1})$ to an oriented orthonormal basis.
\end{proof}

\subsection{A family of orthogonal simplices}

	For $u_0\in S^{n-1}$ consider the set $\calU^{n-1}_{n,u_0}:=\{u\in \calU^{n-1}_n: \langle u_0,u_j\rangle>0, 1\le j\le n-1\}$. For $u\in\calU^{n-1}_{n,u_0}$ and $1\le k\le n$, let 
	\begin{align*}
		P_{u,u_0}^k:&\R^n\rightarrow\left(\bigcap_{i=1}^{n-k} u_i^\perp\right)\cap u_0^\perp,\quad k\ne n,\\
		P_{u,u_0}^n:&\R^n\rightarrow u_0^\perp	
	\end{align*}
 denote the orthogonal projections. Let $u_n\in S^{n-1}$ be the unique vector such that $(u_1,\dots,u_{n})$ is a positively oriented basis of $\R^n$. We then inductively define 
	\begin{align*}
		&Q_{u_0}^1(u):= \sign(\langle u_{n},u_0\rangle)[0,u_n],\\
		&Q_{u_0}^k(u):=\conv(P_{u,u_0}^{k}(Q_{u_0}^{k-1}(u))\cup Q_{u_0}^{k-1}(u)), ~2\le k\le n,
	\end{align*}
	and set $Q_{u_0}(u):=Q_{u_0}^{n}(u)$. The following properties follow immediately from the definition.
	\begin{lemma}\label{lemma:PropertiesQ}
		Let $n\ge 2$ and $u\in \calU^{n-1}_{n,u_0}$. Then 
		\begin{align*}
			Q_{u_0}^k(u)=\sign(\langle u_n,u_0\rangle)[0, u_n,P_{u,u_0}^2(u_n),...,P_{u,u_0}^k(u_n)].
		\end{align*} In particular, the following holds:
		\begin{enumerate}
			\item $Q_{u_0}(u)$ is an orthogonal simplex of dimension $n$ for $\langle u_n,u_0\rangle\ne 0$.
			\item If $\langle u_n,u_0\rangle= 0$, then $Q_{u_0}(u)=0$.
			\item $Q_{u_0}(u)$ is contained in $\{x\in\R^n:\langle x,u_0\rangle\ge 0\}$.
		\end{enumerate}
	\end{lemma}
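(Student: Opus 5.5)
The plan is to prove the explicit formula for $Q^k_{u_0}(u)$ by induction on $k$, and then read off (1)--(3) from it. Throughout, write $\varepsilon:=\sign(\langle u_n,u_0\rangle)$, $p_1:=u_n$, and $p_k:=P^k_{u,u_0}(u_n)$ for $2\le k\le n$.

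\textbf{The formula.} For $k=1$ it is the definition: $Q^1_{u_0}(u)=\varepsilon[0,u_n]$. For the induction step, assume $Q^{k-1}_{u_0}(u)=\varepsilon[0,p_1,\dots,p_{k-1}]$. Since $P^k_{u,u_0}$ is linear, it maps this simplex onto $\varepsilon[0,P^k_{u,u_0}(p_1),\dots,P^k_{u,u_0}(p_{k-1})]$. The target subspaces are nested: the subspace $\big(\bigcap_{i=1}^{n-k}u_i^\perp\big)\cap u_0^\perp$ grows with $k$, and for $k=n$ it is $u_0^\perp$. Hence $P^k_{u,u_0}\circ P^j_{u,u_0}=P^k_{u,u_0}$ for $j\le k$, because projecting first to a smaller subspace and then onto a larger one that contains it changes nothing. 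This gives $P^k_{u,u_0}(p_j)=P^k_{u,u_0}(u_n)=p_k$ for all $j<k$. So the projected simplex is the segment $\varepsilon[0,p_k]$. Its union with $Q^{k-1}_{u_0}(u)$ has convex hull $\varepsilon[0,p_1,\dots,p_k]$, which closes the induction.

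\textbf{Property (2).} If $\langle u_n,u_0\rangle=0$, then $\varepsilon=0$. Every $Q^k_{u_0}(u)$ is then $\{0\}$, because the projection of $\{0\}$ is $\{0\}$.

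\textbf{Property (3).} First, $p_k\in u_0^\perp$ for $k\ge2$, so $\langle \varepsilon p_k,u_0\rangle=0$. Second, $\langle\varepsilon u_n,u_0\rangle=|\langle u_n,u_0\rangle|\ge0$. Every vertex of $Q_{u_0}(u)$ therefore lies in the half-space $\{x:\langle x,u_0\rangle\ge0\}$, and so does their convex hull.

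\textbf{Property (1): orthogonality.} This is the main step. Let $W_k:=\big(\bigcap_{i=1}^{n-k}u_i^\perp\big)\cap u_0^\perp$ for $2\le k\le n-1$, set $W_n:=u_0^\perp$, and put $W_1:=\R^n$ with $p_1=u_n$. The differences of consecutive vertices are $p_k-p_{k-1}=P_{W_k}(u_n)-P_{W_{k-1}}(u_n)$ for $k\ge2$ (using $P_{W_k}p_{k-1}=p_k$ as above), together with $p_1-0=u_n$. In the notation $S=\langle x_0;x_1,\dots,x_n\rangle$, the orthogonality we need concerns these difference vectors $x_k$. So we check the following.
\begin{itemize}
\item[(a)] For $2\le j<k$: $x_j=P_{W_j}u_n-P_{W_{j-1}}u_n$ lies in $W_j\subseteq W_k$. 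Meanwhile $x_k=P_{W_k}u_n-P_{W_{k-1}}u_n$ lies in $W_k\cap W_{k-1}^\perp$, since $P_{W_{k-1}}u_n=P_{W_{k-1}}P_{W_k}u_n$. But $x_j\in W_{k-1}$, so $\langle x_j,x_k\rangle=0$.
\item[(b)] The pair $x_1=u_n$ and $x_2=P_{W_2}u_n-u_n$. Here $x_2=-P_{W_2^\perp}u_n$, which does not look orthogonal to $u_n$. So the vertex order must be read in the other direction. Take $x_0=\varepsilon p_n$ and traverse $p_n,p_{n-1},\dots,p_1,0$. The differences are then $p_{k-1}-p_k\in W_k^\perp\cap W_{k-1}$, which are mutually orthogonal by the nesting argument in (a). The last edge runs from $u_n$ to $0$, giving $-u_n$. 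We have $u_n=P_{W_1}u_n$ and $u_n-P_{W_2}u_n\in W_2^\perp$, so orthogonality again follows from the nested-projection structure.
\end{itemize}
The expected obstacle is exactly this careful choice of ordering, so that the successive edge vectors lie in the mutually orthogonal pieces $W_{k-1}\ominus W_k$ of the flag $\R^n=W_1\supseteq W_2\supseteq\dots\supseteq W_n\supseteq W_{n+1}:=\{0\}$. The vertices are $P_{W_k}u_n$ for $k=1,\dots,n+1$, and the edges are $P_{W_k}u_n-P_{W_{k+1}}u_n=P_{W_k\ominus W_{k+1}}u_n$, which are mutually orthogonal.

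\textbf{Property (1): full dimension.} It remains to show that each edge is nonzero when $\langle u_n,u_0\rangle\ne0$. Each step removes one direction: a suitable combination of $u_{n-k}$ and $u_0$. The hypotheses $\langle u_0,u_j\rangle>0$, together with $\langle u_n,u_0\rangle\ne0$, should force the component of $u_n$ along that direction to be nonzero. The way to verify this is to express $u_0=\sum_j\langle u_0,u_j\rangle u_j$ in the orthonormal basis. Then $W_k$ is spanned by $u_{n-k+1},\dots,u_n$ intersected with $u_0^\perp$. With this description, the relevant components of $u_n$ are computed explicitly and are nonzero, since all coefficients of $u_0$ are nonzero. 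Thus the simplex is $n$-dimensional and orthogonal.
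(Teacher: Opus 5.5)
\textbf{Gap: the orthogonality part of (1).} The vertex order you settle on does not give pairwise orthogonal edges. At the start you correctly note that the subspaces grow, $W_2\subseteq W_3\subseteq\dots\subseteq W_n=u_0^\perp\subseteq\R^n$. But in (b) and in your summary you use the flag $\R^n=W_1\supseteq W_2\supseteq\dots\supseteq W_n\supseteq\{0\}$, which runs the wrong way.

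Consequently, for $2\le k\le n-1$ the edge $P_{W_k}u_n-P_{W_{k+1}}u_n$ equals $-P_{W_{k+1}\ominus W_k}u_n$, not $P_{W_k\ominus W_{k+1}}u_n$. Likewise, for $k\ge3$ the vector $p_{k-1}-p_k$ lies in $W_k\cap W_{k-1}^\perp$, not in $W_k^\perp\cap W_{k-1}$. Reversing the path, as in (b), cannot help, since it only negates the same edge vectors. Step (a) has a related error: $x_2=P_{W_2}u_n-u_n$ is not in $W_2$, and in fact $\langle x_2,x_k\rangle=-|x_k|^2$ for $k\ge3$.

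A concrete check: take $n=3$, $u_i=e_i$, $u_0=(1,1,1)/\sqrt3$. The vertices are $0$, $e_3$, $p_2=(0,-\tfrac12,\tfrac12)$, $p_3=(-\tfrac13,-\tfrac13,\tfrac23)$. The consecutive edges $p_2-e_3=(0,-\tfrac12,-\tfrac12)$ and $p_3-p_2=(-\tfrac13,\tfrac16,\tfrac16)$ of your path have inner product $-\tfrac16$.

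The ordering that works follows the increasing flag $\{0\}\subseteq W_2\subseteq\dots\subseteq W_n\subseteq\R^n$, so $u_n$ comes last rather than next to $p_2$: $Q_{u_0}(u)=\varepsilon\langle 0;p_2,p_3-p_2,\dots,p_n-p_{n-1},u_n-p_n\rangle$. Its edges are $P_{W_2}u_n$, then $P_{W_k\ominus W_{k-1}}u_n$ for $3\le k\le n$, then $u_n-p_n=\langle u_n,u_0\rangle u_0$, and these lie in mutually orthogonal subspaces. In the example they are $(0,-\tfrac12,\tfrac12)$, $(-\tfrac13,\tfrac16,\tfrac16)$, $(\tfrac13,\tfrac13,\tfrac13)$, which are pairwise orthogonal.

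\textbf{Nondegeneracy is only asserted.} Your nondegeneracy step (``should force'') is not carried out; with the correct ordering it is a short computation. Write $a_j=\langle u_0,u_j\rangle$ and $u_0^{(k)}=\sum_{j=n-k+1}^{n}a_ju_j$. Then $W_k=\mathrm{span}(u_{n-k+1},\dots,u_n)\cap (u_0^{(k)})^\perp$, hence $p_k=u_n-a_n u_0^{(k)}/|u_0^{(k)}|^2$. So the $u_{n-k+1}$-component of $p_2$ (for $k=2$) and of $p_k-p_{k-1}$ (for $3\le k\le n$) is $-a_na_{n-k+1}/|u_0^{(k)}|^2\ne0$, and the last edge is $a_nu_0\ne0$. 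Nonzero, pairwise orthogonal edges give an $n$-dimensional simplex.

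\textbf{Minor slip in the formula induction.} Since $W_j\subseteq W_k$, one has $P^k_{u,u_0}\circ P^j_{u,u_0}=P^j_{u,u_0}$ for $2\le j\le k$, not $P^k_{u,u_0}$; this is what your own justification actually says. Thus $P^k_{u,u_0}(Q^{k-1}_{u_0}(u))=\varepsilon[0,p_2,\dots,p_k]$ rather than the segment $\varepsilon[0,p_k]$. Its convex hull with $Q^{k-1}_{u_0}(u)$ is still $\varepsilon[0,p_1,\dots,p_k]$, so the formula and your proofs of (2) and (3) stand.
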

	
	We extend the map $Q_{u_0}:\calU^{n-1}_{n,u_0}\rightarrow\calP^n$ to $\calU^{n-1}_n$ by setting
	\begin{align*}
		Q_{u_0}(u):=Q_{u_0}(\sign(\langle u_0,u_1\rangle)u_1,\dots,\sign(\langle u_0,u_{n-1}\rangle)u_{n-1})
	\end{align*}
	if $\langle u_j,u_0\rangle\ne 0$ for all $1\le j\le n-1$, and $Q_{u_0}(u)=0$ otherwise.
	
	\begin{lemma}\label{lemma:restrQCont}
		The restriction of the map $Q_{u_0}:\calU^{n-1}_n\rightarrow\calP^n$ to 
		\begin{align*}
			\{u\in\SO(n)\cong\calU_n^{n-1}:\langle u_0,u_j\rangle \ne 0 ~\text{for}~1\le j\le n\}
		\end{align*}
		is continuous.
	\end{lemma}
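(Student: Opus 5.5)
On the set $W:=\{u\in\SO(n):\langle u_0,u_j\rangle\ne 0 \text{ for } 1\le j\le n\}$ I would write $Q_{u_0}$ as a composition of continuous maps, using \autoref{lemma:isomorphismStiefelSOn} to identify $u\in\calU^{n-1}_n$ with $(u_1,\dots,u_n)\in\SO(n)$. The first observation is that $W$ is open and splits into finitely many connected pieces. On each piece the signs $\epsilon_j:=\sign(\langle u_0,u_j\rangle)$, $1\le j\le n$, are constant, because $\langle u_0,u_j\rangle$ depends continuously on $u$ and never vanishes on $W$. So it suffices to prove continuity on each open set $W_\epsilon$ where the sign vector $\epsilon=(\epsilon_1,\dots,\epsilon_n)$ is fixed; continuity on $W$ then follows because these sets are open and cover $W$.

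Fix $\epsilon$. For $u\in W_\epsilon$ put $\tilde u:=(\epsilon_1u_1,\dots,\epsilon_{n-1}u_{n-1})\in\calU^{n-1}_{n,u_0}$, a continuous map. The vector $\tilde u_n$ completing $\tilde u$ to a positively oriented basis is $\tilde u_n=\epsilon_1\cdots\epsilon_{n-1}u_n$, so it also depends continuously on $u$. Moreover $\sign(\langle \tilde u_n,u_0\rangle)=\epsilon_1\cdots\epsilon_n$ is constant on $W_\epsilon$, and in particular nonzero. By \autoref{lemma:PropertiesQ},
\begin{align*}
	Q_{u_0}(u)=\epsilon_1\cdots\epsilon_n\,[0,\tilde u_n,P^2_{\tilde u,u_0}(\tilde u_n),\dots,P^n_{\tilde u,u_0}(\tilde u_n)].
\end{align*}
It therefore remains to check two things:
\begin{enumerate}
	\item the vertices depend continuously on $u$;
	\item the map $(x_0,\dots,x_n)\mapsto[x_0,\dots,x_n]$ from $(\R^n)^{n+1}$ to $\calP^n$ is continuous in the Hausdorff metric.
\end{enumerate}
Point (2) is standard: the Hausdorff distance between the two convex hulls is at most $\max_i|x_i-y_i|$, since each point $\sum\lambda_ix_i$ lies within $\max_i|x_i-y_i|$ of $\sum\lambda_iy_i$.

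The step I expect to need the most care is (1), the continuity of the projections $P^k_{\tilde u,u_0}$ in $u$. The target space of $P^k_{\tilde u,u_0}$ is $\bigl(\bigcap_{i=1}^{n-k}\tilde u_i^\perp\bigr)\cap u_0^\perp$, the orthogonal complement of $\mathrm{span}(\tilde u_1,\dots,\tilde u_{n-k},u_0)$. An orthogonal projection onto a subspace varies continuously with the subspace provided the dimension stays constant. So I need these $n-k+1$ vectors to be linearly independent for all $u\in W_\epsilon$. Here the sign condition helps. Suppose $u_0\in\mathrm{span}(u_1,\dots,u_{n-k})$. Then $u_0$ would be orthogonal to $u_n$, since $u_n$ is orthogonal to $u_1,\dots,u_{n-k}$ whenever $n-k\le n-1$, that is, for $k\ge 1$. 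This would give $\langle u_0,u_n\rangle=0$, contradicting $u\in W$. Hence the spanning family has constant rank $n-k+1$.

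With constant rank, the projection onto the complement is $\mathrm{Id}-A(A^TA)^{-1}A^T$, where $A$ is the matrix with columns $\tilde u_1,\dots,\tilde u_{n-k},u_0$. Since $A^TA$ is invertible on $W_\epsilon$, this expression is continuous in $u$. For $k=n$ the target space is just $u_0^\perp$, which does not depend on $u$. Thus all vertices $P^k_{\tilde u,u_0}(\tilde u_n)$ are continuous in $u$, and combining (1) and (2) gives continuity of $Q_{u_0}$ on each $W_\epsilon$, hence on $W$.
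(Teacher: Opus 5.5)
Your proposal is correct and follows the same route as the paper, which writes $Q_{u_0}(u)=\sign(\langle u_n,u_0\rangle)[0,u_n,P^2_{u,u_0}(u_n),\dots]$ on this set and observes that the vertices depend continuously on $u$. You add two details the paper leaves implicit: the constant-rank argument via $\langle u_0,u_n\rangle\neq 0$, which is what makes the projections continuous, and the sign bookkeeping. On the latter, note that $\sign(\langle\tilde u_n,u_0\rangle)\tilde u_n=\sign(\langle u_n,u_0\rangle)u_n$, so you do not actually need to split into the sets $W_\epsilon$.
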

	\begin{proof}
		This follows since $Q_{u_0}$ is given by 
		\begin{align*}
			Q_{u_0}(u)=\sign(\langle u_n,u_0\rangle)[0, u_n,P^2_u(u_n),...,P^{n-1}_u(u_n)]
		\end{align*}
		on this set, where the projections depend continuously on $u\in \SO(n)$.
	\end{proof}

\subsection{Representation of simple weakly continuous valuations of degree $1$}
Throughout this section, we assume that $E$ is a Hausdorff real topological vector space. 
\begin{lemma}
	\label{lemma:vanishingCylinder}
	Let $n\ge 2$ and $\varphi:\calP^n\rightarrow E$ be a translation invariant valuation that is rational homogeneous of degree $1$. If $\varphi$ is simple, then $\varphi$ vanishes on cylinders.
\end{lemma}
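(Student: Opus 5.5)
We have to show that $\varphi(P+I)=0$ whenever $I$ is a segment and $P$ is a polytope. A \emph{cylinder} here means a Minkowski sum $P+[0,v]$ with $P\subset v^\perp$ (or, more generally, $P$ contained in a hyperplane $H$ not parallel to $v$). The plan uses two earlier facts: \autoref{corollary:MinkwskiAdditive1Hom}, which says $\varphi$ is Minkowski additive, and the simplicity of $\varphi$.

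\textbf{Step 1.} Let $Z=P+[0,v]$ with $P$ contained in a hyperplane $H$ and $v\notin H-H$. By \autoref{corollary:MinkwskiAdditive1Hom},
\begin{align*}
\varphi(Z)=\varphi(P)+\varphi([0,v]).
\end{align*}

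\textbf{Step 2.} Both $P$ and $[0,v]$ lie in proper affine subspaces. Since $n\ge 2$, a segment lies in a line, which is a proper affine subspace. Simplicity therefore gives $\varphi(P)=0$ and $\varphi([0,v])=0$, hence $\varphi(Z)=0$.

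\textbf{Step 3 (more general cylinders).} Suppose $Z=P+Q$ with $P\in\calP(E)$ and $Q\in\calP(F)$ for complementary proper subspaces $E$ and $F$. Minkowski additivity gives $\varphi(Z)=\varphi(P)+\varphi(Q)$, and both terms vanish by simplicity.

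The only genuine subtlety is that Minkowski additivity requires rational $1$-homogeneity. That property is assumed here, so \autoref{corollary:MinkwskiAdditive1Hom} applies directly and no real obstacle remains. We also note that the argument never uses that $P$ and $v$ are transversal: the identity $\varphi(P+[0,v])=\varphi(P)+\varphi([0,v])$ holds for every $P$, so any sum of lower-dimensional polytopes is annihilated. In particular this covers prisms, which is what the subsequent argument (decomposing simplices via \autoref{theorem:CanonicalSimplexDecomp}) will use.
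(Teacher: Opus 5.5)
Your proof is correct and is essentially the paper's own argument: \autoref{corollary:MinkwskiAdditive1Hom} gives $\varphi(P+I)=\varphi(P)+\varphi(I)$, and both terms vanish by simplicity because $P$ and $I$ lie in proper affine subspaces (for $I$ this uses $n\ge 2$). Your observation that transversality is never used, so $\varphi$ vanishes on every Minkowski sum of two lower-dimensional polytopes, is also correct; one small correction to your closing remark is that the paper uses this lemma to show that $\varphi^{u_0}_u$ is well defined, not in the canonical simplex decomposition argument.
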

\begin{proof}
	Every cylinder is of the form $P+I$, where $P$ is a polytope of dimension at most $n-1$ and $I$ is an interval. Since $\varphi$ is Minkowski additive by Corollary \ref{corollary:MinkwskiAdditive1Hom}, we obtain
	\begin{align*}
		\varphi(P+I)=\varphi(P)+\varphi(I).
	\end{align*}
	However, $P$ and $I$ are both lower dimensional, so the right hand side vanishes if $\varphi$ is simple. The claim follows.
\end{proof}

\begin{definition}
	For $u_0\in S^{n-1}$, we call $P\in\calP^n$ a $u_0$-skew cylinder if there exists $u\in S^{n-1}$ and a polytope $Q\in\calP(u^\perp)$ such that
	\begin{align*}
		P=\conv(\pi_{u_0^\perp}(Q)\cup Q)+x
	\end{align*}
	for some $x\in\R^n$.
\end{definition}

\begin{lemma}
	\label{lemma:DecompSkewCylinders}
	Let $u_0\in S^{n-1}$. For every polytope $P\in\calP^n$ there exist $u_0$-skew cylinders $C_1,\dots,C_N$ such that 
	\begin{align*}
		P\sqcup\bigsqcup_{j=1}^N C_j
	\end{align*}
	is a $u_0$-skew cylinder.
\end{lemma}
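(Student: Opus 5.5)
The plan is to sandwich $P$ inside a single "straight" cylinder in direction $u_0$ and to fill the gaps with pieces that sit vertically above or below the facets of $P$. The one geometric fact used throughout is the following. If $G$ is an $(n-1)$-dimensional polytope in a hyperplane not containing $u_0$, then $G$ is the graph $\{y+a(y)u_0: y\in\pi_{u_0^\perp}(G)\}$ of an affine function $a$ over its shadow. If moreover $a\ge 0$ or $a\le 0$ on the shadow, then
\begin{align*}
	\conv(\pi_{u_0^\perp}(G)\cup G)=\{y+tu_0: y\in\pi_{u_0^\perp}(G),\ t \text{ between } 0 \text{ and } a(y)\}.
\end{align*}
Indeed, the right hand side is convex (it lies between two affine functions over a convex base) and contains both sets, and it is clearly contained in the hull. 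The first step is therefore to check that any such vertical slab, after a translation, is a $u_0$-skew cylinder in the sense of the definition. Write the affine hyperplane of $G$ as $cu_0+u^\perp$ and apply the definition to $Q:=G-cu_0\subset u^\perp$, which is again a graph over the same shadow. This bookkeeping, namely matching the affine hyperplane of $G$ with the linear hyperplane $u^\perp$ required in the definition via a translation along $u_0$ only, is the step I expect to need the most care. In particular the sign condition on $a$ must be preserved, and the shadow must not move under the translation.

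Next, set $\pi:=\pi_{u_0^\perp}$ and translate $P$ so that $\langle x,u_0\rangle>0$ on $P$; translations are harmless, since the definition allows them. Choose $h>\max_{x\in P}\langle x,u_0\rangle$. The candidate for the large skew cylinder is
\begin{align*}
	C:=\conv(\pi(P)\cup(\pi(P)+hu_0)),
\end{align*}
which is $\conv(\pi(Q)\cup Q)+hu_0$ with $Q=\pi(P)-hu_0\subset u_0^\perp$. Writing $P=\{y+tu_0:y\in\pi(P),\ g(y)\le t\le f(y)\}$ with $g$ convex and $f$ concave and piecewise affine, the facets of $P$ split into lower facets (outer normal $v$ with $\langle v,u_0\rangle<0$), upper facets ($\langle v,u_0\rangle>0$) and vertical facets.

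For each lower facet $F$ I take $C_F:=\conv(\pi(F)\cup F)$, the slab $\{0\le t\le g(y)\}$ over $\pi(F)$; by the first step it is a $u_0$-skew cylinder. For each upper facet $F$ I take $D_F:=\{y+tu_0: y\in\pi(F),\ f(y)\le t\le h\}$. This equals $\conv(\pi(F-hu_0)\cup(F-hu_0))+hu_0$, where $F-hu_0$ lies strictly below $u_0^\perp$, so again the first step applies. Vertical facets contribute nothing, since their shadows are lower dimensional.

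Finally, the shadows of the lower facets tile $\pi(P)$ with pairwise lower dimensional intersections, and so do the shadows of the upper facets. It follows that $C$ is the union of the regions $\{0\le t\le g\}$, $\{g\le t\le f\}$ and $\{f\le t\le h\}$ over $\pi(P)$, which meet only in lower dimensional sets. Hence
\begin{align*}
	C=P\sqcup\bigsqcup_{F \text{ lower}}C_F\sqcup\bigsqcup_{F \text{ upper}}D_F,
\end{align*}
which is the claim. The only remaining point to verify is that the pieces over different facets overlap only in lower dimensional sets, which follows from the tiling of $\pi(P)$ by the shadows.
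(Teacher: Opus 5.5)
Your decomposition is exactly the paper's. The paper writes $[0,b-a]u_0+\pi_a(P)=P\sqcup\bigsqcup_j C_j^+\sqcup\bigsqcup_j C_j^-$ with $C_j^-=\conv(\pi_a(F_j^-)\cup F_j^-)$ and $C_j^+=\conv(\pi_b(F_j^+)\cup F_j^+)$, and your levels $0$ and $h$ play the roles of $a$ and $b$. Your slab description of $\conv(\pi_{u_0^\perp}(G)\cup G)$ and the tiling argument are fine.

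\textbf{The translation step fails as stated.} This is the bookkeeping step you singled out. If $\mathrm{aff}(G)=cu_0+u^\perp$ and $Q:=G-cu_0$, then $\pi_{u_0^\perp}(Q)=\pi_{u_0^\perp}(G)$. So $\conv(\pi_{u_0^\perp}(Q)\cup Q)$ lies between the levels $0$ and $a-c$, and need not even be a slab if $a-c$ changes sign. No translate of it equals the slab between $0$ and $a$ unless $c=0$. For example, take $n=2$, $u_0=e_2$, $G=[(0,1),(1,2)]$. Your recipe yields the triangle $[(0,0),(1,0),(1,1)]$, whereas $\conv(\pi_{u_0^\perp}(G)\cup G)$ is the trapezoid $[(0,0),(1,0),(1,2),(0,1)]$.

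The shadow should not stay fixed; it must move rigidly with $G$. So the translation has to lie in $u_0^\perp$, where it commutes with $\pi_{u_0^\perp}$. For a non-horizontal facet the normal $u$ is not parallel to $u_0$, so you can write $\mathrm{aff}(G)=x+u^\perp$ with $x\in u_0^\perp$. Then $Q:=G-x\in\calP(u^\perp)$ satisfies $\conv(\pi_{u_0^\perp}(Q)\cup Q)+x=\conv(\pi_{u_0^\perp}(G)\cup G)$, and there is no sign condition to track. For $D_F$, choose instead $x$ with $\langle x,u_0\rangle=h$ and $\mathrm{aff}(F)=x+u^\perp$; this is what the paper's $\pi_b$ encodes.

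\textbf{The large prism is not literally a skew cylinder.} Your claim $\pi(P)-hu_0\subset u_0^\perp$ is false, and no other choice of $Q$ repairs it. For $u\neq\pm u_0$, every vertex of $\conv(\pi_{u_0^\perp}(Q)\cup Q)$ outside $u_0^\perp$ lies in one of the $(n-2)$-dimensional sets $u^\perp\cap\{\langle\cdot,u_0\rangle=t\}$. Hence such a set is either degenerate or has at most one facet orthogonal to $u_0$; for $u=\pm u_0$ it is degenerate. So an $n$-dimensional right prism is never a $u_0$-skew cylinder in the literal sense of the definition.

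The paper's proof shares this: it counts $[0,b-a]u_0+\pi_a(P)$ as a skew cylinder, so right prisms are tacitly admitted. This is harmless for the later application, because both $\varphi$ (the prism is a cylinder) and $\sum_{u\in N(P)}\varphi^{u_0}_u(F(P,u))$ (all facet normals are $\pm u_0$ or orthogonal to $u_0$) vanish on them.

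You should still avoid producing further right prisms. With your strict choices $0<\min_P\langle\cdot,u_0\rangle$ and $h>\max_P\langle\cdot,u_0\rangle$, a horizontal lower or upper facet gives another one. If you place the two levels exactly at this minimum and maximum, those pieces collapse to the facets themselves. These are degenerate skew cylinders and can simply be dropped.
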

\begin{proof}
	Assume that $P\subset \{x\in\R^n:a\le \langle x,u_0\rangle\le b\}$. Let $F_1^+,\dots,F_N^+$ and $F_1^-,\dots,F_M^-$ be the faces of $P$ such that the scalar product of their outer unit normal with $u_0$ is either strictly positive or strictly negative. If $\pi_a:\R^n\rightarrow au_0+u_0^\perp$ and $\pi_b:\R^n\rightarrow bu_0+u_0^\perp$ denote the orthogonal projections, then
	\begin{align*}
		[0,b-a]u_0+\pi_a(P)=P\sqcup\bigsqcup_{j=1}^N C_j^+\sqcup\bigsqcup_{j=1}^M C^-_j
	\end{align*}
	for the $u_0$-skew cylinders
	\begin{align*}
		&C_j^+=\conv(\pi_b(F_j^+)\cup F_j^+), &C_j^-=\conv(\pi_a(F_j^-)\cup F_j^-).
	\end{align*}
\end{proof}

\begin{definition}\label{def:Phi_u}
	Let $\varphi:\calP^n\rightarrow E$ be a function and fix $u_0\in S^{n-1}$. For every $u\in S^{n-1}$ we define  a functional $\varphi^{u_0}_u:\calP(u^\perp)\rightarrow E$ as follows: 
	\begin{enumerate}
		\item If $\langle u,u_0\rangle=0$, we set $\varphi^{u_0}_u=0$.
		\item If $\langle u,u_0\rangle>0$, then we choose for every $Q\in \calP(u^\perp)$ a vector $x_Q\in u^\perp$ such that $Q+x_Q\subset \{x\in \R^n: \langle x,u_0\rangle\ge 0\}$ and set
		\begin{align*}
			\varphi^{u_0}_u(Q):=\varphi\left(\conv(\pi_{u_0^\perp}(Q+x_Q))\cup (Q+x_Q)\right).
		\end{align*}
		\item If $\langle u,u_0\rangle<0$, we define
		\begin{align*}
			\varphi^{u_0}_u(Q):=-\varphi_{-u}(Q)\quad\text{for}~Q\in\calP(u^\perp).
		\end{align*}
	\end{enumerate}
\end{definition}

\begin{lemma}
	Let $n\ge 2$ and $\varphi:\calP^n\rightarrow E$ be a weakly continuous and translation invariant valuation homogeneous of degree $1$, and assume that $\varphi$ is simple. For every $u\in S^{n-1}$, $\varphi^{u_0}_u$ is a weakly continuous, $1$-homogeneous, translation invariant, simple valuation. Moreover, $\varphi^{u_0}_u=0$ if $\langle u_0,u\rangle=\pm 1$.
\end{lemma}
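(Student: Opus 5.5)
The plan is to treat the case $\langle u,u_0\rangle>0$ in detail. The case $\langle u,u_0\rangle=0$ is trivial, and the case $\langle u,u_0\rangle<0$ follows formally from the positive one, since $\varphi^{u_0}_u=-\varphi^{u_0}_{-u}$ and $\calP(u^\perp)=\calP((-u)^\perp)$. For $Q\in\calP(u^\perp)$ with $Q\subset\{\langle x,u_0\rangle\ge0\}$, write
\begin{align*}
T(Q):=\conv(\pi_{u_0^\perp}(Q)\cup Q)=\bigcup_{q\in Q}[\pi_{u_0^\perp}(q),q].
\end{align*}
The second description holds because $\pi_{u_0^\perp}$ is affine. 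Since $u\neq\pm u_0$ in the interesting case, every line parallel to $u_0$ meets $u^\perp$ in exactly one point. Hence $T$ is a bijective correspondence between subsets of the half-hyperplane and the "vertical" sets it produces, and it preserves unions and intersections.

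\textbf{Step 1: well-definedness (the main obstacle).} We must show that $\varphi(T(Q+x))$ does not depend on the admissible $x\in u^\perp$. Let $v$ be the normalized projection of $u_0$ onto $u^\perp$; then $\langle v,u_0\rangle>0$. Any $y\in u^\perp$ splits as $y=y_0+sv$ with $y_0\in u^\perp\cap u_0^\perp$. Shifting by $y_0$ translates $T(Q+x)$ as a whole, so translation invariance handles it. For the shift by $sv$ with $s>0$, write $v=v'+\langle v,u_0\rangle u_0$ with $v'\in u_0^\perp$. I expect the decomposition
\begin{align*}
T(Q+x+sv)=\bigl(T(Q+x)+sv\bigr)\sqcup\bigl(\pi_{u_0^\perp}(Q+x)+sv'+[0,s\langle v,u_0\rangle]u_0\bigr),
\end{align*}
where the two pieces meet in a lower-dimensional set. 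Since $\varphi$ is simple, this gives
\begin{align*}
\varphi(T(Q+x+sv))=\varphi(T(Q+x))+\varphi(\text{cylinder}),
\end{align*}
and the cylinder term vanishes by \autoref{lemma:vanishingCylinder}. Hence all choices of $x$ give the same value. The same lemma yields the last claim. If $u=\pm u_0$, then $\pi_{u_0^\perp}(Q)$ is a translate of $Q$, so $T(Q)$ is a cylinder and $\varphi^{u_0}_u=0$.

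\textbf{Step 2: algebraic properties.} Translation invariance of $\varphi^{u_0}_u$ within $u^\perp$ is immediate from Step 1. For the valuation property, suppose $Q_1\cup Q_2$ is convex. Use a common translation vector $x$, which Step 1 allows. Then $T$ commutes with union and intersection, so the valuation property of $\varphi$ transfers. Homogeneity follows from $T(\lambda Q)=\lambda T(Q)$ for $\lambda\ge0$, where the translation vector is scaled as well, together with the real $1$-homogeneity of $\varphi$. Simplicity holds because if $Q$ lies in a proper affine subspace of $u^\perp$, then $T(Q)$ is contained in a proper affine subspace of $\R^n$.

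\textbf{Step 3: weak continuity.} Let $L:\R^n\to u^\perp$ be the projection along $u_0$. For $Q=\{y\in u^\perp:\langle y,v_j\rangle\le\eta_j\}$, placed in the half-space as above, the set $T(Q)$ is described by the inequalities $\langle z,L^*v_j\rangle\le\eta_j$ for all $j$, together with $\langle z,-u_0\rangle\le0$ and $\langle z,u\rangle\le0$. These normals are fixed, and the heights depend affinely on $\eta$; one can take the translation $x_Q$ as a fixed multiple of $v$ depending continuously on $\eta$. Therefore $\eta\mapsto\varphi^{u_0}_u(Q(\eta))$ is the composition of a continuous map with the jointly continuous function that weak continuity of $\varphi$ provides for the admissible tuple $(L^*v_1/|L^*v_1|,\dots,-u_0,u)$.
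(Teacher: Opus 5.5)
Your proof is correct and follows the same route as the paper. You reduce to $\langle u,u_0\rangle>0$ and prove independence of the translation by splitting the shifted skew cylinder into a translate of the old one plus a genuine cylinder, which \autoref{lemma:vanishingCylinder} kills; simplicity comes from containment in a proper affine subspace, and homogeneity from scaling the translation vector. Your explicit inequality description of $T(Q)$ with normals $L^*v_j/|L^*v_j|$, $-u_0$, $u$ usefully fills in the weak-continuity and valuation steps that the paper calls immediate. Two small corrections: the unique intersection of lines parallel to $u_0$ with $u^\perp$ comes from $\langle u,u_0\rangle\neq 0$, not from $u\neq\pm u_0$. And for $u=\pm u_0$ one simply has $T(Q)=Q$, which is lower dimensional, rather than a cylinder.
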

\begin{proof}
	Note that it is enough to show the result for all $u\in S^{n-1}$ with $\langle u,u_0\rangle>0$. Let us first show that for $Q\in \calP(u^\perp)$, the value of 
	\begin{align}
		\label{eq:constructionPhi_u}
		\varphi^{u_0}_u(Q):=\varphi\left(\conv(\pi_{u_0^\perp}(Q+x_Q)\cup (Q+x_Q)\right).
	\end{align}
	does not depend on the choice of $x_Q\in u^\perp$ with $Q+x_Q\subset \{x\in \R^n: \langle x,u_0\rangle\ge 0\}$. If $x_Q'\in u^\perp$ is another such vector, then we may without loss of generality assume that $x'_Q=x_Q+x$ for some $x\in u^\perp$ with $\langle x,u_0\rangle\ge0$. We write $x=\tilde{x}+\langle x,u_0\rangle u_0$ for $\tilde{x}\in u_0^\perp$. Then
	\begin{align*}
		&\conv(\pi_{u_0^\perp}(Q+x'_Q)\cup (Q+x'_Q)=\conv([\pi_{u_0^\perp}(Q+x_Q)+\tilde{x}]\cup [(Q+x_Q)+x])\\
		=&\conv([\pi_{u_0^\perp}(Q+x_Q)+x]\cup [(Q+x_Q)+x])\sqcup\left(\pi_{u_0^\perp}(Q+x_Q)\times[0,\langle x_0,u_0\rangle u_0]\right),
	\end{align*}
	where the second set is a cylinder. Since $\varphi$ is a simple and $1$-homogeneous valuation, it vanishes on cylinders by \autoref{lemma:vanishingCylinder}. Thus the valuation property implies
	\begin{align*}
		\varphi(\conv(\pi_{u_0^\perp}(Q+x'_Q)\cup (Q+x'_Q)))=&\varphi(\conv(\pi_{u_0^\perp}(Q+x_Q)\cup (Q+x_Q))+x)\\
		=&\varphi(\conv(\pi_{u_0^\perp}(Q+x_Q)\cup (Q+x_Q))),
	\end{align*}
	where we used that $\varphi$ is translation invariant in the last step. Thus Eq.~\eqref{eq:constructionPhi_u} does not depend on the choice of $x_Q\in u^\perp$ with $Q+x_Q\subset \{x\in \R^n: \langle x,u_0\rangle\ge 0\}$. From this one immediately deduces that $\varphi^{u_0}_u$ is a translation invariant and weakly continuous valuation on $\calP(u^\perp)$.\\
	In order to see that $\varphi^{u_0}_u$ is simple, note that for any $Q\in\calP(u^\perp)$ contained in an $(n-2)$-dimensional subspace $H\subset u^\perp$, we may choose $x_Q\in H$ such that $Q+x_Q\subset \{x\in \R^n: \langle x,u_0\rangle\ge 0\}$. In particular the set $ \conv(\pi_{u_0^\perp}(Q+x_Q)\cup (Q+x_Q))$ is contained in $H+\R u_0$, which is a subspace of dimension at most $n-1$, so $\varphi$ vanishes on this set. Thus $\varphi^{u_0}_u(Q)=0$, and we see that $\varphi^{u_0}_u$ is simple. Finally, let us show that $\varphi^{u_0}_u$ is $ 1$-homogeneous. For $t\ge 0$, the set $tQ+tx_Q$ is contained in $\{x\in \R^n: \langle x,u_0\rangle\ge 0\}$, so by definition
	\begin{align*}
		\varphi^{u_0}_u(tQ)=&\varphi(\conv(\pi_{u_0^\perp}(tQ+tx_Q)\cup (tQ+tx_Q)))\\
		=&\varphi\left(t\left(\conv(\pi_{u_0^\perp}(Q+x_Q)\cup (Q+x_Q))\right)\right)\\
		=&t\varphi\left(\conv(\pi_{u_0^\perp}(Q+x_Q)\cup (Q+x_Q))\right)
	\end{align*}
	since $\varphi$ is $1$-homogeneous. Thus $\varphi^{u_0}_u(tQ)=t\varphi^{u_0}_u(Q)$, and we see that $\varphi^{u_0}_u$ is $1$-homogeneous.
\end{proof}
\begin{proposition}
	\label{proposition:HadwigerWeakSimple1HomCase}
	Let $n\ge 2$ and $\varphi:\calP^n\rightarrow E$ be a weakly continuous, translation invariant, $1$-homogeneous, and simple valuation. Fix $u_0\in S^{n-1}$. Then
	\begin{align*}
		\varphi(P)=\sum_{u\in N(P)}\varphi^{u_0}_{u}(F(P,u))
 	\end{align*}
 	for all $P\in\calP^n$.
\end{proposition}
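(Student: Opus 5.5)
We prove the formula by showing that both sides define the same functional. The approach is to compare $P$ with a $u_0$-skew cylinder via \autoref{lemma:DecompSkewCylinders}, and to compute $\varphi$ directly on skew cylinders. Here $N(P)$ denotes the set of outer unit facet normals of $P$ and $F(P,u)$ the facet with normal $u$. Write $\psi(P):=\sum_{u\in N(P)}\varphi^{u_0}_u(F(P,u))$. Facets with $\langle u,u_0\rangle=0$ contribute nothing, by definition.

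\emph{Step 1: $\psi$ is a simple valuation.} The map $P\mapsto \sum_{u\in N(P)} \varphi^{u_0}_u(F(P,u))$ is a valuation on $\calP^n$, because each $\varphi^{u_0}_u$ is a simple valuation on $\calP(u^\perp)$ by the preceding lemma. This is the standard argument for sums over facets of simple valuations: when we glue $K\cup L$, the facets with normal $u$ satisfy $F(K\cup L,u)\cup F(K\cap L,u)$ versus $F(K,u)\cup F(L,u)$ up to lower dimensional pieces. Simplicity of $\varphi^{u_0}_u$ on $u^\perp$ kills those pieces, as well as the facets appearing only on the cutting hyperplane. On the cutting hyperplane $H$ the two opposite normals contribute $\varphi^{u_0}_u(F)+\varphi^{u_0}_{-u}(F)=0$ by the odd definition in \autoref{def:Phi_u}. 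It is also clear that $\psi$ is translation invariant and vanishes on lower dimensional $P$: such a $P$ has at most two facet normals $\pm u$ with the same facet, and these cancel.

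\emph{Step 2: reduction to skew cylinders.} By \autoref{lemma:DecompSkewCylinders}, $P\sqcup\bigsqcup_j C_j=C$ with $C,C_j$ being $u_0$-skew cylinders. Both $\varphi$ and $\psi$ are simple valuations, so by the Inclusion-Exclusion Principle (\autoref{theorem:InclusionExclusionPrinciple}) they are additive on such dissections. Hence it suffices to prove $\varphi(C)=\psi(C)$ for every $u_0$-skew cylinder $C=\conv(\pi_{u_0^\perp}(Q)\cup Q)+x$ with $Q\in\calP(u^\perp)$. By translation invariance we may assume $Q\subset\{\langle\cdot,u_0\rangle\ge0\}$ and, replacing $u$ by $-u$ if needed, $\langle u,u_0\rangle\ge 0$. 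If $\langle u,u_0\rangle=0$ or $Q$ is lower dimensional, then $C$ is a cylinder or lower dimensional, and both sides vanish: $\varphi$ by \autoref{lemma:vanishingCylinder}, and $\psi$ by the argument below.

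\emph{Step 3: evaluation of $\psi$ on skew cylinders.} For $\langle u,u_0\rangle>0$ and $n-1$-dimensional $Q$, the facets of $C$ fall into three groups.
\begin{enumerate}
\item $Q$ itself, with normal $\pm u$ (the sign making the normal point away from $u_0^\perp$). It contributes $\varphi^{u_0}_u(Q)=\varphi(C)$ by definition.
\item The base $\pi_{u_0^\perp}(Q)$, with normal $-u_0$. It contributes $0$, since $\varphi^{u_0}_{\pm u_0}=0$ by the preceding lemma.
\item The lateral facets $\conv(\pi_{u_0^\perp}(G)\cup G)$ for facets $G$ of $Q$. Their normals are orthogonal to $u_0$, so they contribute $0$.
\end{enumerate}
Thus $\psi(C)=\varphi(C)$. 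The sign bookkeeping needs care here: the outer normal of the top facet must be $u$ with $\langle u,u_0\rangle>0$. This holds because $C$ lies below $Q$ in direction $u_0$.

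\emph{Main obstacle.} The delicate point is Step 1, namely verifying that $\psi$ is genuinely a valuation and simple. This forces care in the cancellation along cutting hyperplanes, and it uses both the oddness $\varphi^{u_0}_{-u}=-\varphi^{u_0}_u$ and simplicity of each $\varphi^{u_0}_u$. If the direct check becomes messy, the fallback is to verify additivity only under dissection by a hyperplane, which suffices for simple valuations in the decomposition of Step 2.
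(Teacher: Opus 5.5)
Your overall strategy is the paper's: define $\psi$, note that it is a simple valuation, check $\psi=\varphi$ on $u_0$-skew cylinders, and conclude via Lemma~\ref{lemma:DecompSkewCylinders} and inclusion--exclusion. The paper asserts the agreement on skew cylinders ``by construction''. The gap is in your verification of that agreement.

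Your reduction ``by translation invariance we may assume $Q\subset\{\langle\cdot,u_0\rangle\ge 0\}$'' is false. The polytope $\conv(\pi_{u_0^\perp}(Q)\cup Q)$ is built by projecting onto the fixed linear hyperplane $u_0^\perp$. Translating $C$ moves $Q$ and the flat facet $\pi_{u_0^\perp}(Q)$ together, so it cannot change whether $C$ lies below $Q$ or above $Q$. Replacing $u$ by $-u$ does not change $C$ at all. These two kinds of skew cylinder are genuinely different shapes, and Lemma~\ref{lemma:DecompSkewCylinders} needs both. The pieces $C_j^-$ are of the kind you treat. The pieces $C_j^+=\conv(\pi_b(F_j^+)\cup F_j^+)$, however, lie above the facet $F_j^+$ and below their flat facet in $bu_0+u_0^\perp$. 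Your Step~3 only handles the first kind.

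For the second kind, normalize so that $C=\conv(\pi_{u_0^\perp}(Q)\cup Q)$ with $Q\subset\{\langle\cdot,u_0\rangle\le 0\}$ $(n-1)$-dimensional, and let $w$ be the outer normal of $C$ at $Q$. Since $C$ lies above $Q$, we have $\langle w,u_0\rangle<0$ in the nondegenerate case. Item~(3) of Definition~\ref{def:Phi_u} then gives $\psi(C)=\varphi^{u_0}_{w}(Q)=-\varphi^{u_0}_{-w}(Q)$. The facet $\pi_{u_0^\perp}(Q)$ now has normal $u_0$ and the lateral facets have normals orthogonal to $u_0$, so they contribute nothing. You must therefore show $\varphi(C)=-\varphi^{u_0}_{-w}(Q)$.

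If $w=-u_0$, then $C$ is a right cylinder and both sides vanish. Otherwise choose $x\in w^\perp$ with $s:=\langle x,u_0\rangle>0$ so large that $Q+x\subset\{\langle\cdot,u_0\rangle\ge 0\}$. Since $\pi_{u_0^\perp}(Q+x)-x=\pi_{u_0^\perp}(Q)-su_0$, the polytopes $C$ and $\conv(\pi_{u_0^\perp}(Q+x)\cup(Q+x))-x$ dissect the right cylinder $\pi_{u_0^\perp}(Q)+[-s,0]u_0$ along $Q$. Lemma~\ref{lemma:vanishingCylinder}, simplicity and translation invariance then give $\varphi(C)=-\varphi(\conv(\pi_{u_0^\perp}(Q+x)\cup(Q+x)))$. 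This equals $-\varphi^{u_0}_{-w}(Q)$ by item~(2) of Definition~\ref{def:Phi_u} and the independence of the choice of $x_Q$ shown right after it. This is the one place where the sign convention for $\langle u,u_0\rangle<0$ is needed to match $\varphi$ with $\psi$, and not only to make $\psi$ simple.

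By contrast, Step~1 is not the real obstacle. Let $K,L$ be polytopes with $K\cup L$ convex. If $h_K(u)=h_L(u)$, then $F(K\cup L,u)=F(K,u)\cup F(L,u)$ and $F(K\cap L,u)=F(K,u)\cap F(L,u)$. If $h_K(u)\ne h_L(u)$, then $\{F(K\cup L,u),F(K\cap L,u)\}=\{F(K,u),F(L,u)\}$. So the valuation property of $\psi$ holds termwise in $u$. No cancellation bookkeeping along cutting hyperplanes and no fallback are required.
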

\begin{proof}
	Set 
	\begin{align*}
		\tilde{\varphi}(P):=\sum_{u\in N(P)}\varphi^{u_0}_{u}(F(P,u))=\sum_{u\in S^{n-1}}\varphi^{u_0}_{u}(F(P,u))
	\end{align*} for $P\in\calP^n$, where the second equation follows since $\varphi^{u_0}_u$ is simple, so only a finite number of terms appear in the sum. Then $\tilde{\varphi}$ is a weakly continuous and simple valuation and by construction, $\varphi$ coincides with $\tilde{\varphi}$ an all $u_0$-skew cylinders. By \autoref{lemma:DecompSkewCylinders}, there exist $u_0$-skew cylinders $C_1,\dots,C_N$ such that 
	\begin{align*}
		P\sqcup\bigsqcup_{j=1}^N C_j
	\end{align*}
	is a $u_0$-skew cylinder. Since $\varphi$ and $\tilde{\varphi}$ are both simple valuations, we therefore obtain
	\begin{align*}
		\varphi(P)=\varphi\left(P\sqcup\bigsqcup_{j=1}^N C_j\right)-\sum_{j=1}^N\varphi(C_j)=\tilde{\varphi}
		\left(P\sqcup\bigsqcup_{j=1}^N C_j\right)-\sum_{j=1}^N\tilde{\varphi}(C_j)=\tilde{\varphi}(P),
	\end{align*}
	which shows the desired formula. Note that we used the Inclusion-Exclusion Principle from \autoref{theorem:InclusionExclusionPrinciple} in this step.
\end{proof}

For a polytope $P\in\calP^n$ and $u\in S^{n-1}$, we define \begin{align*}
	F(P,u)=\{x\in P: \langle x,u\rangle=h_P(u)\}
\end{align*} to be the face in direction $u$. If $u=(u_1,\dots,u_{n-1})\in\calU^{n-1}_n$, we inductively set
\begin{align*}
	F(P,(u_1,\dots,u_k)):=F(F(P,(u_1,\dots,u_{k-1})),u_{k})\quad \text{for}~2\le k\le n-1.
\end{align*}
Then $u\in\calU^{n-1}_n$ is called \emph{$P$-tight} if $F(P,(u_1,\dots,u_k))$ is an $(n-k)$-dimensional polytope for all $1\le k\le n-1$. We denote by $\calU^{n-1}_n(P)$ the finite set of $P$-tight elements in $\calU^{n-1}_n$.\\

The following result is the key technical ingredient. It provides an explicit form of Hadwiger's characterization from \cite{HadwigerTranslationsinvarianteadditiveund1952} for $1$-homogeneous valuations.
\begin{theorem}
	\label{theorem:1HomSimple}
	Let $n\ge 2$ and $\varphi:\mathcal{P}^n\rightarrow E$ be a translation invariant, simple, and weakly continuous valuation. For $u_0\in S^{n-1}$ define $f:\calU^n_{n-1}\rightarrow E$ by
		\begin{align*}
		f(u)=\left(\prod_{j=1}^{n-1}\sign(\langle u_0,u_j\rangle)\right)\varphi(Q_{u_0}(u)).
	\end{align*}
	Then for every $P\in\calP^n$, 
	\begin{align}
		\label{eq:formulaWeakSimpleRepresentation}
		\varphi(P)=&\sum_{u\in \mathcal{U}^{n-1}_n(P)}f(u)\vol_{1}(F(P,u)).
	\end{align}
\end{theorem}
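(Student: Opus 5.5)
The plan is to argue by induction on $n$, unwinding the representation of \autoref{proposition:HadwigerWeakSimple1HomCase} one face direction at a time. Throughout I assume, as in the rest of this section, that $\varphi$ is homogeneous of degree $1$. By \autoref{proposition:HadwigerWeakSimple1HomCase} we have
\begin{align*}
\varphi(P)=\sum_{u_1\in N(P)}\varphi^{u_0}_{u_1}(F(P,u_1)).
\end{align*}
By the lemma following \autoref{def:Phi_u}, each $\varphi^{u_0}_{u_1}$ is a weakly continuous, translation invariant, $1$-homogeneous, simple valuation on $\calP(u_1^\perp)\cong\calP^{n-1}$. It vanishes if $u_1\perp u_0$ or $u_1=\pm u_0$.

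\textbf{Base case $n=2$.} Here $u_1^\perp$ is a line. A simple, $1$-homogeneous, weakly continuous, translation invariant valuation on a line has top degree, so by \autoref{theorem:HadwigerTopDegree} it equals $c(u_1)\vol_1$. Evaluating on a unit segment gives $c(u_1)=\varphi^{u_0}_{u_1}([0,u_2])$, with $u_2$ completing $u_1$ to a positively oriented basis. The construction in \autoref{def:Phi_u} turns this into $\varphi$ evaluated on the right triangle spanned by the segment and its projection to $u_0^\perp$. Up to translation, which is harmless by translation invariance, this triangle is $\pm Q_{u_0}(u_1)$. The sign rule $\varphi^{u_0}_{-u}=-\varphi^{u_0}_u$ should match exactly the prefactor $\sign\langle u_0,u_1\rangle$ together with the sign built into $Q^1_{u_0}$. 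This yields \eqref{eq:formulaWeakSimpleRepresentation} for $n=2$, since the $P$-tight frames are exactly the edge normals $u_1$.

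\textbf{Induction step.} For $u_1\ne\pm u_0$ not orthogonal to $u_0$, let $u_0'$ be the normalized orthogonal projection of $u_0$ to $u_1^\perp$. Apply the induction hypothesis to $\varphi^{u_0}_{u_1}$ on $u_1^\perp$ with reference vector $u_0'$ and with the orientation of $u_1^\perp$ induced by $u_1$. This writes $\varphi^{u_0}_{u_1}(F(P,u_1))$ as a sum over $F(P,u_1)$-tight frames $(u_2,\dots,u_{n-1})$ in $u_1^\perp$. Two facts make the sums combine into the sum over $\calU^{n-1}_n(P)$:
\begin{itemize}
\item[(i)] Such frames, prefixed by $u_1$, are exactly the $P$-tight frames of $\R^n$.
\item[(ii)] For $j\ge 2$ we have $\langle u_0',u_j\rangle=\langle u_0,u_j\rangle/\|\pi_{u_1^\perp}u_0\|$, since $u_j\perp u_1$. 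Hence the signs $\sign\langle u_0',u_j\rangle$ agree with $\sign\langle u_0,u_j\rangle$.
\end{itemize}
The remaining task is to check that
\begin{align*}
\sign\langle u_0,u_1\rangle\,\varphi^{u_0}_{u_1}\bigl(Q_{u_0'}(u_2,\dots,u_{n-1})\bigr)=\varphi\bigl(Q_{u_0}(u)\bigr)\cdot(\text{sign factor}).
\end{align*}
Unwinding \autoref{def:Phi_u}, the left side is $\varphi$ of $\conv\bigl(\pi_{u_0^\perp}(Q')\cup Q'\bigr)$ with $Q'=Q_{u_0'}(u_2,\dots,u_{n-1})$, after a translation we may ignore. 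One then has to verify that this is, up to translation, the simplex obtained by one more step of the recursive definition of $Q_{u_0}$, namely $Q^n_{u_0}=\conv\bigl(P^n_{u,u_0}(Q^{n-1}_{u_0})\cup Q^{n-1}_{u_0}\bigr)$.

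For this I would use the vertex description in \autoref{lemma:PropertiesQ}, $[0,u_n,P^2(u_n),\dots,P^k(u_n)]$. The point is that projecting successively onto $\bigcap_{i\le n-k}u_i^\perp\cap u_0^\perp$ inside $u_1^\perp$ coincides with the corresponding projections in $\R^n$. The reason is that, inside $u_1^\perp$, the hyperplane $u_0'^\perp$ equals $u_0^\perp\cap u_1^\perp$. The last projection is onto $u_0^\perp$, which is exactly the extra step coming from \autoref{def:Phi_u}. Finally, the degenerate frames with some $\langle u_0,u_j\rangle=0$ contribute nothing on either side, because $f(u)=0$ there and $\varphi^{u_0}_{u_1}=0$ when $u_1\perp u_0$.

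\textbf{Main obstacle.} I expect the hardest part to be this bookkeeping of projections, orientations and signs. In particular, the positively oriented completion $u_n$ of $(u_2,\dots,u_{n-1})$ in $u_1^\perp$, with the induced orientation, must match the completion in $\R^n$. The reflections $u_j\mapsto\sign(\langle u_0,u_j\rangle)u_j$ used to extend $Q_{u_0}$ can flip that orientation, and this must be reconciled with the sign of $\langle u_n,u_0\rangle$ built into $Q^1_{u_0}$. I would handle it by first proving the identity for $u\in\calU^{n-1}_{n,u_0}$, where all signs are $+1$. I would then treat the general case through the odd extension $\varphi^{u_0}_{-u}=-\varphi^{u_0}_u$, checking that each reflection changes both sides by the same sign.
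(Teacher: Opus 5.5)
Your proposal is correct and follows the paper's own proof essentially step by step: induction on $n$ starting from \autoref{proposition:HadwigerWeakSimple1HomCase}, the base case via \autoref{theorem:HadwigerTopDegree} and evaluation on $[0,u_2]$, and in the induction step the normalized projection $\tilde u_0$ of $u_0$ to $u_1^\perp$, the sign identity for $j\ge 2$, and the fact that the projections inside $u_1^\perp$ agree with the $P^k_{u,u_0}$, while \autoref{def:Phi_u} supplies the final projection onto $u_0^\perp$. You are also right that $1$-homogeneity must be assumed, as the paper's proof does. The orientation bookkeeping you single out as the main obstacle is in fact vacuous: the vertices $\sign(\langle u_n,u_0\rangle)u_n$ and $\sign(\langle u_n,u_0\rangle)P^k_{u,u_0}(u_n)$ are unchanged under $u_n\mapsto -u_n$, and the $P^k_{u,u_0}$ only see the lines $\mathbb{R}u_i$, so $Q_{u_0}(u)$ does not depend on the completion or on sign flips of the frame; likewise, in the base case the triangle is a translate of exactly $Q_{u_0}(u)$, not merely of $\pm Q_{u_0}(u)$.
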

\begin{proof}
	We establish \eqref{eq:formulaWeakSimpleRepresentation} by induction on the dimension. Thus let $n=2$ and $\varphi:\calP^2\rightarrow E$ a weakly continuous, simple and $1$-homogeneous valuation. By \autoref{proposition:HadwigerWeakSimple1HomCase}, we have
	\begin{align*}
		\varphi(P)=\sum_{u\in N(P)}\varphi^{u_0}_{u}(F(P,u)),
	\end{align*}
	where $\varphi^{u_0}_u$ is a $1$-homogeneous and weakly continuous valuation on $u^\perp$. By \autoref{theorem:HadwigerTopDegree}, $\varphi^{u_0}_u=c(u)\vol_1$ for a constant $c(u)\in E$. But then
	\begin{align*}
		c(u)=\varphi^{u_0}_u([0,u^\perp])
	\end{align*}
	for the unique $u^\perp\in S^1$ such that $(u,u^\perp)\in \SO(2)$. From the definition of $Q_{u_0}$ and $\varphi^{u_0}_u$, we thus obtain for $\langle u_0,u\rangle \ne \pm 1$
	\begin{align*}
		&c(u)=\varphi^{u_0}_u([0,u^\perp])\\
		=&\sign(\langle u_0,u\rangle)\varphi(\conv(\pi_{u_0^\perp}(\sign(\langle u^\perp,u_0\rangle)[0,u^\perp])\cup \sign(\langle u^\perp,u_0\rangle)[0,u^\perp])\\
		=&\sign(\langle u_0,u\rangle)\varphi(Q_{u_0}(u)).
	\end{align*}
	For $\langle u_0,u\rangle= \pm 1$, $\varphi^{u_0}_u=0$ and $Q_{u_0}=0$ since $\langle u_0,u^\perp\rangle=0$ in this case. Thus for every $u\in S^1$,
	\begin{align*}
		c(u)=\sign(\langle u_0,u\rangle)\varphi(Q_{u_0}(u)),
	\end{align*}
	and we obtain
	\begin{align*}
		\varphi(P)=\sum_{u\in \calU^1_2(P)}\sign(\langle u_0,u\rangle)\varphi(Q_{u_0}(u))\vol_1(F(P,u)),
	\end{align*}
	which shows the claim for $n=2$.\\
	Now assume that the claim holds in all dimensions up to $n-1$, $n\ge 3$. If $\varphi:\calP^n\rightarrow E$ is a weakly continuous, translation invariant, simple and $1$-homogeneous valuation, we may again write
	\begin{align*}
		\varphi(P)=\sum_{u_1\in N(P)}\varphi^{u_0}_{u_1}(F(P,u_1)),
	\end{align*}
	where $\varphi^{u_0}_u$ is defined as in \autoref{def:Phi_u}. Then $\varphi^{u_0}_u$ is a weakly continuous, translation invariant, simple and $1$-homogeneous valuation on $\calP(u_1^{\perp})$. Moreover, $\varphi^{u_0}_{u_1}=0$ if $\langle u_0,u_1\rangle\in\{-1,0,1\}$. Thus assume that $\langle u_0,u_1\rangle\notin\{-1,0,1\}$. Then the projection of $u_0$ onto $u_1^{\perp}$ is nontrivial and we let $\tilde{u}_0\in u_1^{\perp}$, $|\tilde{u}_0|=1$, denote the normalized projection of $u_0$ onto $u_1^\perp$. The induction assumption applied to $\varphi^{u_0}_{u_1}$ shows that for $\tilde{P}\in\calP(u_1^{\perp})$,
	\begin{align}\label{eq:inductionRep1Hom}
		\varphi^{u_0}_u(\tilde{P})=\sum_{\substack{\hat{u}\in \calU^{n-2}_{u_1^{\perp}}(\tilde{P}),\\ \hat{u}=(u_2,\dots,u_{n-1})}}\left(\prod_{j=2}^{n-1}\sign(\langle \tilde{u}_0,u_j\rangle)\right)\varphi^{u_0}_u(\tilde{Q}_{\tilde{u}_0}(\hat{u}))\vol_1(F(\tilde{P},\hat{u})),
	\end{align}
	where $\calU^{n-2}_{u_1^{\perp}}$ denotes the Stiefel manifold of $(n-2)$-frames in $u_1^{\perp}$, $\calU^{n-2}_{u_1^{\perp}}(\tilde{P})$ the set of $\tilde{P}$-tight elements, and $\tilde{Q}_{\tilde{u}_0}(\hat{u})\subset u_1^{\perp}$ is the simplex defined as in \autoref{lemma:PropertiesQ}. \\ 
	Note that for $2\le j\le n-1$, we have
	\begin{align}
		\label{eq:sameSigns}
		\sign(\langle u_j,\tilde{u}_0\rangle)=\sign(\langle u_j,u_0\rangle)
	\end{align}
	as $\tilde{u}_0$ is just the normalized projection of $u_0$ onto $u_1^\perp$ and $u_j$ is orthogonal to $u_1$ for $2\le j\le n-1$.\\
	Let $u_n\in u_1^{\perp}$ be the unique vector such that $(u_1,u_2,\dots,u_n)$ is an orthonormal basis of $\R^n$ and set $u=(u_1,\dots,u_{n-1})\in \mathcal{U}^n_{n-1}$. \autoref{lemma:PropertiesQ} shows that
	\begin{align*}
		\tilde{Q}_{\tilde{u}_0}(\hat{u})=\sign(\langle u_0,u_n\rangle)[0,u_n,\tilde{P}^2_{\hat{u},\tilde{u}_0}(u_n),\dots,\tilde{P}^{n-1}_{\hat{u},\tilde{u}_0}(u_n)]
	\end{align*}
	where $\tilde{P}^k_{\hat{u},\tilde{u}_0}:u_1^{\perp}\rightarrow \left(\bigcap_{i=2}^{n-k} u_i^\perp\right)\cap \tilde{u}_0^\perp\subset u_1^{\perp}$ is the orthogonal projection. In particular, for $k=2,\dots, n-1$,
	\begin{align*}
		P^k_{u,u_0}:\R^n\rightarrow \left(\bigcap_{i=1}^{n-k} u_i^\perp\right)\cap u_0^\perp=u_1^{\perp}\cap\left(\bigcap_{i=2}^{n-k} u_i^\perp\right)\cap \tilde{u}_0^\perp
	\end{align*}
	coincides with $\tilde{P}^k_{\hat{u},\tilde{u_0}}$ on $u_1^{\perp}$ since both are orthogonal projections onto the same subspace.
	If $\pi_{u_0}$ denotes the orthogonal projection onto $u_0^\perp$, this implies for $2\le k\le n-1$,  
	\begin{align*}
		\pi_{u_0}(\tilde{P}^{k}_{\hat{u},\tilde{u}_0}(u_n))=\tilde{P}^{k}_{\hat{u},\tilde{u}_0}(u_n)=P^{k}_{u,u_0}(u_n),
	\end{align*}
	since this point is already contained in $u_0^\perp$, while 
	\begin{align*}
		\pi_{u_0}(u_n)=P^n_{u,u_0}(u_n)
	\end{align*}
	by the definition of $P^n_{u,u_0}$. Thus
	\begin{align*}
		&\mathrm{conv}\left(\pi_{u_0}(\tilde{Q}_{\tilde{u}_0}(\hat{u}))\cup \tilde{Q}_{\tilde{u}_0}(\hat{u})\right)\\
		=&\sign(\langle u_0,u_n\rangle)[0,u_n,P^2_{u,u_0}(u_n),\dots,P^n_{u,u_0}(u_n)]=Q_{u_0}(u),
	\end{align*}
	and from the definition of $\varphi^u_{u_0}$, we thus obtain
	\begin{align*}
		\varphi^{u_0}_u(\tilde{Q}_{\tilde{u}_0}(\hat{u}))=\sign(\langle u_0,u_1\rangle)\varphi(Q_{u_0}(u))
	\end{align*}
	Combining this equation with Eq.~\eqref{eq:inductionRep1Hom} and Eq.~\ref{eq:sameSigns}, we obtain the desired representation.
\end{proof}

The key benefit of this representation is that we can transfer properties of $\varphi$ to the function $f$.
\begin{corollary}
	\label{corollary:propertiesF}
	Let $E$ be a finite dimensional real vector space and $\varphi:\calP^n\rightarrow E$ a weakly continuous, translation invariant, $1$-homogeneous, simple valuation, and let $f:\mathcal{U}^n_{n-1}\rightarrow E$ be defined as in \autoref{theorem:1HomSimple}. Then the following holds:
	\begin{enumerate}
		\item If $\varphi$ is measurable, then $f$ is measurable.
		\item If $\varphi$ is bounded on bounded families of orthogonal simplices, then $f$ is bounded.
	\end{enumerate}
\end{corollary}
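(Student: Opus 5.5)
The plan is to read both properties off the explicit formula
\begin{align*}
	f(u)=\Big(\prod_{j=1}^{n-1}\sign(\langle u_0,u_j\rangle)\Big)\varphi(Q_{u_0}(u)).
\end{align*}
Via \autoref{lemma:isomorphismStiefelSOn}, $f$ is a function on $\SO(n)$. We split $\SO(n)$ into the open set
\begin{align*}
	V:=\{u\in\SO(n):\langle u_0,u_j\rangle\neq 0 ~\text{for}~1\le j\le n\}
\end{align*}
and its complement $Z$. The complement $Z$ is closed and is a finite union of zero sets of continuous functions.

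\textbf{Measurability.} First consider points of $Z$ with $\langle u_0,u_j\rangle=0$ for some $j\le n-1$. There the sign factor vanishes, so $f=0$; equivalently $Q_{u_0}(u)=0$ by definition. Next consider points of $Z$ with $\langle u_0,u_n\rangle=0$. There $Q_{u_0}(u)=0$ by \autoref{lemma:PropertiesQ}(2), so $f(u)=\pm\varphi(\{0\})=0$ because $\varphi$ is simple. Hence $f$ vanishes on $Z$.

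On $V$, the set decomposes into finitely many open pieces $V_\epsilon$, $\epsilon\in\{\pm1\}^{n}$, on which all the signs $\sign(\langle u_0,u_j\rangle)$ are constant. On each $V_\epsilon$ the sign factor is a constant, and the map $u\mapsto Q_{u_0}(u)$ is continuous into $\calP^n$ by \autoref{lemma:restrQCont}. The composition $\varphi\circ Q_{u_0}$ of a Borel map with a continuous map is Borel on $V_\epsilon$. It follows that
\begin{align*}
	f=\sum_\epsilon 1_{V_\epsilon}\,c_\epsilon\,\varphi\circ Q_{u_0}
\end{align*}
is Borel measurable on $\SO(n)$. To be careful, $Q_{u_0}$ on $V$ was defined after first replacing $u_j$ by $\sign(\langle u_0,u_j\rangle)u_j$, and the lemma asserts continuity of the resulting map there. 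This is the only point requiring attention, and it is handled by working piecewise on the $V_\epsilon$.

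\textbf{Boundedness.} By \autoref{lemma:PropertiesQ}, every $Q_{u_0}(u)$ is either $\{0\}$ or an orthogonal simplex whose vertices are
\begin{align*}
	\pm\bigl(0,\,u_n,\,P^2_{u,u_0}(u_n),\dots,P^n_{u,u_0}(u_n)\bigr),
\end{align*}
and projections of a unit vector all have norm at most $1$. So $\{Q_{u_0}(u):u\in\SO(n)\}$ is a family of orthogonal simplices contained in the unit ball, which is a bounded family. Under hypothesis (2), $\varphi$ is therefore bounded on this family. Since $|f(u)|\le|\varphi(Q_{u_0}(u))|$, up to the choice of norm on the finite-dimensional space $E$, $f$ is bounded.

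The main obstacle is the measurability step: one must make sure that the piecewise definition of $Q_{u_0}$, including the sign flips, really produces a continuous map on each open piece, and that the degenerate set $Z$ contributes nothing. Both points are settled by \autoref{lemma:restrQCont} and the simplicity of $\varphi$.
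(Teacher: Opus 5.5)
Your proposal is correct and follows the paper's proof. On the open set where all $\langle u_0,u_j\rangle\neq 0$ you write $f$ as a locally constant sign times $\varphi\circ Q_{u_0}$, with $Q_{u_0}$ continuous by \autoref{lemma:restrQCont}; you show $f=0$ on the closed complement; and you get boundedness because every $Q_{u_0}(u)$ is an orthogonal simplex (or $\{0\}$) inside the unit ball. Splitting that open set by sign pattern, and using simplicity to see that $f$ vanishes on the complement, just make explicit two points the paper passes over quickly.
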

\begin{proof}
	The second property follows directly from the definition of $f$, since $Q_{u_0}(u)$ is an orthogonal simplex with diameter bounded independent of $u\in \calU^{n-1}_n$. In order to see that $f$ is measurable if $\varphi$ is measurable, observe that the restriction of $f$ to the open set 
	\begin{align*}
		\{u\in\SO(n):\langle u_0,u_j\rangle \ne 0 ~\text{for}~1\le j\le n\}\subset \SO(n)\cong \calU^n_{n-1}
	\end{align*} 
	is the composition of $\varphi$ with the continuous function $Q_{u_0}$, compare \autoref{lemma:restrQCont}, and so in particular measurable, and that $f$ is zero on the complement.
\end{proof}

\subsection{Equivariant simple and measurable valuations of degree $1$}
For this section, assume that $E$ is a finite dimensional real vector space. We start with two simple observations.
\begin{lemma}
	\label{lemma:Equivariance1HomSimple}
	Let $f:\calU^{n-1}_n\rightarrow E$ be a function and define $\varphi:\calP^n\rightarrow E$ by 
	\begin{align*}
		\varphi(P):=\sum_{u\in \calU^{n-1}_n(P)}f(u)\vol_{1}(F(P,u)).
	\end{align*}
	Then for any $g\in\SO(n)$,
	\begin{align*}
		\varphi(gP)=\sum_{u\in \calU^{n-1}_n(P)}f(gu)\vol_{1}(F(P,u)).
	\end{align*}
\end{lemma}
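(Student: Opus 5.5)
The proof is a reindexing of the sum defining $\varphi(gP)$ by $u\mapsto gu$. It rests on two facts about how faces and tight frames behave under rotations. Fix $g\in\SO(n)$ and $P\in\calP^n$. By definition,
\begin{align*}
	\varphi(gP)=\sum_{w\in \calU^{n-1}_n(gP)}f(w)\vol_1(F(gP,w)),
\end{align*}
and we will substitute $w=gu$ and compare term by term with the claimed expression.

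\textbf{Equivariance of faces.} We show that $F(gP,gu)=gF(P,u)$ for every $u\in\calU^{n-1}_n$, by induction on the length of the partial frame. For a single unit vector $v$, we use $h_{gP}(gv)=\sup_{x\in P}\langle gx,gv\rangle=h_P(v)$, since $g$ is orthogonal. Hence
\begin{align*}
	F(gP,gv)=\{gx: x\in P,\ \langle gx,gv\rangle=h_P(v)\}=gF(P,v).
\end{align*}
Applying this repeatedly to the iterated faces gives $F(gP,(gu_1,\dots,gu_k))=gF(P,(u_1,\dots,u_k))$ for $1\le k\le n-1$.

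\textbf{Bijection of tight frames.} Since $g$ is a linear isomorphism, it preserves the dimension of polytopes. Together with the face equivariance, this shows that $u$ is $P$-tight if and only if $gu$ is $gP$-tight. So $u\mapsto gu$ is a bijection $\calU^{n-1}_n(P)\to\calU^{n-1}_n(gP)$, with inverse given by $g^{-1}$.

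\textbf{Conclusion.} Since $g$ is an isometry, $\vol_1(F(gP,gu))=\vol_1(gF(P,u))=\vol_1(F(P,u))$. Reindexing the sum by $w=gu$ then yields
\begin{align*}
	\varphi(gP)=\sum_{u\in \calU^{n-1}_n(P)}f(gu)\vol_1(F(P,u)),
\end{align*}
which is the claim. The argument is elementary throughout; the only point requiring care is the iterated face identity, which the induction above handles.
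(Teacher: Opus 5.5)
Your proof is correct and follows the paper's argument: the paper likewise uses $F(gP,u)=gF(P,g^Tu)$ and the fact that $u$ is $gP$-tight exactly when $g^Tu$ is $P$-tight, then reindexes the sum by $u\mapsto gu$. You additionally write out the induction for iterated faces and the isometry invariance of $\vol_1$, which the paper leaves as ``easy to see''.
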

\begin{proof}
	If $u\in \calU^{n-1}_n$ is $gP$-tight, then it is easy to see that $g^Tu$ is $P$-tight and $F(gP,u)=gF(P,g^Tu)$. Thus	
	\begin{align*}
		\varphi(gP)=\sum_{u\in \calU^{n-1}_n(gP)}f(u)\vol_{1}(F(gP,u))
		=\sum_{g^Tu\in \calU^{n-1}_n(P)}f(u)\vol_{1}(F(P,g^Tu)).
	\end{align*}
	Replacing $u$ by $gu$ in the sum, we obtain the desired result.
\end{proof}

The next result is essentially trivial, however, since it is central to the following regularity result, we include the argument. 
\begin{lemma}\label{lemma:averageContinuous}
	Let $(E,\pi_E)$ be a finite dimensional representation of $\SO(n)$ and $f\in L^1(\SO(n),E)$. Then the pointwise defined function
	\begin{align*}
		\tilde{f}(h):=\int_{\SO(n)} \pi_E(g)f(g^{-1}h)d\mu_{\SO(n)}(g)
	\end{align*}
	is continuous.
\end{lemma}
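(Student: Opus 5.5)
The approach is to substitute $g\mapsto hg^{-1}$ in the integral and then use continuity of translation in $L^1$. By unimodularity and left/right invariance of the Haar measure $\mu=\mu_{\SO(n)}$, the substitution $g\mapsto hg^{-1}$ rewrites
\begin{align*}
	\tilde{f}(h)=\int_{\SO(n)}\pi_E(hg^{-1})f(g)\,d\mu(g)=\pi_E(h)\int_{\SO(n)}\pi_E(g^{-1})f(g)\,d\mu(g).
\end{align*}
The integral on the right is a fixed vector $v\in E$. It is well defined because $g\mapsto\pi_E(g^{-1})$ is continuous, hence bounded in operator norm on the compact group, and $f\in L^1$. Consequently $\tilde f(h)=\pi_E(h)v$, which is continuous in $h$ because $\pi_E$ is a continuous representation on a finite dimensional space.

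The steps are carried out in the following order:
\begin{enumerate}
	\item Fix a norm on $E$ and note that $C:=\sup_{g}\|\pi_E(g)\|<\infty$ by compactness. This shows that the defining integral converges absolutely for every $h$, since $\int\|f(g^{-1}h)\|\,d\mu(g)=\|f\|_{L^1}$ by invariance of $\mu$ under $g\mapsto g^{-1}$ and right translation.
	\item Perform the change of variables $g\mapsto hg^{-1}$, which preserves $\mu$ because $\SO(n)$ is compact and hence unimodular.
	\item Pull the linear map $\pi_E(h)$ out of the integral; this is legitimate since $E$ is finite dimensional and the integral can be taken componentwise.
\end{enumerate}

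If one prefers to avoid the explicit formula, the fallback is the standard estimate
\begin{align*}
	\|\tilde f(h)-\tilde f(h')\|\le C\,\|L_{h^{-1}}f-L_{h'^{-1}}f\|_{L^1},
\end{align*}
combined with continuity of translations in $L^1(\SO(n),E)$, which follows by density of continuous functions.

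There is no real obstacle. The only point requiring care is verifying that the change of variables is measure preserving (unimodularity) and that the pointwise definition makes sense for every $h$, not merely almost everywhere. Both points are settled by step 1.
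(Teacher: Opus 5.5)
Your main argument is correct and is essentially the paper's proof: a Haar-invariant change of variables gives $\tilde f(h)=\pi_E(h)\tilde f(e)$, and continuity then follows from continuity of $\pi_E$. The paper substitutes $g\mapsto hg$, while you substitute $g\mapsto hg^{-1}$, which also uses inversion invariance; your step 1 additionally supplies the absolute convergence that the paper leaves implicit. One small slip in the fallback, which you do not need: $\tilde f(h)-\tilde f(h')$ is controlled by the right translates $\int\|f(gh)-f(gh')\|\,d\mu(g)$, not by $\|L_{h^{-1}}f-L_{h'^{-1}}f\|_{L^1}$ with the paper's convention $L_gf(x)=f(g^{-1}x)$; right translations are equally continuous in $L^1$, so the conclusion survives.
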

\begin{proof}
	Since the Haar measure is left invariant, we have
	\begin{align*}
		\tilde{f}(h)=&\int_{\SO(n)} \pi_E(hg)f(g^{-1})d\mu_{\SO(n)}(g)\\
		=&\pi_E(h)\left(\int_{\SO(n)} \pi_E(g)f(g^{-1})d\mu_{\SO(n)}(g)\right)=\pi_E(h)\tilde{f}(e).
	\end{align*}
	Since $\SO(n)$ operates continuously on $E$, the result follows.
\end{proof}

Given $u\in \calU^{n-1}_n$ and $e=(\epsilon_1,\dots,\epsilon_{n-1})\in \{\pm 1\}^{n-1}$, we set \begin{align*}
	eu:=(\epsilon_1 u_1,\dots,\epsilon_{n-1}u_{n-1})
\end{align*} and $\pi(e)=\epsilon_1\dots\epsilon_{n-1}$. We then call a function $f:\calU^{n-1}_n\rightarrow E$ into a real vector space odd if 
\begin{align*}
	f(eu)=\pi(e)f(u)
\end{align*}
for all $e\in \{\pm1\}^{n-1}$ and $u\in\calU^{n-1}_n$. Note that the action of $\{\pm 1\}^{n-1}$ on $\calU^{n-1}_n$ commutes with the natural operation of $\SO(n)$ on this space, i.e. for $g\in \SO(n)$, we have 
\begin{align*}
	g(eu)=e(gu)\quad \text{for}~u\in \calU^{n-1}_n, g\in\SO(n), e\in \{\pm1\}^{n-1}.
\end{align*}

Recall that the homeomorphism $\SO(n)\cong \calU^{n-1}_n$ from \autoref{lemma:isomorphismStiefelSOn} commutes with the action of $\SO(n)$, so this map preserves the corresponding Haar measures on $\SO(n)$ and $\calU^{n-1}_n$. For clarity, we will distinguish the measures and denote them by $\mu_{\SO(n)}$ and $\mu_{\calU^{n-1}_n}$, respectively.

\begin{theorem}\label{theorem:Equiv1homValuation}
	Let $(E,\pi_E)$ be a finite dimensional representation of $\SO(n)$, $\varphi:\calP^n\rightarrow E$ an $\SO(n)$-equivariant, translation invariant, and measurable valuation that is simple and homogeneous of degree $1$. Then there exists an odd function $f\in C(\SO(n),E)^{\SO(n)}$ such that
	\begin{align*}
		\varphi(P)=\sum_{u\in \calU^{n-1}_n(P)}f(u)\vol_1(F(P,u)).
	\end{align*}
\end{theorem}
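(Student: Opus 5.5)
First, \autoref{corollary:measurableImpliesWeaklyCont} shows that $\varphi$ is weakly continuous. We can therefore apply \autoref{theorem:1HomSimple} with a fixed $u_0\in S^{n-1}$. This gives a function $f_0:\calU^{n-1}_n\rightarrow E$ with
\begin{align*}
	\varphi(P)=\sum_{u\in\calU^{n-1}_n(P)}f_0(u)\vol_1(F(P,u)),\quad f_0(u)=\Big(\prod_{j=1}^{n-1}\sign(\langle u_0,u_j\rangle)\Big)\varphi(Q_{u_0}(u)).
\end{align*}
By \autoref{corollary:propertiesF}, $f_0$ is measurable. It is also bounded, because $Q_{u_0}(u)$ is either $\{0\}$ or a translate of a subset of a fixed bounded region. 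Since $\varphi$ is a valuation whose homogeneous parts are real homogeneous, it is locally bounded on the compact family of simplices $\{Q_{u_0}(u)\}$, and I would check this directly from the weak continuity established above. Hence $f_0\in L^1(\SO(n),E)$, identifying $\calU^{n-1}_n$ with $\SO(n)$ via \autoref{lemma:isomorphismStiefelSOn}. Next I would verify that $f_0$ is odd. The extension of $Q_{u_0}$ to $\calU^{n-1}_n$ only depends on $(\sign(\langle u_0,u_j\rangle)u_j)_j$, so $Q_{u_0}(eu)=Q_{u_0}(u)$, while the sign prefactor gets multiplied by $\pi(e)$.

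Next I average. I set
\begin{align*}
	f(h):=\int_{\SO(n)}\pi_E(g)f_0(g^{-1}h)\,d\mu_{\SO(n)}(g),
\end{align*}
which is continuous by \autoref{lemma:averageContinuous}. It satisfies $f(kh)=\pi_E(k)f(h)$ by left invariance of the Haar measure, so $f\in C(\SO(n),E)^{\SO(n)}$. It is odd because the $\{\pm1\}^{n-1}$-action commutes with the left $\SO(n)$-action, so oddness of $f_0$ passes through the integral.

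To prove the representation, fix $P$. Equivariance gives $\varphi(P)=\pi_E(g)\varphi(g^{-1}P)$ for all $g$. Applying \autoref{lemma:Equivariance1HomSimple} to $g^{-1}P$ yields
\begin{align*}
	\varphi(P)=\pi_E(g)\sum_{u\in\calU^{n-1}_n(P)}f_0(g^{-1}u)\vol_1(F(P,u)).
\end{align*}
The sum is finite and the left side does not depend on $g$. Integrating over $g$ with respect to the Haar probability measure and exchanging the finite sum with the integral gives exactly $\sum_{u}f(u)\vol_1(F(P,u))$.

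The main obstacle is the integrability of $f_0$. Measurability alone is not enough, and boundedness of $\varphi$ on the family $\{Q_{u_0}(u)\}$ needs justification. If weak continuity does not directly give it, I would use the fact that the family of orthogonal simplices $Q_{u_0}(u)$ with $\langle u_n,u_0\rangle\ne0$ is parametrized by the edge vectors. Evaluating the $1$-homogeneous, Minkowski-additive valuation via the Canonical Simplex Decomposition would then express $\varphi(Q)$ through quantities that are continuous in those parameters. Failing that, I would replace $f_0$ by the truncation-free argument of averaging only over sets where $|f_0|\le M$. The pointwise identity above holds for every $g$, so the identity survives averaging over any positive-measure set $A$ with normalized measure $\mu(A)^{-1}\int_A$. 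Choosing $A$ as a large set on which $f_0(g^{-1}u)$ is bounded for the finitely many relevant $u$ is awkward, though, since it depends on $P$. I therefore expect the boundedness route to be the one to push through.
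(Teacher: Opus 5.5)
\textbf{There is a genuine gap at the boundedness step.} Your overall structure matches the paper: oddness of $f_0$ from $Q_{u_0}(eu)=Q_{u_0}(u)$, averaging $g\mapsto \pi_E(g)f_0(g^{-1}h)$, continuity via \autoref{lemma:averageContinuous}, and integrating $\varphi(P)=\pi_E(g)\sum_{u\in\calU^{n-1}_n(P)}f_0(g^{-1}u)\vol_1(F(P,u))$ over $g$. But the step everything rests on, that $f_0$ is bounded and hence in $L^1$, is not established, and the justification you give is wrong.

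Weak continuity only gives continuity in the support numbers for a \emph{fixed} tuple of normals, whereas the normals of $Q_{u_0}(u)$ move with $u$. Real homogeneity and measurability do not give boundedness on this family either. For instance, in $\R^2$ let $c:S^1\rightarrow\R$ be odd, Borel and finite but unbounded near a direction $w$ with $0<\langle w,u_0\rangle<1$, and set $\varphi(P)=\sum_{v\in N(P)}c(v)\vol_1(F(P,v))$. This $\varphi$ is translation invariant, simple (by oddness of $c$), real $1$-homogeneous and weakly continuous. It is also measurable, as a pointwise limit of the valuations built from truncations of $c$. Yet the hypotenuse of $Q_{u_0}(u)$ is a unit segment with normal $\pm u_1$, while the legs are parallel to $u_0$ and to $u_0^\perp$. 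Hence $|f_0(u)|\ge |c(u_1)|-C$, which is unbounded.

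So any proof of boundedness must use the $\SO(n)$-equivariance, which your argument never invokes at this point. Your fallback through the Canonical Simplex Decomposition fails for the same reason: nothing there is continuous in the normal directions.

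\textbf{The missing idea is the one you set aside as awkward.} Equip $E$ with an $\SO(n)$-invariant inner product. Using measurability of $f_0$, choose $K_\epsilon\subset\calU^{n-1}_n$ with $\mu_{\calU^{n-1}_n}(\calU^{n-1}_n\setminus K_\epsilon)\le\epsilon$ and $|f_0|\le M_\epsilon$ on $K_\epsilon$. For fixed $P$, the set of $g\in\SO(n)$ with $g^{-1}u\in K_\epsilon$ for all $u\in\calU^{n-1}_n(P)$ has measure at least $1-\epsilon|\calU^{n-1}_n(P)|$. For small $\epsilon$ it therefore contains some $g$, and $\varphi(P)=\pi_E(g)\varphi(g^{-1}P)$ yields $|\varphi(P)|\le M_\epsilon|\calU^{n-1}_n(P)|\diam(P)$.

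You never need to average over this $P$-dependent set; you only need the resulting bound. The $P$-dependence disappears for simplices, since every $n$-simplex $\Delta$ has exactly $(n+1)!/2$ tight frames. Fixing $\epsilon_0<2/(n+1)!$ gives $|\varphi(\Delta)|\le M_{\epsilon_0}\frac{(n+1)!}{2}\diam(\Delta)$ for \emph{all} simplices. Since the $Q_{u_0}(u)$ have diameter at most $2$, $f_0(u)=\pm\varphi(Q_{u_0}(u))$ is bounded, hence $f_0\in L^1(\SO(n),E)$. With this inserted, the rest of your argument goes through as written.
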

\begin{proof}
	We may equip $E$ with an $\SO(n)$-invariant inner product, so $\SO(n)$ acts by isometries on $E$. Let $f:\calU^{n-1}_n\rightarrow E$ be the function from \autoref{theorem:1HomSimple} such that
	\begin{align*}
		\varphi(P)=\sum_{u\in \calU^{n-1}_n(P)}f(u)\vol_1(F(P,u)).
	\end{align*}
	Since $Q_{u_0}(eu)=Q_{u_0}(u)$ by construction, $f$ is an odd function. Moreover, $f$ is measurable by \autoref{corollary:propertiesF}, so we may for every $\epsilon>0$ choose a measurable set $K_\epsilon\subset \calU^{n-1}_{n}$ with $\mu_{\mathcal{U}^{n-1}_n}(\calU^{n-1}_{n}\setminus K_\epsilon)\le \epsilon$ and such that $f|_{K_\epsilon}$ is bounded by a constant $M_\epsilon>0$.\\
	For $P\in\calP^n$ consider the set
	\begin{align*}
		G_\epsilon(P):=\{g\in\SO(n):g^{-1}u\in K_\epsilon~\text{for every}~u\in \calU^{n-1}_n(P)\}.
	\end{align*}
	Then $G_\epsilon(P)$ is measurable and
	\begin{align*}
		\mu_{\SO(n)}(G_\epsilon(P))=& 1- \mu_{\SO(n)}(\{g\in\SO(n):g^{-1}u\notin K_\epsilon~\text{for some}~u\in \calU^{n-1}_n(P)\})\\
		\ge& 1-\sum_{u\in\mathcal{U}^{n-1}_n(P)}\mu_{\SO(n)}(\left\{g\in\SO(n): g^{-1} u \in\calU^{n-1}_n\setminus K_\epsilon \right\})\\
		=&1-|\mathcal{U}^{n-1}_n(P)|\mu_{\mathcal{U}^{n-1}_n}(\calU^{n-1}_n\setminus K_\epsilon)\ge 1-\epsilon |\mathcal{U}^{n-1}_n(P)|.
	\end{align*}
	In particular, the Haar measure of this set is positive for $\epsilon<\frac{1}{|\mathcal{U}^{n-1}_n(P)|}$, so $G_\epsilon(P)$ is nonempty. In this case, choose $g_{\epsilon,P}\in G_\epsilon(P)$. We may then apply \autoref{lemma:Equivariance1HomSimple} to obtain
	\begin{align*}
		\varphi(P)=\pi_E(g_{\epsilon,P})\varphi(g_{\epsilon,P}^{-1}P)=\sum_{u\in\calU^{n-1}_n(P)}\pi_E(g_{\epsilon,P})f(g_{\epsilon,P}^{-1}u)\vol_1(F(P,u)),
	\end{align*}
	and since $g^{-1}_{\epsilon,P}u\in K_\epsilon$ for every $u\in\calU^{n-1}_n(P)$ by assumption, we obtain the inequality
	\begin{align*}
		|\varphi(P)|\le  M_\epsilon\sum_{u\in\calU^{n-1}_n(P)}\vol_1(F(P,u))\le M_\epsilon |\calU^{n-1}_n(P)| \diam(P)
	\end{align*}
	for $\epsilon<\frac{1}{|\mathcal{U}^{n-1}_n(P)|}$. Now let $P=\Delta$ be a simplex, so that $|\calU^{n-1}_n(\Delta)|=\frac{(n+1)!}{2}$. If we fix $\epsilon_0<\frac{2}{(n+1)!}$, this implies
	\begin{align*}
		|\varphi(\Delta)|\le M_{\epsilon_0}\frac{(n+1)!}{2} \diam(\Delta)
	\end{align*}
	for \emph{every} simplex $\Delta$. In particular, Corollary \ref{corollary:propertiesF} shows that the function $f$ is bounded. Since $f$ is measurable by \autoref{corollary:propertiesF}, $f$ is in particular integrable, i.e. $f\in L^1(\SO(n),E)$, and we may average the function $\varphi(P)=\pi_E(g)\varphi(g^{-1}P)$ over $g\in \SO(n)$ to obtain (using \autoref{lemma:Equivariance1HomSimple} again)
	\begin{align*}
		\varphi(P)=\sum_{u\in\calU^{n-1}_n(P)}\int_{\SO(n)}\pi_E(g)f(g^{-1}u)d\mu_{\SO(n)}(g)\cdot\vol_1(F(P,u)).
	\end{align*}
	The function $u\mapsto \int_{\SO(n)}\pi_E(g)f(g^{-1}u)dg$ on $\calU^{n}_{n-1}\cong \SO(n)$ is then $\SO(n)$-invariant and odd. Moreover, it is continuous due to \autoref{lemma:averageContinuous}. Thus it belongs to $C(\SO(n),E)^{\SO(n)}$, which completes the proof.
\end{proof}

The Canonical Decomposition for $C(\SO(n),E)$ can be used to obtain some more precise information on the space of these valuations. More precisely, \autoref{corollary:canonicalDecompTensor} shows that
\begin{align*}
C(\SO(n),E)^{\SO(n)}\cong \bigoplus_{[\pi]\in\widehat{\SO(n)}}E_\pi^*\otimes (E_\pi\otimes E)^{\SO(n)},
\end{align*}
where only a finite number of summands are nontrivial. In particular, Schur's Lemma implies the following estimate of the dimension of this space.
	\begin{corollary}\label{lemma:TriviallitySimpleValuations}
	Let $E$ be an irreducible representation of $\SO(n)$. The space of translation invariant, measurable, simple, $1$-homogeneous valuations $\varphi:\calP^n\rightarrow E$ that are $\SO(n)$-equivariant is at most $\dim E$-dimensional.
\end{corollary}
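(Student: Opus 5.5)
The plan is to build an injective linear map from the space of such valuations into $C(\SO(n),E)^{\SO(n)}$, and then show that its image has dimension at most $\dim E$.

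\emph{Step 1 (injection).} Let $V$ be the space of translation invariant, measurable, simple, $1$-homogeneous, $\SO(n)$-equivariant valuations $\varphi:\calP^n\rightarrow E$. For each $\varphi\in V$ we use the averaged function from the proof of \autoref{theorem:Equiv1homValuation},
\begin{align*}
	\tilde f_\varphi(u)=\int_{\SO(n)}\pi_E(g)f_\varphi(g^{-1}u)\,d\mu_{\SO(n)}(g),
\end{align*}
where $f_\varphi$ is the function from \autoref{theorem:1HomSimple} for a fixed $u_0$. The assignment $\varphi\mapsto f_\varphi$ is visibly linear, and integration is linear, so $\varphi\mapsto\tilde f_\varphi$ is linear. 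By \autoref{theorem:Equiv1homValuation}, $\tilde f_\varphi$ lies in $C(\SO(n),E)^{\SO(n)}$ and represents $\varphi$ through
\begin{align*}
	\varphi(P)=\sum_{u\in \calU^{n-1}_n(P)}\tilde f_\varphi(u)\vol_1(F(P,u)).
\end{align*}
So if $\tilde f_\varphi=0$, then $\varphi=0$, and the map is injective. Hence $\dim V\le \dim C(\SO(n),E)^{\SO(n)}$.

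\emph{Step 2 (dimension bound).} The quickest route avoids the canonical decomposition entirely. The computation in \autoref{lemma:averageContinuous} shows that any $h\in C(\SO(n),E)^{\SO(n)}$ satisfies $h(g)=\pi_E(g)h(e)$. Therefore $h\mapsto h(e)$ is an injective linear map into $E$, and $\dim C(\SO(n),E)^{\SO(n)}\le \dim E$.

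Alternatively, one can read the bound off \autoref{corollary:canonicalDecompTensor}. There $(E_\pi\otimes E)^{\SO(n)}\cong\mathrm{Hom}_{\SO(n)}(E_\pi^*,E)$, and since $E$ is irreducible, Schur's Lemma leaves only $[\pi]=[E^*]$. That summand is $E_\pi^*\otimes\mathrm{Hom}_{\SO(n)}(E_\pi^*,E)$. Over $\C$ it has dimension $\dim E\cdot 1$. Over $\R$ the endomorphism algebra may be $\R$, $\C$ or $\mathbb{H}$, and the dimension count is then less clean. For this reason I would present the direct evaluation argument and mention Schur only as the structural interpretation.

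\emph{Main obstacle.} The one step to check carefully is that $\varphi\mapsto\tilde f_\varphi$ is genuinely linear and well defined. This holds because $u_0$ is fixed once for all $\varphi$ and $f_\varphi$ is defined pointwise by evaluating $\varphi$ on $Q_{u_0}(u)$. Everything else is immediate from \autoref{theorem:Equiv1homValuation}. Finally, oddness is not needed for the upper bound, so we can drop it.
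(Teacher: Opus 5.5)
Your proof is correct. Step 1 matches what the paper does implicitly, namely representing each valuation by an element of $C(\SO(n),E)^{\SO(n)}$ via \autoref{theorem:Equiv1homValuation}. Step 2, however, uses a different and more elementary argument than the paper's.

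The paper reads the bound off the Canonical Decomposition in \autoref{corollary:canonicalDecompTensor}: Schur's Lemma kills every summand $E_\pi^*\otimes(E_\pi\otimes E)^{\SO(n)}$ except $[\pi]=[E^*]$, which has dimension $\dim E$. You instead evaluate the invariance condition $f(hg)=\pi_E(h)f(g)$ at $g=e$ to get $f(h)=\pi_E(h)f(e)$. This gives $C(\SO(n),E)^{\SO(n)}\cong E$ outright, and it buys you several things:
\begin{itemize}
\item irreducibility of $E$ becomes superfluous;
\item the count is identical over $\R$ and $\C$, whereas the Schur count over $\R$ depends on the type of $E$, as you note;
\item it shows directly that the representing functions are the matrix coefficients $g\mapsto\pi_E(g)v$. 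This is exactly the structural information the paper later extracts from the Canonical Decomposition in \autoref{lemma:triviallitySimpleVectorValued}.
\end{itemize}
The paper's route places the statement inside the harmonic analysis on $C(\SO(n))$, but that machinery is not needed for this bound.

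One simplification for Step 1. You do not need a canonical injection $\varphi\mapsto\tilde f_\varphi$, which forces you into the proof of \autoref{theorem:Equiv1homValuation} to check linearity and integrability. Instead use the obviously linear map $f\mapsto\bigl[P\mapsto\sum_{u\in\calU^{n-1}_n(P)}f(u)\vol_1(F(P,u))\bigr]$ from $C(\SO(n),E)^{\SO(n)}$ to valuations. By the \emph{statement} of \autoref{theorem:Equiv1homValuation}, its image contains your space. That gives the same inequality and removes the ``main obstacle'' you flagged.
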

We will use a slightly more refined implication of the Canonical Decomposition in \autoref{section:SteinerPoint}.

\section{Rigid motion invariant measurable valuations on polytopes}
	\label{section:HadwigerPolytope}

	The key consequence of \autoref{theorem:Equiv1homValuation} is the following vanishing result for $1$-homogeneous valuations.
	\begin{proposition}
		\label{prop:simple1homHadwiger}
		Let $n\ge 2$. If $\varphi:\calP^n\rightarrow\R$ is a translation invariant, $\SO(n)$-invariant, measurable, and simple valuation that is homogeneous of degree $1$, then $\varphi=0$.
	\end{proposition}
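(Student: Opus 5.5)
By \autoref{theorem:Equiv1homValuation}, applied with $E=\R$ carrying the trivial representation, there is an odd function $f\in C(\SO(n),\R)^{\SO(n)}$ such that
\begin{align*}
	\varphi(P)=\sum_{u\in \calU^{n-1}_n(P)}f(u)\vol_1(F(P,u))\quad\text{for all}~P\in\calP^n.
\end{align*}
The plan is to show that $f$ is identically zero, which immediately gives $\varphi=0$.

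First, $f$ is constant. For the trivial representation, invariance means $f(hg)=f(g)$ for all $h,g\in\SO(n)$. Under the identification $\SO(n)\cong\calU^{n-1}_n$ of \autoref{lemma:isomorphismStiefelSOn}, the group acts transitively on $\calU^{n-1}_n$. Therefore $f\equiv c$ for some $c\in\R$.

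Second, oddness forces $c=0$. Take $e=(-1,1,\dots,1)\in\{\pm1\}^{n-1}$, which has $\pi(e)=-1$; this requires $n-1\ge1$, i.e. $n\ge2$. Then
\begin{align*}
	c=f(eu)=\pi(e)f(u)=-c
\end{align*}
for any $u$, so $c=0$ and $f\equiv0$. Consequently $\varphi(P)=0$ for every $P\in\calP^n$.

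The only step needing care is to confirm that \autoref{theorem:Equiv1homValuation} applies in this setting, namely that $\R$ with the trivial action is a finite dimensional representation and that $\SO(n)$-invariance of $\varphi$ is the same as equivariance for this representation. Both are immediate, so I see no real obstacle: all the analytic work (measurability, the averaging argument, continuity) has already been done in \autoref{theorem:Equiv1homValuation}, and what remains is the group-theoretic observation that an invariant function on a homogeneous space is constant, combined with the oddness of $f$.
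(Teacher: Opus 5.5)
Your proposal is correct and follows the paper's proof: apply \autoref{theorem:Equiv1homValuation} with the trivial representation, use transitivity of the $\SO(n)$-action to see that the invariant function $f$ is constant, and use oddness (which needs $n\ge 2$) to force that constant to be zero. Your explicit choice $e=(-1,1,\dots,1)$ simply spells out the paper's remark that every constant odd function on $\SO(n)$ vanishes.
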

	\begin{proof}
		By \autoref{theorem:Equiv1homValuation}, there exists an $\SO(n)$-invariant and odd function $f\in C(\SO(n))$ such that
		\begin{align*}
			\varphi(P)=\sum_{u\in \calU^{n-1}_n(P)}f(u)\vol_1(F(P,u)).
		\end{align*}
		for all $P\in\calP^n$. Since $f$ is $\SO(n)$-invariant, $f$ is constant. But for $n\ge 2$, every constant odd function on $\SO(n)$ vanishes identically. Thus $\varphi=0$.
	\end{proof}
	\autoref{prop:simple1homHadwiger} is the required replacement for the characterization of the mean width in Hadwiger's original proof of \autoref{theorem:HadwigerConvexBodies}. For completeness, we present a variant of the remaining arguments that explicitly exploits the polynomial behavior of translation invariant valuations in order to streamline the proof. We start by extending \autoref{prop:simple1homHadwiger} to arbitrary degrees of homogeneity $0\le k\le n-1$. This relies on the following result.
	
	\begin{lemma}[\cite{Chensimplifiedelementaryproof2004}*{Lemma 3.3}]\label{lemma:GeneralizedDecompOrthSimplices}
		Let $P\in\calP^n$ be given. There exist orthogonal simplices $S_1,\dots, S_m$ and $\epsilon_j\in\{\pm 1\}$ such that 
		\begin{align*}
			1_P-\sum_{j=1}^m\epsilon_j1_{S_j}
		\end{align*}
		is an integer combination of indicators of polytopes of dimension at most $n-1$. 
	\end{lemma}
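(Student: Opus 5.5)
We argue by induction on the dimension $n$, working throughout in the group $Z(\R^n)$ of integer combinations of indicator functions of polytopes modulo the subgroup $Z_{n-1}(\R^n)$ generated by indicators of polytopes of dimension at most $n-1$. The claim is that the class of $1_P$ in this quotient equals the class of $\sum_j\epsilon_j1_{S_j}$ for suitable orthogonal simplices $S_j$ and signs $\epsilon_j\in\{\pm1\}$. In this language the statement is equivalent to saying that the classes of orthogonal simplices generate the quotient group. Since the class is additive under decompositions $P=P_1\sqcup\dots\sqcup P_k$ (the overlaps are lower dimensional), it suffices to treat the case where $P$ is an $n$-dimensional simplex $\Delta=[p_0,\dots,p_n]$.

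\textbf{Step 1.} Reduce to simplices $\Delta$ with an orthogonal ``height''. Let $q$ be the orthogonal projection of $p_n$ onto the affine hull $H$ of the facet $F=[p_0,\dots,p_{n-1}]$, and write $C_G=\conv(G\cup\{p_n\})$ for a polytope $G\subset H$. The map $G\mapsto C_G$ is compatible with dissections: if $G=G_1\sqcup G_2$ inside $H$, then $C_G=C_{G_1}\sqcup C_{G_2}$. It also respects signed combinations of indicators in $H$ modulo lower-dimensional ones, as long as $q$ lies in the interior of $H$ or we use signed decompositions. Concretely, we have the signed cone decomposition
\[1_F=\sum_{i}\pm 1_{\conv(F_i\cup\{q\})}\pmod{Z_{n-2}(H)},\]
where $F_i$ ranges over the facets of $F$ and the signs depend on which side of $F_i$ the point $q$ lies. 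Applying $G\mapsto C_G$ to this identity gives
\[1_\Delta\equiv\sum_i\pm 1_{\conv(F_i\cup\{q,p_n\})}\pmod{Z_{n-1}(\R^n)}.\]
In each simplex on the right, the edge $[q,p_n]$ is orthogonal to the facet $\conv(F_i\cup\{q\})$.

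\textbf{Step 2.} Apply induction inside $H$. By the induction hypothesis in $H\cong\R^{n-1}$, each $(n-1)$-simplex $\conv(F_i\cup\{q\})$ is congruent, modulo $Z_{n-2}(H)$, to a signed sum of orthogonal $(n-1)$-simplices in $H$. The coning operation $G\mapsto\conv(G\cup\{p_n\})$ with the apex $p_n$ sitting above $q$ is not directly the right operation. Instead, I would use the variant in which one cones every orthogonal simplex $\langle x_0;x_1,\dots,x_{n-1}\rangle\subset H$ to the point $x_0+\sum_i x_i+(p_n-q)$. This is the point lying above the last vertex of the path, and the resulting simplex $\langle x_0;x_1,\dots,x_{n-1},p_n-q\rangle$ is an orthogonal $n$-simplex.

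\textbf{Step 3.} Change the apex. For this variant one needs that the class of $\conv(G\cup\{a+h\})$ modulo $Z_{n-1}(\R^n)$ does not depend on the foot point $a\in H$ in a way that breaks the argument. Sliding the apex parallel to $H$ changes the cone by a signed combination of cones over the lower-dimensional faces of $G$. More precisely, comparing $\conv(G\cup\{a+h\})$ with $\conv(G\cup\{b+h\})$ produces a difference equal to a signed sum of the $n$-simplices $\conv(G'\cup\{a+h,b+h\})$, with $G'$ ranging over the facets of $G$. Each of these is again a simplex of dimension $n$ with an edge $[a+h,b+h]$ parallel to $H$, so they are not lower-dimensional.

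\textbf{Main obstacle.} Handling these apex-shift terms is where I expect the difficulty, and I would treat them by a secondary induction on the face dimension. The plan is to first use the canonical decomposition of \autoref{theorem:CanonicalSimplexDecomp} combined with \autoref{remark:simplexDecompOrthogonal}, in the spirit of Chen's original argument. Alternatively, one can choose the decomposition in Step 1 so that the apex always lies directly above the last vertex of the orthogonal path; that is achieved by redoing the cone decomposition recursively with $q$ replaced by the successive orthogonal projections of $p_n$ onto the nested faces. This recursive foot-point construction produces orthogonal simplices directly, mirroring the construction of $Q_{u_0}(u)$ in \autoref{lemma:PropertiesQ}, and I would pursue it first.
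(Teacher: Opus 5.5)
\textbf{There is a genuine gap: as written, your argument does not close.} (The paper gives no proof of this lemma; it simply cites Chen.)

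Step 1 is fine. Coning from $p_n$ is the pullback along the central projection from $p_n$ onto $H$, so it is linear on $Z(H)$ and sends lower-dimensional classes to lower-dimensional ones.

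The problem is in Step 2. There you use the induction hypothesis only in the form of the lemma itself: $\conv(F_i\cup\{q\})$ is a signed sum of orthogonal $(n-1)$-simplices in $H$, with no information about where $q$ sits relative to them. Coning such a simplex from $p_n=q+h$ gives, in general, an orthogonal $n$-simplex only if $q$ is an endpoint of its orthogonal path. That is why you are pushed into Step 3.

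In Step 3 the apex-shift terms $\conv(G'\cup\{a+h,b+h\})$ are again non-orthogonal full-dimensional $n$-simplices. Reducing them needs the very statement you are proving in dimension $n$, so the argument becomes circular. Neither remedy you propose fixes this:
\begin{itemize}
\item the ``secondary induction on the face dimension'' is never specified;
\item the Canonical Simplex Decomposition turns a simplex into Minkowski sums of lower-dimensional simplices, not into orthogonal simplices, so it does not help here.
\end{itemize}

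\textbf{The missing idea} is to strengthen the inductive statement so that it records the foot point. Claim: let $\Delta$ be a $k$-simplex in a $k$-dimensional affine space $L$ and let $x\in L$ be \emph{any} point. Then, modulo lower-dimensional indicators, $1_\Delta\equiv\sum\pm 1_{[c_0,\dots,c_{k-1},x]}$. The sum runs over full flags $F_0\subset\dots\subset F_{k-1}$ of faces of $\Delta$, and $c_j$ is the orthogonal projection of $x$ onto $\mathrm{aff}\,F_j$.

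For the inductive step:
\begin{enumerate}
\item Write $1_\Delta\equiv\sum_F\pm1_{\conv(F\cup\{x\})}$ over the facets $F$. The sign records on which side of $\mathrm{aff}\,F$ the point $x$ lies; facets with $x\in\mathrm{aff}\,F$ contribute nothing.
\item Apply the $(k-1)$-dimensional statement to $F$ inside $\mathrm{aff}\,F$, using the foot point $q_F$ of $x$.
\item Cone back from $x$.
\end{enumerate}
Since $x-q_F$ is orthogonal to $\mathrm{aff}\,F$, projecting $q_F$ onto $\mathrm{aff}\,F_j$ is the same as projecting $x$. Set $c_k:=x$. Each edge $c_j-c_{j-1}$ lies in the direction space of $\mathrm{aff}\,F_j$ and is orthogonal to that of $\mathrm{aff}\,F_{j-1}$. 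Hence the edges are mutually orthogonal, and $[c_0,\dots,c_{k-1},x]$ is an orthogonal simplex (or is lower-dimensional and can be discarded).

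This is exactly the ``recursive foot-point construction'' you mention in your last sentence. Once the induction is phrased this way, no apex ever has to be moved, and Step 3 disappears. Taking $k=n$ and a triangulation of $P$ then gives the lemma.
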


	\begin{corollary}
		\label{corollary:SimpleVanishOrthogonalSimplices}
		If $\varphi:\calP^n\rightarrow\R$  is a simple valuation that vanishes on orthogonal simplices, then $\varphi=0$.
	\end{corollary}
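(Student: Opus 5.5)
The plan is to combine Groemer's extension (\autoref{corollary:Groemer}) with \autoref{lemma:GeneralizedDecompOrthSimplices}. Let $\tilde{\varphi}:Z(\R^n)\rightarrow\R$ denote the unique $\mathbb{Z}$-linear extension of $\varphi$, so that $\tilde{\varphi}(1_Q)=\varphi(Q)$ for all $Q\in\calP^n$.

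First, we check that $\tilde{\varphi}$ vanishes on every integer combination of indicators of polytopes of dimension at most $n-1$. Such a polytope $Q$ lies in a proper affine subspace. Since $\varphi$ is simple, $\varphi(Q)=0$, and $\mathbb{Z}$-linearity gives $\tilde{\varphi}(\sum_i a_i1_{Q_i})=\sum_i a_i\varphi(Q_i)=0$ whenever all $Q_i$ are lower dimensional.

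Next, fix $P\in\calP^n$. \autoref{lemma:GeneralizedDecompOrthSimplices} provides orthogonal simplices $S_1,\dots,S_m$ and signs $\epsilon_j\in\{\pm1\}$ such that $\alpha:=1_P-\sum_{j=1}^m\epsilon_j1_{S_j}$ is an integer combination of indicators of lower dimensional polytopes. Applying $\tilde{\varphi}$ and using linearity together with the first step, we get
\begin{align*}
	\varphi(P)=\tilde{\varphi}(1_P)=\sum_{j=1}^m\epsilon_j\varphi(S_j)+\tilde{\varphi}(\alpha)=\sum_{j=1}^m\epsilon_j\varphi(S_j).
\end{align*}
By assumption $\varphi(S_j)=0$ for every $j$, so $\varphi(P)=0$. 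Since $P$ was arbitrary, $\varphi=0$.

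The argument contains no real obstacle; it only requires that the extension $\tilde{\varphi}$ be well defined on $Z(\R^n)$. This is exactly Groemer's theorem, valid for arbitrary valuations on $\calP^n$ via the Inclusion-Exclusion Principle (\autoref{theorem:InclusionExclusionPrinciple}). The one point to be careful about is that the lemma produces a genuine identity of functions in $Z(\R^n)$, so the linear extension can be applied to it directly, with no additional regularity of $\varphi$ needed. Degenerate orthogonal simplices, if the lemma allows any, cause no trouble either, since $\varphi$ vanishes on them by simplicity anyway.
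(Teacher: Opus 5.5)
Your proof is correct and is essentially the paper's argument: apply Groemer's extension (\autoref{corollary:Groemer}) to the identity from \autoref{lemma:GeneralizedDecompOrthSimplices}, use simplicity to kill the lower dimensional remainder, and conclude $\varphi(P)=\sum_j\epsilon_j\varphi(S_j)=0$. You also correctly flag the point the paper itself highlights, namely that the argument relies on the full Inclusion-Exclusion Principle, which is what makes the extension to $Z(\R^n)$ well defined.
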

	\begin{proof}
		If $P\in\calP^n$ and $S_1,\dots,S_m$ are the orthogonal simplices from \autoref{lemma:GeneralizedDecompOrthSimplices} with signs $\epsilon_j\in\{\pm1\}$, $1\le j\le m$, then the fact that $\varphi$ is simple implies that
		\begin{align*}
			\varphi(P)=\sum_{j=1}^m\epsilon_j \varphi(S_j)=0.
		\end{align*}
		Note that this step uses \autoref{corollary:Groemer}, i.e. it relies on the fact that $\varphi$ satisfies the full Inclusion-Exclusion Principle from \autoref{theorem:InclusionExclusionPrinciple}.
		
	\end{proof}

	\begin{proposition}
		\label{proposition:simpleValuationsVanish}
		Let $0\le k\le n-1$ and $\varphi:\calP^n\rightarrow\R$ be a measurable, translation and $\SO(n)$-invariant, as well as simple valuation that is homogeneous of degree $k$. Then $\varphi=0$.
	\end{proposition}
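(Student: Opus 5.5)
The plan is to use \autoref{corollary:SimpleVanishOrthogonalSimplices}: it suffices to show that $\varphi$ vanishes on every orthogonal simplex $S=\langle x_0;x_1,\dots,x_n\rangle$. The case $k=0$ is immediate, since $\varphi_0$ is constant and simple, hence zero. For $k=1$ the claim is exactly \autoref{prop:simple1homHadwiger}. For $2\le k\le n-1$ I would argue by induction on $n$, with the claim for all $k\le n-1$ in lower dimensions, and use the Canonical Simplex Decomposition together with polynomiality in $t$.

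By \autoref{proposition:homDecompMeasuBounded} and \autoref{corollary:measurableImpliesWeaklyCont}, $\varphi$ is real $k$-homogeneous and weakly continuous. Since $\varphi$ is simple, \autoref{theorem:CanonicalSimplexDecomp} gives
\begin{align*}
\varphi(S)=\sum_{j=0}^n\varphi\left((1-t)\underline{S}_j+t\overline{S}_{n-j}\right),\quad 0<t<1.
\end{align*}
The terms $j=0$ and $j=n$ are translates of $tS$ and $(1-t)S$, which contribute $(t^k+(1-t)^k)\varphi(S)$. For $1\le j\le n-1$, \autoref{remark:simplexDecompOrthogonal} shows that $\underline{S}_j\subset E_j$ and $\overline{S}_{n-j}\subset F_j$ lie in orthogonal complementary subspaces (up to translation), with $\dim E_j=j$ and $\dim F_j=n-j$. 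Expanding $\varphi((1-t)A+tB)$ via the polarization gives $\sum_i\binom{k}{i}(1-t)^it^{k-i}\bar\varphi(A[i],B[k-i])$. By \autoref{remark:propertiesPolarization}, $\tilde\varphi_i(A,B)=\bar\varphi(A[i],B[k-i])$ is, for fixed $B$, a simple, translation invariant, $\SO(E_j)$-invariant valuation on $\calP(E_j)$ that is homogeneous of degree $i$, and the analogous statement holds in $B$. Measurability is inherited by \autoref{corollary:PropertiesHomComponents}, after restricting to a slice.

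A simple valuation on a $j$-dimensional space that is homogeneous of degree $i$ vanishes unless $i=j$: for $i<j$ this is the induction hypothesis, and for $i>j$ it holds because a translation invariant valuation on $\calP(E_j)$ has degrees at most $j$. Hence only the term with $i=j$ and $k-i=n-j$ survives, which forces $k=n$. Since $k\le n-1$, all middle terms vanish. Therefore
\begin{align*}
\varphi(S)=(t^k+(1-t)^k)\varphi(S)\quad\text{for all }0<t<1.
\end{align*}
Because $k\ge 2$, the factor $t^k+(1-t)^k$ differs from $1$ for $t\in(0,1)$, so $\varphi(S)=0$. Notably, this step needs only $k\ge 2$, so $k=1$ is the only place where the $\SO(n)$ argument of \autoref{prop:simple1homHadwiger} enters. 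When $n=2$ and $k\le1$, the earlier cases already cover everything.

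The main obstacle I expect is justifying that the restricted mixed functionals $\tilde\varphi_i(\cdot,B)$ on $\calP(E_j)$ satisfy the hypotheses of the induction. The issue is measurability on the subspace $\calP(E_j)$ with $B$ fixed. This should follow from the explicit formula in \autoref{theorem:mixValuation} as a finite combination of $\varphi$ composed with continuous maps $A\mapsto \sum i_lA+\dots$. I would also need to be careful that degree $i>j$ forces vanishing, which follows from \autoref{theorem:PolynomialityMcMullen} applied in $E_j$.
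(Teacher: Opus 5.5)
Your proposal is correct and follows essentially the same route as the paper. You handle $k=0,1$ directly, then induct on $n$: the polarization $\bar\varphi(A[i],B[k-i])$ and the induction hypothesis kill the middle terms of the Canonical Simplex Decomposition of an orthogonal simplex, which leaves $\varphi(S)=(t^k+(1-t)^k)\varphi(S)$, and \autoref{corollary:SimpleVanishOrthogonalSimplices} finishes the argument. The paper phrases the middle step as a separate claim that $\varphi$ vanishes on all proper orthogonal Minkowski sums. It also leaves implicit two points that you correctly make explicit: the case where the degree exceeds $\dim E_j$, and measurability in one argument with the other fixed.
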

	\begin{proof}
		The case $k=0$ follows from the fact that a $0$-homogeneous valuation is a multiple of the Euler characteristic, i.e. constant on $\calP^n$, while the case $k=1$ is covered by \autoref{prop:simple1homHadwiger}. \\
		In the remaining cases, we argue by induction on $n\ge2$, the case $n=2$ being trivial. Assume that the claim holds for all dimensions less or equal to $n-1$. Let $\varphi:\calP^n$ be a $k$-homogeneous, measurable, translation and $\SO(n)$-invariant, as well as simple valuation with $2\le k\le n-1$.
		We claim that $\varphi$ vanishes on all direct orthogonal Minkowski sums. Let $\R^n=E\oplus F$ be an orthogonal direct sum decomposition with $1 \le \dim E\le n-1$. From the definition of the polarization, we have for $P_E\in\calP(E)$ and $P_F\in\calP(F)$,
		\begin{align*}\varphi(P_E+P_F)=\sum_{j=0}^k \binom{k}{j}\bar{\varphi}(P_E[j],P_F[k-j]).
		\end{align*}
		The maps $(P_E,P_F)\mapsto \bar{\varphi}(P_E[j],P_F[k-j])$ are measurable and rigid motion invariant valuations in each argument, compare Remark \ref{remark:propertiesPolarization}. Moreover, since $\varphi$ is simple, so is $(P_E,P_F)\mapsto \bar{\varphi}(P_E[j],P_F[k-j])$ in each argument. Note that these maps are of degree $j$ in the first argument and of degree $k-j$ in the second. The induction assumption thus implies that these maps vanish identically unless $j=\dim E$, $k-j=\dim F$, i.e. unless $k=\dim E+\dim F=n$. Since $k\le n-1$ by assumption, this case does not occur, so all of these maps vanish identically. We thus obtain $\varphi(P_E+P_F)=0$ for all $P_E\in\calP(E)$, $P_F\in\calP(F)$. Since this holds for all orthogonal decompositions $\R^n=E\oplus F$, we see that $\varphi$ vanishes on proper orthogonal Minkowski sums.\\
		
		Now let $\Delta$ be an orthogonal simplex. We use the Canonical Simplex Decomposition in \autoref{theorem:CanonicalSimplexDecomp}, to write for $t\in(0,1)$
		\begin{align*}
			\Delta=\bigsqcup_{j=0}^n\left((1-t)\underline{S}_j+t\overline{S}_{n-j}\right).
		\end{align*}
		Since $\varphi$ is simple and homogeneous of degree $2\le k\le n-1$, this implies
		\begin{align*}
			\varphi(\Delta)=\left[(1-t)^k+t^k\right]\varphi(\Delta)+\sum_{j=1}^{n-1}\varphi((1-t)\underline{S}_j+t\overline{S}_{n-j}).
		\end{align*}
		However, the second term vanishes since, up to a translation, $(1-t)\underline{S}_j+t\overline{S}_{n-j}$ is a proper orthogonal Minkowski sum for $1\le j\le n-1$, compare \autoref{remark:simplexDecompOrthogonal}. We thus obtain
		\begin{align*}
			\varphi(\Delta)=\left[(1-t)^k+t^k\right]\varphi(\Delta)
		\end{align*}
		for every $t\in(0,1)$, which is only possible if $\varphi(\Delta)=0$ since $k\ne 1$. Thus $\varphi$ vanishes on all orthogonal simplices. Since $\varphi$ is simple, $\varphi$ thus has to vanish identically by Corollary \ref{corollary:SimpleVanishOrthogonalSimplices}, which completes the induction.
	\end{proof}
	
	It remains to combine the previous results with Hadwiger's characterization of $n$-homogeneous valuations in \autoref{theorem:HadwigerTopDegree}.
	\begin{proof}[Proof of \autoref{maintheorem:VolumeCharacterization}]
		If $\varphi=\sum_{k=0}^n\varphi_k$ is the decomposition of $\varphi$ into its homogeneous components, then $\varphi_k$ is a measurable, translation and $\SO(n)$-invariant, as well as simple valuation of degree $k$, and thus vanishes for $0\le k\le n-1$ by \autoref{proposition:simpleValuationsVanish}. Thus $\varphi=\varphi_n$ is $n$-homogeneous and therefore a multiple of the volume by \autoref{theorem:HadwigerTopDegree}. Note that this step uses that measurable translation invariant valuations are weakly continuous by \autoref{corollary:measurableImpliesWeaklyCont}.
	\end{proof}
	
	\begin{proof}[Proof of \autoref{maintheorem:HadwigerPolytopes}]
		We establish the result by induction on the dimension $n$, the case $n=1$ being trivial due to \autoref{theorem:HadwigerTopDegree}. Assume that the claim holds up to dimension $n-1$. Let $\varphi:\calP^n\rightarrow\R$ be a measurable, translation and $\SO(n)$-invariant valuation. We set $H:=e_n^\perp$ and consider the restriction of $\varphi$ to $\calP(H)$. By construction, this defines a measurable and translation invariant valuation on $\calP(H)$ that is invariant under the subgroup of $\SO(n)$ fixing $e_n$, which is isomorphic to $\SO(n-1)$. The induction assumption implies that there exist constants $c_0,\dots,c_{n-1}\in\R$ such that
		\begin{align*}
			\varphi(P_H)=\sum_{j=0}^{n-1}c_j V_j(P_H)
		\end{align*}
		for every $P_H\in\calP(H)$. The valuation $\tilde{\varphi}:=\varphi-\sum_{j=0}^{n-1} c_j V_j$ is therefore $\SO(n)$-invariant and vanishes on polytopes in $H$ (as well as measurable and translation invariant). Thus $\tilde{\varphi}$ is a simple valuation. By \autoref{maintheorem:VolumeCharacterization} there exists $c_n\in\R$ such that $\tilde{\varphi}=c V_n$, and we obtain
		\begin{align*}
			\varphi=\tilde{\varphi}+\sum_{j=0}^{n-1} c_jV_j=\sum_{j=0}^nc_j V_j.
		\end{align*}
	\end{proof}

	\section{Characterization of the Steiner point map}
	\label{section:SteinerPoint}
	We use a similar approach to obtain the characterization of the Steiner point map in \autoref{maintheorem:SteinerPoint}. The proof relies again on the characterization of simple $1$-homogeneous valuations in \autoref{theorem:Equiv1homValuation} for the standard representation of $\SO(n)$ on $\C^n$. We start by considering simple and translation invariant valuations. For simplicity, we identify $\mathcal{U}^{n-1}_n\cong \SO(n)$ using \autoref{lemma:isomorphismStiefelSOn}.
	\begin{lemma}\label{lemma:triviallitySimpleVectorValued}
		Let $n\ge 3$ and $\varphi:\calP^n\rightarrow\C^n$ be a measurable, $\SO(n)$-equivariant, translation invariant, and $1$-homogeneous valuation. If $\varphi$ is simple, then $\varphi=0$. 
	\end{lemma}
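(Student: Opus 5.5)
By \autoref{theorem:Equiv1homValuation}, applied to the standard representation $E=\C^n$, there is an odd function $f\in C(\SO(n),\C^n)^{\SO(n)}$ such that
\begin{align*}
	\varphi(P)=\sum_{u\in\calU^{n-1}_n(P)}f(u)\vol_1(F(P,u))\quad\text{for all}~P\in\calP^n.
\end{align*}
The plan is first to determine $f$ completely from invariance and oddness. The surviving term cannot be removed this way, so the second step is to show by a geometric cancellation that it contributes nothing to $\varphi(P)$.

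\emph{Step 1: form of $f$.} Invariance means $f(hg)=hf(g)$ for $h,g\in\SO(n)$, so $f(g)=gv$ with $v:=f(\mathrm{id})\in\C^n$. In terms of frames this reads $f(u)=\sum_{i=1}^n v_iu_i$, where $u_n$ completes $u$ to a positively oriented basis. Under the identification of \autoref{lemma:isomorphismStiefelSOn}, a sign vector $e\in\{\pm1\}^{n-1}$ acts as right multiplication by $D_e=\mathrm{diag}(\epsilon_1,\dots,\epsilon_{n-1},\pi(e))$, because $u_n$ changes sign by $\pi(e)$ to preserve orientation. Oddness then becomes $D_ev=\pi(e)v$ for all $e$.

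Test this with single sign flips. Flipping only $\epsilon_1$ gives $D_e=\mathrm{diag}(-1,1,\dots,1,-1)$ and $\pi(e)=-1$, which forces $v_2=\dots=v_{n-1}=0$. Flipping only $\epsilon_2$, which exists because $n\ge3$, forces $v_1=0$. The last coordinate always satisfies $\pi(e)v_n=\pi(e)v_n$, so oddness gives no information about it. Hence $f(u)=c\,u_n$ for some $c\in\C$.

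\emph{Step 2: cancellation of the $u_n$-term.} This is the main step. We must show that
\begin{align*}
	\sum_{u\in\calU^{n-1}_n(P)}u_n\vol_1(F(P,u))=0.
\end{align*}
Group the $P$-tight frames according to the prefix $(u_1,\dots,u_{n-2})$. This prefix is nonempty because $n\ge3$, and tightness makes $G=F(P,(u_1,\dots,u_{n-2}))$ a $2$-dimensional face. The remaining vector $u_{n-1}$ ranges over the outer unit normals of the edges of the polygon $G$ inside its affine plane $L$, and $F(P,u)$ is the corresponding edge.

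The prefix fixes an orientation of $L$. Hence $u_n=R\,u_{n-1}$, where $R$ is the rotation by $\pi/2$ in $L$ determined by $u_1,\dots,u_{n-2}$, and $R$ is the same for all edges of $G$. The inner sum is therefore
\begin{align*}
	R\Big(\sum_{\text{edges } F\text{ of } G}\vol_1(F)\,\nu_G(F)\Big),
\end{align*}
where $\nu_G(F)$ denotes the outer unit normal of $F$ within $L$. This vanishes by Minkowski's relation for polygons: the sum of the edge normals weighted by edge length is zero, equivalently the boundary edges of $G$ traversed in order form a closed loop. Summing over all prefixes gives $\varphi=0$.

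The step I expect to need the most care is the bookkeeping in Step 2. One has to check that, for a fixed prefix, every edge of $G$ occurs exactly once as $F(P,u)$. One also has to check that the orientation convention fixing $u_n$ really gives a single rotation $R$ for the whole face $G$, rather than orientations that depend on the edge.
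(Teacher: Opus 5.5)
Your proof is correct. At the decisive step it does not follow the paper's argument; it repairs it. The paper also starts from \autoref{theorem:Equiv1homValuation} and uses the Canonical Decomposition to write $f$ as a combination of the $x_{ij}\otimes v$. It then concludes $f=0$ from the claim that for $n\ge 3$ no nontrivial such combination is odd.

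Your Step~1 shows that this claim is false. Since $eu$ corresponds to $uD_e$ with $D_e=\mathrm{diag}(\epsilon_1,\dots,\epsilon_{n-1},\pi(e))$, the last-column entries $x_{in}$ transform by $\pi(e)$. Hence $u\mapsto u_n=\sum_i x_{in}(u)e_i$ is a nonzero odd element of $C(\SO(n),\C^n)^{\SO(n)}$, and by your computation it spans all odd invariant functions. So the paper's justification of $f=0$ does not go through. Your Step~2 supplies what is missing: $P\mapsto\sum_{u\in\calU^{n-1}_n(P)}u_n\vol_1(F(P,u))$ vanishes identically. In particular, the function in \autoref{theorem:Equiv1homValuation} is not uniquely determined by $\varphi$. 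A side benefit is that Step~1 needs no Canonical Decomposition, since $f(g)=g f(\mathrm{id})$ follows directly from invariance.

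The two bookkeeping points you flag both check out.

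First, $G=F(P,(u_1,\dots,u_{n-2}))$ lies in an affine plane $\bigcap_{j\le n-2}\{\langle x,u_j\rangle=c_j\}$, which is a translate of $L=\{u_1,\dots,u_{n-2}\}^\perp$. Since $\dim G=2$, its affine hull is this plane. A unit vector $u_{n-1}\in L$ makes $F(G,u_{n-1})$ one-dimensional exactly when it is the outer normal in $L$ of an edge of $G$. Each edge has exactly one such normal, so each edge occurs exactly once.

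Second, fix a rotation $J$ by $\pi/2$ in $L$. For unit vectors $w,w'\in L$, the pairs $(w,Jw)$ and $(w',Jw')$ differ by a rotation of $L$, which extends by the identity on $L^\perp$ to an element of $\SO(n)$. Hence $\det(u_1,\dots,u_{n-2},w,Jw)$ is independent of $w$, and $u_n=\pm Ju_{n-1}$ with a single sign for the whole face. Minkowski's relation for the polygon $G$ then kills each inner sum.
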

	\begin{proof}
		By \autoref{theorem:Equiv1homValuation}, there exists an odd function $f\in C(\SO(n),\C^n)^{\SO(n)}$ such that
		\begin{align*}
			\varphi(P)=\sum_{u\in \calU^{n-1}_n(P)}f(u)\vol_1(F(P,u)).
		\end{align*}
		Since $\C^n$ is an irreducible representation of $\SO(n)$ for $n\ge 3$, the Canonical Decomposition in \autoref{corollary:canonicalDecompTensor} implies
		\begin{align*}
			f\in C(\SO(n),\C^n)^{\SO(n)}\cong [(E\otimes E^*)\otimes \C^n]^{\SO(n)},
		\end{align*}
		where $\SO(n)$ operates trivially on the second factor, $E\cong \C^n$, and $E\otimes E^*$ is spanned by the coordinate functions $x_{ij}:\SO(n)\rightarrow\R$. In other words, $f$ is a linear combination of $x_{ij}\otimes v$ for $1\le i,j\le n$ and $v\in \C^n$. However, since $n\ge 3$, no nontrivial linear combination of these elements defines an odd function. Thus $f=0$ and therefore $\varphi$ vanishes identically.
	\end{proof}

	\begin{lemma}\label{lemma:Steiner2d}
		Let $\varphi:\calP^2\rightarrow \C^2$ be an $\SO(2)$-equivariant, translation invariant and measurable valuation homogeneous of degree $1$. Then $\varphi=0$ 
	\end{lemma}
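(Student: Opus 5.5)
The plan is to reduce to the simple case and then apply \autoref{theorem:Equiv1homValuation}. For $n=2$ the oddness constraint is automatically satisfied by equivariant functions, so we cannot argue as in \autoref{lemma:triviallitySimpleVectorValued}. Instead, we will compute the representation explicitly and use the Minkowski relation for the edges of a polygon.

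\emph{Step 1: $\varphi$ is simple.} By \autoref{proposition:homDecompMeasuBounded}, $\varphi$ is real $1$-homogeneous. For a point $\{x\}$, translation invariance and homogeneity give
\begin{align*}
\varphi(\{x\})=\varphi(\{0\})=\varphi(t\{0\})=t\varphi(\{0\})
\end{align*}
for all $t\ge0$, so $\varphi$ vanishes on points. For a segment $[0,v]$ we have $[0,-v]=[0,v]-v$, so translation invariance gives $\varphi([0,-v])=\varphi([0,v])$. On the other hand, $[0,-v]=g[0,v]$ for the rotation $g$ by $\pi$. Since $g$ acts on $\C^2$ as $-\mathrm{id}$, equivariance gives $\varphi([0,-v])=-\varphi([0,v])$. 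Hence $\varphi$ vanishes on all segments, and therefore on all lower dimensional polytopes, i.e. $\varphi$ is simple.

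\emph{Step 2: explicit form of $f$.} By \autoref{theorem:Equiv1homValuation} (which applies with $E=\C^2$, viewed as a real representation), there is a continuous odd $f:\calU^1_2=S^1\rightarrow\C^2$ with $f(gu)=gf(u)$ such that
\begin{align*}
\varphi(P)=\sum_{u\in\calU^1_2(P)}f(u)\vol_1(F(P,u)).
\end{align*}
Let $g_u\in\SO(2)$ be the rotation with $g_ue_1=u$ and set $w:=f(e_1)=ae_1+be_2$ with $a,b\in\C$. Then
\begin{align*}
f(u)=g_uw=au+bu^\perp,
\end{align*}
where $u^\perp$ denotes $u$ rotated by $\pi/2$. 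Oddness holds automatically here, so the argument used for $n\ge3$ gives no information.

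\emph{Step 3: vanishing.} For a two-dimensional polygon $P$, the $P$-tight $u$ are exactly its edge normals. Thus
\begin{align*}
\varphi(P)=a\sum_{u\in N(P)}\vol_1(F(P,u))\,u+b\left(\sum_{u\in N(P)}\vol_1(F(P,u))\,u\right)^\perp .
\end{align*}
The sum $\sum_{u\in N(P)}\vol_1(F(P,u))\,u$ vanishes by the classical closing relation for polygons: $\vol_1(F(P,u))\,u$ is the edge vector rotated by $-\pi/2$, and the edge vectors of a closed polygon sum to zero. Hence $\varphi(P)=0$ for all two-dimensional $P$. Together with simplicity from Step 1, this gives $\varphi=0$.

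The only delicate point is Step 1. The lemma does not assume simplicity, so we must show that the values on segments vanish, and this is where the rotation by $\pi$ acting as $-1$ on the standard representation is essential. The remaining steps are direct computations.
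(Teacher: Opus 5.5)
Your proposal is correct and follows essentially the same route as the paper: you show simplicity using that the rotation by $\pi$ acts as $-\mathrm{Id}$ while $-I$ is a translate of $I$, then apply \autoref{theorem:Equiv1homValuation}, and conclude from the closing relation $\sum_{u}\vol_1(F(P,u))\,u=0$ (the paper phrases this as the surface area measure being centered). The only difference is that you get $f(u)=au+bu^\perp$ directly from $f(gu)=gf(u)$, while the paper reaches the same conclusion, that $f$ is linear in $u$, through the Canonical Decomposition; your direct computation is slightly more elementary.
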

	\begin{proof}
		We claim that $\varphi$ is simple. Since it is $1$-homogeneous, $\varphi$ vanishes on points. If $I\in\calP^2$ is an interval, then $-I$ is a translate of $I$. Consequently, since $-Id\in \SO(2)$,
		\begin{align*}
			\varphi(I)=\varphi(-I)=-\varphi(I),
		\end{align*}
		which implies $\varphi(I)=0$. Thus $\varphi$ is simple and therefore satisfies the assumptions of \autoref{theorem:Equiv1homValuation}. We obtain an odd function $f\in C(\SO(2),\C^2)^{\SO(2)}$ such that
		\begin{align}
			\label{eq:1dimCaseSteiner}
			\varphi(P)=\sum_{u\in \calU^1_2(P)} f(u)\vol_1(F(P,u))=\int_{S^1} f(u)dS_1(P,u),
		\end{align}
		where we identify $\SO(2)\cong S^1$ using the first column, and where $S_1(P)$ denotes the surface area measure of $P$, compare \cite{SchneiderConvexbodiesBrunn2014}*{Chapter~4}. From the canonical decomposition, we obtain that 
		\begin{align*}
			C(\SO(2),\C^2)^{\SO(2)}\cong [\mathcal{H}_1\otimes \C^2]^{\SO(n)}
		\end{align*}
		where $\mathcal{H}_1$ denotes the space of harmonic polynomials of degree $1$ on $\SO(2)\cong S^1$. In other words, $f$ is contained in the space spanned by $x_1\otimes v$ and $x_2\otimes v$, $v\in \C^2$, where $x_1,x_2:\SO(2)\cong S^1\rightarrow\R$ denote the coordinate functions corresponding to the first column. Since the surface area measure $S_1(P)$ is centered, the integral in \eqref{eq:1dimCaseSteiner} vanishes, so $\varphi$ vanishes identically.
		
	\end{proof}

	\begin{proposition}\label{proposition:triviallityVectorValued}
		Let $n\ge 2$ and $\varphi:\calP^n\rightarrow \R^n$ be an $\SO(n)$-equivariant, translation invariant, and measurable valuation homogeneous of degree $1$. Then $\varphi=0$.
	\end{proposition}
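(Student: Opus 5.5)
We argue by induction on $n\ge 2$. We complexify the target so that we may work with $\C^n$ and apply the lemmas above; this is harmless, since $\varphi$ vanishes if and only if its complexification does. The case $n=2$ is exactly \autoref{lemma:Steiner2d}. So let $n\ge 3$ and assume the claim holds in dimension $n-1$. The plan is to show that $\varphi$ is simple and then apply \autoref{lemma:triviallitySimpleVectorValued}.

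Since $\varphi$ is $1$-homogeneous, it vanishes on points. Let $H\subset\R^n$ be a hyperplane through the origin, say $H=e_n^\perp$, and consider the restriction $\psi:=\varphi|_{\calP(H)}$. For $g$ in the stabilizer $\SO(n-1)$ of $e_n$, equivariance gives $\psi(gP)=g\psi(P)$. We decompose $\psi=\psi_H+\psi_{e_n}$ according to $\R^n=H\oplus\R e_n$.

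The component $\psi_H:\calP(H)\rightarrow H\cong\R^{n-1}$ is a measurable, $\SO(n-1)$-equivariant, translation invariant, $1$-homogeneous valuation. By the induction hypothesis it vanishes.

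The component $\psi_{e_n}$ is a real-valued, $\SO(n-1)$-invariant, measurable, translation invariant, $1$-homogeneous valuation on $\calP(H)$. By \autoref{maintheorem:HadwigerPolytopes} in dimension $n-1$, together with homogeneity, we have $\psi_{e_n}=cV_1$ for some $c\in\R$. To show $c=0$, we use a rotation $g\in\SO(n)$ that maps $H$ to itself and reverses $e_n$. For example, take $g=\mathrm{diag}(-1,1,\dots,1,-1)$, which has determinant $1$ for $n\ge 2$. Let $I$ be a segment in $H$ along $e_2$; since $n\ge3$, $e_2\ne e_n$ exists, and $gI=I$. Then the $e_n$-component of $\varphi(gI)=g\varphi(I)$ gives $cV_1(I)=-cV_1(I)$, so $c=0$. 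Hence $\varphi$ vanishes on $\calP(H)$. By translation and rotation equivariance it vanishes on every lower dimensional polytope, so $\varphi$ is simple.

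\autoref{lemma:triviallitySimpleVectorValued} now gives $\varphi=0$. The main point needing care is the sign argument for the $e_n$-component, and checking that the restricted valuations inherit measurability and equivariance. The latter is clear because $\calP(H)\subset\calP^n$ carries the subspace topology and the stabilizer of $e_n$ acts on $H$ as $\SO(n-1)$.
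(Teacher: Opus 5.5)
Your proof is correct and follows the paper's argument. You restrict to a hyperplane, kill the tangential component by induction, identify the normal component as $cV_1$ via \autoref{maintheorem:HadwigerPolytopes}, and annihilate it with a rotation that preserves $H$ and reverses its normal. Simplicity then gives the conclusion via \autoref{lemma:triviallitySimpleVectorValued}. The only cosmetic difference is that you show $c=0$ by testing on a single segment fixed by $g$, whereas the paper uses the $\mathrm{O}(H)$-invariance of $V_1$ to get $\varphi(P)=-\varphi(P)$ for every $P\in\calP(H)$ directly.
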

	\begin{proof}
		We will argue by induction on $n\ge 2$, where the case $n=2$ is covered by \autoref{lemma:Steiner2d}. Let $n\ge 3$ and assume that the claim holds in all dimensions up to $n-1$. Assume that $\varphi$ is a valuation with these properties. We will show that $\varphi$ is simple.\\
		Let $H\subset \R^n$ be a linear subspace of dimension $2\le \dim H\le n-1$. If we restrict $\varphi$ to $\calP(H)$ and project onto $H$, we obtain a map $\tilde{\varphi}:\calP(H)\rightarrow H$ that is an $\SO(H)$-equivariant, translation invariant and measurable valuation homogeneous of degree $1$. The induction assumption thus implies $\tilde{\varphi}=0$. Thus $\varphi(P)\in H^\perp$ for every $P\in \calP(H)$.\\
		Since $\SO(H)\subset \SO(n)$ acts trivially on $H^\perp$, $\varphi|_H:\calP(H)\rightarrow H^\perp$ is an $\SO(H)$ invariant valuation. By \autoref{maintheorem:HadwigerPolytopes}, the components of $\varphi|_H$ (with respect to any basis of $H^\perp$) are thus multiples of the mean width $V_1$ and therefore in particular $\mathrm{O}(H)$-invariant. Thus $\varphi|_H$ is $\mathrm{O}(H)$-invariant. If we choose $g\in \mathrm{O}(H)$ such that $\tilde{g}:=\begin{pmatrix}
			g & 0\\
			0 & -Id_{H^\perp}
		\end{pmatrix}\in \SO(n)$, then we obtain for every $P\in\calP(H)$ the relation
		\begin{align*}
			\varphi(P)=\varphi(gP)=\varphi(\tilde{g}P)=\tilde{g}\varphi(P)=-\varphi(P).
		\end{align*}
		Thus $\varphi(P)=0$, and since $\varphi$ is translation invariant, this shows that $\varphi$ is a simple valuation. Now \autoref{lemma:triviallitySimpleVectorValued} shows that $\varphi$ vanishes identically.
	\end{proof}

	\begin{proof}[Proof of \autoref{maintheorem:SteinerPoint}]
		Let $\varphi:\calP^n\rightarrow \R^n$ be measurable, translation and $\SO(n)$-equivariant, and Minkowski additive. If $s:\calP^n\rightarrow\R^n$ denotes the Steiner point map, then 
		\begin{align*}
			\tilde{\varphi}(P):=\varphi(P)-s(P)
		\end{align*}
		is an $\SO(n)$-equivariant, translation invariant, and measurable valuation that is homogeneous of degree $1$. By \autoref{proposition:triviallityVectorValued}, $\tilde{\varphi}$ vanishes identically, which shows $\varphi=s$.
	\end{proof}

\bibliographystyle{plain}
\bibliography{../../library/library.bib}

\Addresses
\end{document}